\documentclass[twoside,12pt,reqno]{amsart}
\usepackage{bbm, tikz, tikz-cd}
\usetikzlibrary{positioning, arrows}
\usepackage{amssymb}
\usepackage{fullpage}
\usepackage[margin=2cm]{geometry}
\usepackage{bm}
\usepackage{extarrows}

\usepackage{graphicx}
\input{xy}
\xyoption{all}
\xyoption{curve}
\usepackage{color,soul}

\makeatletter

\newtheorem{Theorem}{Theorem}[section]
\newtheorem*{Theorem*}{Theorem}
\newtheorem{Lemma}[Theorem]{Lemma}
\newtheorem{Proposition}[Theorem]{Proposition}
\newtheorem{Corollary}[Theorem]{Corollary}

\theoremstyle{remark}
\newtheorem{Remark}[Theorem]{Remark}
\newtheorem{Example}[Theorem]{Example}

\numberwithin{equation}{section}

\usepackage[pdftex,colorlinks,backref,pagebackref,hypertexnames=false, linkcolor=blue,urlcolor=blue,citecolor=blue]{hyperref}

\newcommand{\arxiv}[1]{{\tt arXiv:#1}}

\let\oldu\u

\def\bar{\overline}

\def\iso{\cong}

\def\onto{\twoheadrightarrow}
\def\isoto{\overset{\sim}{\longrightarrow}}
\DeclareRobustCommand\longtwoheadrightarrow
{\relbar\joinrel\twoheadrightarrow}

\def\epsilon{\varepsilon}

\def\k{{\mathbbm k}}

\newcommand{\Tab}{\operatorname{Tab}}

\def\lmod{\!\operatorname{-mod}}
\def\Lie{\operatorname{Lie}}

\def\cc{{\operatorname{cc}}}

\def\ab{{\operatorname{ab}}}

\def\ad{\operatorname{ad}}

\def\Ann{\operatorname{Ann}}
\def\Ker{\operatorname{Ker}}
\def\col{\operatorname{col}}
\def\gr{\operatorname{gr}}
\def\Mat{\operatorname{Mat}}

\def\Der{\operatorname{Der}}
\def\sspan{\operatorname{span}}
\def\Spec{\operatorname{Spec}}

\def\Quant{\operatorname{Quant}}
\def\Id{\operatorname{Id}}
\def\Ind{\operatorname{Ind}}

\def\End{\operatorname{End}}

\def\Hom{\operatorname{Hom}}
\def\codim{\operatorname{codim}}
\def\Fract{\operatorname{Frac}}
\def\Aut{\operatorname{Aut}}

\def\im{\operatorname{im}}

\def\Prim{\operatorname{Prim}}

\def\Dersh{\D\!\operatorname{er}}

\def\col{\operatorname{col}}
\def\row{\operatorname{row}}
\def\Supp{\operatorname{Supp}}

\def\Irr{\operatorname{Irr}}

\def\bA{{\bm{A}}}

\def\blambda{{\bm{\lambda}}}
\def\bpi{{\bm{\pi}}}

\def\o{\overline}

\def\C{{\mathbb C}}

\def\Z{{\mathbb Z}}
\def\O{{\mathbb O}}
\def\cO{{\overline{\mathbb O}}}

\def\bbA{{\mathbb A}}
\def\bbS{\mathbb{S}}
\def\bbV{\mathbf{V}}

\def\sA{{\mathsf{A}}}
\def\sAa{{\mathsf{A_1}}}
\def\saa{{\mathsf{a_1}}}
\def\sAb{{\mathsf{A_2}}}
\def\sAc{{\mathsf{A_3}}}

\def\sAe{{\mathsf{A_5}}}

\def\sDe{{\mathsf{D_5}}}

\def\sEj{{\mathsf{E_7}}}
\def\sEk{{\mathsf{E_8}}}
\def\sF{{\mathsf{F_4}}}
\def\sG{{\mathsf{G_2}}}

\def\GL{\mathrm{GL}}
\def\SL{\mathrm{SL}}

\def\SO{\mathrm{SO}}
\def\Sp{\mathrm{Sp}}
\def\Tr{\mathrm{Tr}}

\def\b{\mathfrak b}

\def\g{{\mathfrak g}}
\def\gl{\mathfrak{gl}}

\def\l{\mathfrak{l}}

\def\p{\mathfrak p}

\def\t{\mathfrak t}
\def\u{\mathfrak{u}}
\def\v{\mathfrak{v}}

\def\r{\mathfrak{r}}
\def\z{\mathfrak{z}}

\def\b{\mathfrak{b}}

\def\Idl{\mathfrak{Id}}

\def\cA{\mathcal{A}}

\def\U{\mathcal{U}}
\def\E{\mathcal{E}}
\def\B{\mathcal{B}}
\def\D{\mathcal{D}}
\def\Oc{\mathcal{O}}
\def\T{\mathcal{T}}
\def\V{\mathcal{V}}
\newcommand{\A}{\mathcal{A}}
\renewcommand{\S}{\mathcal{S}}
\newcommand{\Nc}{\mathcal{N}}

\renewcommand{\min}{\operatorname{min}}

\title{\boldmath Quantization of nilpotent coadjoint $\GL_N$-orbit closures in positive characteristics}
\author{Filippo Ambrosio, Lewis Topley and Matthew Westaway}
\address{Institut f\"ur Mathematik, Friedrich-Schiller-Universität Jena, Inselplatz 5, 07743 Jena, Germany}
\email{filippo.ambrosio@uni-jena.de}
\address{Department of Mathematical Sciences, University of Bath, Claverton Down, Bath, BA2 7AY}
\email{lt803@bath.ac.uk}
\email{mw2915@bath.ac.uk}

\thanks{2010 {\it Mathematics Subject Classification}: 17B45, 17B50, 17B63.}

\begin{document}
	
	\maketitle

    \begin{abstract}
    Let $G$ be a reductive group over an algebraically closed field of positive characteristic $p$, good for the root system of $G$. The closures of $G$-orbits in the Hilbert nullcone of the coadjoint representation are conical affine Poisson varieties, generically of full rank, known as {\em nilpotent coadjoint orbits}. In this paper, we classify the filtered Hamiltonian quantizations of these orbit closures for $G = \GL_N$ and any $p > 0$. Our main new technique is a construction of quantizations from certain primitive quotients of the enveloping algebra, inducing them from the stabiliser in $G$ of the Frobenius twisted $p$-character.
    \end{abstract}

 \section{Introduction}

 \subsection{Setting of the paper}
 \label{ss:statement}

 Let $\k$ be an algebraically closed field of positive characteristic $p>0$ and let $G$ be a connected reductive algebraic group such that $(G,G)$ is simply connected, $p$ is good for the root system, and $\g = \Lie G$ admits a non-degenerate $G$-invariant symmetric bilinear form. Under these hypotheses, we refer to $G$ as a {\it standard reductive group} (cf. \cite[\textsection 6.3]{JaLA}). 

  The coadjoint $G$-module $\g^*$ admits the structure of a conic Poisson variety with Hamiltonian $G$-action: the ring of regular functions $\k[\g^*]$ admits a connected grading $\k[\g^*] = \bigoplus_{i \ge 0} \k[\g^*]_i$ with $\g = \k[\g^*]_2$ and a natural Poisson structure satisfying $\{\k[\g^*]_i, \k[\g^*]_j\} \subseteq \k[\g^*]_{i+j-2}$.

 Let $\O\subseteq \g^*$ be a coadjoint $G$-orbit, with Zariski closure $\cO$; this closure is an affine Poisson subvariety of $\g^*$ with generically full rank. The coadjoint $G$-action restricts to a Hamiltonian $G$-action on $\O$, for which the restriction map $\mu^*:\k[\g^*]\twoheadrightarrow \k[\cO]$ is a comoment map. In general, the PBW grading does not descend to $\k[\cO]$; it does, however, precisely when $\O$ is contained in the Hilbert nullcone $\Nc(\g^*)$ of $\g^*$, i.e. is a {\em nilpotent} (coadjoint) orbit. 
 
 For the remainder of the introduction, $\cO$ is the closure of a nilpotent orbit, and therefore a conical Poisson variety with Hamiltonian $G$-action. The main result of this paper is a classification of the set $\Quant^G \k[\cO]$ of Hamiltonian quantizations of $\k[\cO]$ (up to isomorphism) when $G=\GL_N$. That is to say, we classify triples $(\cA,\iota,\Phi)$ where $\cA$ is a filtered $G$-algebra, $\iota:\k[\cO]\xrightarrow{\sim} \gr\cA$ is a $G$-equivariant isomorphism of graded Poisson algebras, and $\Phi:U(\g)\to \cA$ is a $G$-equivariant filtered algebra homomorphism such that $\gr\Phi:\gr U(\g)=\k[\g^*]\to\gr\cA$ satisfies $\gr\Phi=\iota\circ\mu^*$ (see Remark~\ref{R:differentquantizations} for a comparison with formal $\hbar$-quantizations).

Our work follows an expectation expressed in \cite[Remark~5.8]{PT20}, which was inspired by results of Losev classifying quantizations of (closures and affinisations of) nilpotent orbits over $\C$, see \cite{LoQNO, LoOM, LMM}. In particular, we note that the set $\Quant^{\GL_N(\C)} \C[\cO]$ can be identified with the space of characters of a Levi subalgebra of $\gl_N(\C)$, up to the action of a certain finite group (see, for example, \cite[(7.9)]{LMM}).

\subsection{Statement of the results}
\label{ss:statementofresults}

Fix $N > 0$ and let $G = \GL_N$ and $\g = \gl_N$. Set $W \subseteq \GL_N$ to be the group of permutation matrices. These act on $\g$, stabilising the set $\t = \Lie(T)$ of diagonal matrices, and this identifies $W$ with the Weyl group of $(\g, \t)$. 

Fix a partition $\blambda \vdash N$, and let $G_0$ be the Levi subgroup $\GL_{i_1}\times\cdots\times\GL_{i_r}$ of $G$, where $i_j$ denotes the number of parts of $\blambda$ greater than $j-1$. Writing $\g_0=\Lie(G_0)$, the {\em relative Weyl group} of $\g_0$ is $W(\g_0) := N_G(\g_0)/G_0$, a product of symmetric groups permuting the blocks of $\g_0$ of the same size. The composed homomorphism $N_W(\g_0) \to N_G(\g_0) \to W(\g_0)$ is a split surjection, which canonically realises $W(\g_0)$ as a subgroup of $W$. In Section~\ref{S:generallinearalgebra} we realise $W(\g_0)$ combinatorially using a pyramid, and we refer to it as the group of {\it column-swaps}.

Write $\z^*$ for the subspace of $\g_0^*$ consisting of Lie algebra homomorphisms. Note that $\z^*$ is a complete set of non-isomorphic 1-dimensional $\g_0$-modules. By choosing a suitable $\rho\in\t^*$ (see Subsection~\ref{ss:subalgebrasassociated}), the dot-action $w\bullet \zeta=w(\zeta+\rho)-\rho$ of $W$ on $\t^*$ restricts to a dot-action of $W(\g_0)$ on $\z^*$.

The following is the main result of the paper (see Theorems~\ref{T:MainThm} and \ref{C:quantshavecharacters}). Recall that there is a bijection between nilpotent coadjoint $\GL_N$-orbits and partitions of $N$, and write $\O_\blambda$ for the nilpotent orbit corresponding to $\blambda\vdash N$.

\begin{Theorem*}
\begin{enumerate}
    \item There is a natural bijection
    \begin{eqnarray}
    \label{eq:maintheorem}
        \z^* / W(\g_0)_\bullet \xrightarrow{\sim} \Quant^G \k[\cO_\blambda].
    \end{eqnarray}
    \item Quantizations admit central characters and \eqref{eq:maintheorem} respects central characters, in the following sense: if $\A$ is a Hamiltonian quantization of $\k[\o\O_\blambda]$ then the map $\k[\t^*]^{W_\bullet} = U(\g)^G \to \A$ factors through a character of $\k[\t^*]^{W_\bullet}$, giving a map $\Quant^G \k[\o\O_\blambda] \to \t^* / W_\bullet$. The following diagram commutes:
    \begin{eqnarray}
    \label{eq:commdiagCC}
\begin{array}{c}
\begin{tikzpicture}[node distance=1.5cm, auto]
 \node (A) {$\z^*/W(\g_0)_\bullet$};
 \node (B) [right of= A] {};
 \node (C) [right of = B] {$\Quant^G \k[\o\O_\blambda]$};
 \node (D) [below of= C] {$\t^*/W_\bullet$};

 \draw[->] (A) to node {$ $} (C);
 \draw[->] (A) to node {$ $} (D);
 \draw[->] (C) to node {$ $} (D);
\end{tikzpicture}
\end{array}
    \end{eqnarray}
        
\end{enumerate}
\end{Theorem*}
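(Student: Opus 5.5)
We plan to construct the map \eqref{eq:maintheorem} via parabolic induction, show it is well defined and surjective, and then prove injectivity using central characters.

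\textbf{Construction.} Fix the parabolic $\p = \g_0 \oplus \u$ with Levi factor $\g_0$, chosen so that the Richardson orbit of $\p$ is $\O_\blambda$. This holds because $\g_0$ has block sizes $i_1,\dots,i_r$ given by the transpose partition, so $\O_\blambda = \Ind_{\g_0}^{\g}(0)$. For $\zeta \in \z^*$, let $\k_\zeta$ be the corresponding one-dimensional $\p$-module and form the generalised Verma module $M(\zeta) = U(\g)\otimes_{U(\p)}\k_{\zeta}$. Set
\[
\cA_\zeta := U(\g)/\Ann_{U(\g)} M(\zeta),
\]
with the filtration induced from the PBW filtration and $\Phi$ the quotient map. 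Equivalently, we can realise $\cA_\zeta$ as global twisted differential operators $\D^{\zeta}(G/P)$ restricted to the image of $U(\g)$, or as a $\zeta$-twisted version of $\k[T^*(G/P)]$.

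We must show that $\gr\cA_\zeta \cong \k[\cO_\blambda]$ as graded Poisson algebras. Associated graded of the annihilator gives a surjection from the image of $\k[\g^*]$ in $\gr \D^\zeta(G/P) = \k[T^*(G/P)]$ onto $\gr\cA_\zeta$. For $\GL_N$, the moment map $T^*(G/P)\to\cO_\blambda$ is birational and $\cO_\blambda$ is normal even in good characteristic $p$ (Donkin), which gives $\k[\cO_\blambda] = \k[T^*(G/P)]$. A comparison of dimensions, for instance via $p$-characters or Gelfand--Kirillov dimension, then forces equality. In positive characteristic $\gr$ of twisted differential operators is delicate, so here we would use the paper's technique: work with the $p$-centre, decompose according to the Frobenius-twisted $p$-character $\chi$, and induce from the stabiliser $G_\chi$. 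This reduces the problem to a product of smaller $\gl$'s where the statement is known.

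\textbf{Surjectivity.} Given a quantization $(\cA,\iota,\Phi)$, $\Phi$ is surjective because $\mu^*$ is. So $\cA = U(\g)/I$ where $\gr I = I(\cO_\blambda)$, and $I$ is a primitive-type ideal whose associated variety is $\cO_\blambda$. The first step is to show that $I\cap U(\g)^G$ has codimension one, which gives the central characters in part (2). The argument uses $\gr(U(\g)^G\to\cA)$: the image of $\k[\g^*]^G_{+}$ in $\k[\cO_\blambda]$ is zero because invariants of positive degree vanish on the nullcone. Hence the image of $U(\g)^G$ is finite dimensional and commutative. It is also a domain, being a subalgebra of $\cA$, whose associated graded is a domain; so it is $\k$. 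Next we identify $I$ with some $\Ann M(\zeta)$. This is the step we expect to be the main obstacle, since it is the characteristic-$p$ analogue of Losev's classification. We would try the following. Analyse $\cA$ over the $p$-centre $Z_p(\g)$: its image in $\cA$ is $\k[\cO_\blambda^{(1)}]$-like and is a Frobenius twist of $\gr$. Localise at a regular $p$-character $\chi \in \O_\blambda^{(1)}$. Using Premet--Topley-style Morita equivalences with restricted $W$-algebras, or induction from $G_\chi$, the fibre is a matrix algebra over a one-dimensional piece, which pins down a character of the Levi. $G$-equivariance then spreads this over all of $\cO_\blambda$.

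\textbf{Injectivity and part (2).} The central character of $\cA_\zeta$ is the $W_\bullet$-orbit of $\zeta$ viewed in $\t^*$ (up to the $\rho$-shift of $\p$), which is exactly the commuting triangle \eqref{eq:commdiagCC}. Suppose $\cA_\zeta \cong \cA_{\zeta'}$. Then $\zeta' \in W\bullet\zeta\cap\z^*$, and we must show that it lies in $W(\g_0)\bullet\zeta$. This is a combinatorial check using the pyramid description of Section~\ref{S:generallinearalgebra}. Elements of $W$ that carry $\z^*$ into itself, with its block-constant structure, modulo those fixing $\zeta$, are column swaps. Where they are not, for special $\zeta$, we compare the annihilators directly by a translation-functor argument. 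Conversely, $W(\g_0)_\bullet$-conjugate characters give isomorphic quantizations, because $M(w\bullet\zeta)$ for column swaps $w$ has the same annihilator, by the analogue of Borho--Jantzen's $\tau$-invariance.
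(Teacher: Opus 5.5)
You have a genuine gap at each stage, and the first one is fatal.

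\textbf{The construction.} In characteristic $p$, the annihilator of a generalised Verma module is the wrong ideal. For $x\in\u$ we have $x^{[p]}\in\u$, so the central element $x^p-x^{[p]}$ kills $1\otimes 1$ and hence lies in $\Ann_{U(\g)}M(\zeta)$. Its symbol $x^p$ does not vanish on $\cO_\blambda$ unless $\O_\blambda=0$. Already for $\gl_2$, with $\p$ the upper triangular Borel, $e_{12}^p\in\Ann M(\zeta)$, while $e_{12}$ is nonzero at $\kappa(e_{21})\in\Nc(\g^*)$. In fact $\Ann M(\zeta)\cap Z_p(\g)$ cuts out $(\eta+\p^\perp)^{(1)}$ for some $\eta\in\g^*$, where $\p^\perp$ is the annihilator of $\p$. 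This affine space is not $G$-stable. Consequently:
\begin{itemize}
\item $\Ann M(\zeta)$ is not $G$-stable, and $\gr\Ann M(\zeta)$ is not the defining ideal of $\cO_\blambda$;
\item it cannot equal the kernel of any $G$-equivariant map such as $U(\g)\to\Gamma(G/P,\D^\zeta)$, so your ``equivalently'' fails;
\item your $\tau$-invariance step concerns the same wrong object.
\end{itemize}
The missing idea is a $G$-stable replacement. The paper takes a primitive ideal $J$ of codimension $p^{\dim\O_\blambda}$ whose $p$-character $\eta$ lies in the Katsylo section $\chi+X$. It then passes to the largest $G$-stable ideal inside $J$, namely $\Omega(J)=\Ker(U(\g)\to\Ind_{G^{F(\eta)}}^G U(\g)/J)$. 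Proving that $\gr\Omega(J)$ is the ideal of $\cO_\blambda$ is the real work. It uses the Kazhdan filtration, $\gr(U(\g)/J)\cong\k_\chi[\g^*]/P_\chi$, the degeneration $\gr\k[G^{F(\eta)}]=\k[G^{F(\chi)}]$, and $\k[\O_\chi]\cong\Ind_{G^{F(\chi)}}^G(\k_\chi[\g^*]/P_\chi)$. Inducing from $G^{F(\eta)}$ is not a reduction to smaller $\gl$'s.

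\textbf{Surjectivity.} You localise at $\chi\in\O_\blambda$, but $\Supp_p\cA=\o\O^{(1)}$ for some orbit $\O$ in the sheet of $\O_\blambda$, and $\O$ is usually not nilpotent. For $\cO_\blambda=\Nc(\gl_2^*)$, whose quantizations are central reductions of $U(\g)$, all but finitely many have non-nilpotent $p$-support. For those, your localisation at $\chi$ is zero. You need three ingredients:
\begin{itemize}
\item Proposition~\ref{P:psupport};
\item Katsylo's theorem (Theorem~\ref{T:Katsylo}), to single out the unique $\eta\in\O\cap\S_\chi$;
\item Proposition~\ref{P:fibredimension}, showing that $\cA/\Phi(I_\eta)\cA$ has dimension $p^{\dim\O_\blambda}$.
\end{itemize}
Together these give a small module; then \cite{GT} shows it comes from $\z^*$ via the Miura map.

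\textbf{Injectivity.} Central characters cannot work, because $\z^*/W(\g_0)_\bullet\to\t^*/W_\bullet$ is not injective. The identity $W_\bullet\zeta\cap\z^*=W(\g_0)_\bullet\zeta$ fails, and not only for special $\zeta$:
\begin{itemize}
\item For $\blambda=(1,2)$ in $\gl_3$ (box $1$ above box $2$), $W(\g_0)$ is trivial. Yet the column-connected tableaux $(a_1,a_2,a_3)=(x+1,x,x+2)$ and $(x+2,x+1,x)$ give distinct points of $\z^*$ lying in one $W_\bullet$-orbit.
\item For $\O=\{0\}$ with $p\mid N$, every $\zeta\in\z^*=\k\Tr$ has the same central character as $\zeta+\Tr$.
\end{itemize}
So your unspecified translation-functor argument would have to carry all of injectivity. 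The paper instead distinguishes quantizations by a finer invariant: the primitive ideal $\Theta(I)=I+I_\eta U(\g)$ with $p$-character in $\chi+X$. Equivalently this is a one-dimensional $U(\g,e)$-module, which by \cite{GT} records the row-wise symmetric functions of a column-connected tableau. Lemma~\ref{L:rowiffcol} then identifies row equivalence with column-swap equivalence.

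\textbf{What survives.} Your argument that $U(\g)^G$ acts on $\cA$ by a character can be salvaged once you handle lower-order terms. The symbol of a $G$-invariant element of $\cA$ lies in $\k[\cO_\blambda]^G=\k$, so the image of $U(\g)^G$ is $\k$. This is more direct than the paper's deduction from $I=\Omega(J)$. However, commutativity of the triangle has to be verified for the actual bijection, via compatibility of the Miura map with the Harish-Chandra homomorphism, not for $\Ann M(\zeta)$.
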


One particular corollary of our result is that the Hamiltonian quantizations of $\Nc(\gl_N^*)$ are precisely the reductions of the universal enveloping algebra by Harish-Chandra central characters. 

Although we need to consider $G=\GL_N$ to get the full result stated above, aspects of our arguments also work outside of type {\sf A}. One application of this is that we can describe a special class of quantizations associated to the minimal nilpotent orbit outside type {\sf A}. In \cite{Jo} Joseph demonstrated that for a complex simple Lie algebra $\g_\C$ of type {\sf B}--{\sf G}, there is a unique (2-sided) ideal $J \subseteq U(\g_\C)$ such that the vanishing locus of $\gr J$ is the minimal nilpotent orbit closure. It is completely prime and maximal. We prove the following analogous result: if $G$ is a standard reductive $\k$-group, with indecomposable root system of type {\sf B}--{\sf G}, then there is a unique ideal such that $\gr J$ is equal to the vanishing ideal of the minimal nilpotent orbit closure. We call this the {\it Joseph ideal}.

\subsection{Sketch of proof and structure of the paper}
\label{ss:sketchofproof}

Maintain the notation used thus far in the introduction. Fix $\chi\in\O_\blambda$, and let $e\in\g$ correspond to $\chi$ under the $G$-equivariant isomorphism $\g \to \g^*$, $X\mapsto (Y\mapsto \Tr(XY))$.

We here briefly sketch the construction of the bijection \eqref{eq:maintheorem} from the main theorem. It is obtained from the following chain of bijections, which we explain over the remainder of this subsection:
$$\z^*/W(\g_0)_\bullet \xleftrightarrow{\tiny\mbox{Thm.}\,\ref{T:OnedimensionalsviaMiura}}\E(\g,e)\xleftrightarrow{\eqref{eq:1dimsandprimitives2}} \Prim_{\chi+X}^{p^{\dim \O_\blambda}} U(\g)\xleftrightarrow{\tiny\mbox{Props.}\,\ref{P:idltoidl},\ref{P:primtoprim}} \Idl_{\o\O_\blambda}^G U(\g)\xleftrightarrow{\tiny\mbox{Prop.}\, \ref{P:quantizationsvsideals}} \Quant^G \k[\overline{\O}_\blambda].$$

The first step is to show that $\Quant^G\k[\cO_\blambda]$ (defined above, and in more detail in  Section~\ref{ss:gradedHamQuant}) is in bijection with the set $\Idl_{\cO_\blambda}^G U(\g)$ of $G$-stable ideals $I \subseteq U(\g)$ such that (under the PBW filtration) $\gr I$ is the defining ideal of $\cO_\blambda$. This is proved in Proposition~\ref{P:quantizationsvsideals}, with the bijection sending $(\cA,\iota,\Phi)$ to $\ker\Phi$.

Recall now that the Lie algebra $\g$ admits a $G$-equivariant restricted structure and write $U_\chi(\g)$ for the reduced enveloping algebra with $p$-character $\chi$ (see Section~\ref{ss:restrictedLiealgebras}). In \cite{PrKW} Premet proved that the dimensions of $U_\chi(\g)$-modules are divisible by $p^{\frac{1}{2}\dim \O_\blambda}$. Following \cite{GT, PT20} the $U_\chi(\g)$-modules of dimension $p^{\frac{1}{2}\dim \O_\blambda}$ will be called {\it small modules}.
Thanks to the Premet--Skryabin equivalence \cite[Theorem~9.2]{GTmodular}, small $U_\chi(\g)$-modules are in bijection with one-dimensional {\em restricted} representations of the {\em finite $W$-algebra} $U(\g,e)$ associated to $\g$ and $e$ (see e.g. \cite[Lemma 2.5]{PT20}). This is an associative algebra which quantizes $\S_\chi:=\chi+\v$, a good transverse slice to $\O_\chi$ at $\chi$ (when working over the complex numbers the Slodowy slice is a common choice of good transverse slice).

The variety of all one-dimensional representations of $U(\g,e)$ is denoted $\E(\g,e)$ and is in bijection with the set of $p^{\frac{1}{2}\dim \O_\blambda}$-dimensional $U_\eta(\g)$-modules for $\eta\in\S_\chi$ (see Subsection~\ref{ss:finWalg}). In fact, modules of such dimension only arise when $\eta$ lies in certain subset of $\S_\chi$ denoted here by $\chi+X$ (this is the {\em Katsylo section} of the unique sheet containing $\O_\blambda$, see Subsection~\ref{ss:sheets}). When these modules do arise they must be simple, and thus (since all simple $U(\g)$-modules are finite-dimensional) they correspond bijectively to a subset of the primitive ideals of $U(\g)$, which we write as $\Prim_{\chi + X}^{p^{\dim \O_\blambda}} U(\g)$. Thus, $\Prim_{\chi + X}^{p^{\dim \O_\blambda}} U(\g)$ is in bijection with $\E(\g,e)$.

Since we assume $G=\GL_N$, there is a natural homomorphism $U(\g,e)\to U(\g_0)$ called the Miura map which induces a map from $\z^*$ (the one-dimensional $U(\g_0)$-modules) to $\E(\g,e)$ (the one-dimensional $U(\g,e)$-modules). In Theorem~\ref{T:OnedimensionalsviaMiura}, we show using the results of \cite{GT} that this map is a quotient mapping for the dot-action of $W(\g_0)$ on $\z^*$, and thus that there is an isomorphism $\z^*/W(\g_0)_\bullet\isoto \E(\g,e)$.

What remains is therefore to show a bijection $\Prim_{\chi+X}^{p^{\dim \O_\blambda}}U(\g)\simeq \Idl_{\o\O_\blambda}^G U(\g)$, and this takes up the majority of Section~\ref{s:Quants}. In Subsection~\ref{ss:quantisationtoprimitiveideals} we construct a map $\Idl_{\o\O_\blambda}^G U(\g)\to \Prim_{\chi+X}^{p^{\dim \O_\blambda}}U(\g)$, in Subsection~\ref{ss:primidlstoquants} we construct a map $\Prim_{\chi+X}^{p^{\dim \O_\blambda}}U(\g) \to \Idl_{\o\O_\blambda}^G U(\g)$, and in Subsection~\ref{ss:classquants} we show that these maps are inverse to each other. To conclude this subsection, we outline the constructions of these two maps; for this, we develop new methods specific to positive characteristics.

We begin with the map $\Idl_{\o\O_\blambda}^G U(\g)\to \Prim_{\chi+X}^{p^{\dim \O_\blambda}}U(\g)$. Recall that $U(\g)$ has a large central subalgebra $Z_p(\g)$, called the {\em $p$-centre} of $U(\g)$, which is isomorphic to $\k[\g^*]^p$. For any $I\in\Idl_{\o\O_\blambda}^G U(\g)$, the {\em $p$-support} of $U(\g)/I$ is the support of $U(\g)/I$ as a $Z_p(\g)$-module, so a subvariety of $\g^*$. In Proposition~\ref{P:psupport} we see that this $p$-support is always the closure of a coadjoint orbit with the same dimension as $\O_\blambda$ (in fact, lying in the same sheet as $\O_\blambda$). Intersecting this orbit with $\S_\chi$ yields a unique element $\eta\in\chi+X$; to see this, we need a new proof of Katsylo's theorem \cite{Kat} which works for $\GL_N$ in positive characteristic, see Theorem~\ref{T:Katsylo}. The ideal of $U(\g)$ generated by $I$ and the maximal ideal of $Z_p(\g)$ corresponding to $\eta$ turns out to lie in $\Prim_{\chi+X}^{p^{\dim \O_\blambda}}U(\g)$, by dimensional considerations (Lemma~\ref{L:IdltoPrim}). This procedure thus gives the required map. 

Let us turn now to the map $\Prim_{\chi+X}^{p^{\dim \O_\blambda}}U(\g)\to \Idl_{\o\O_\blambda}^G U(\g)$. By \cite[Proposition 3.4(1)]{PSk} there exists a $p^{\dim\O_\blambda}$-dimensional Poisson simple algebra $\k_\chi[\g^*]/P_\chi$ such that $\gr(U(\g)/I)\isoto\k_\chi[\g^*]/P_\chi$ for any $I\in \Prim_{\chi+X}^{p^{\dim \O_\blambda}}U(\g)$, so long as we equip $U(\g)/I$ with the Kazhdan filtration corresponding to $\chi$ (see Section~\ref{ss:asscochar} for the definition, and Remark~\ref{R:changinggradings} for further orientation). We reinterpret the coordinate ring $\k[\cO_\blambda]$ using $\k_\chi[\g^*]/P_\chi$, as follows. 

Due to a result of Donkin \cite{Do} $\k[\cO_\blambda]=\k[\O_\blambda]$, and we may identify $\O_\blambda$ with $G/G^\chi$ where $G^\chi$ is the stabiliser of $\chi\in\g^*$ under the coadjoint action. Writing $F:\g^*\to (\g^*)^{(1)}$ for the Frobenius morphism on $\g^*$, we may similarly identify the Frobenius twist $\O^{(1)}$ with $G/G^{F(\chi)}$. Here $G^{F(\chi)}$ is the (non-reduced) scheme-theoretic stabiliser of $F(\chi)$ under the Frobenius-twisted action of $G$ on $(\g^*)^{(1)}$. This group scheme acts on the Poisson algebra $\k_\chi[\g^*]/P_\chi$. We show in Section~\ref{ss:coordringdesc} that $\k[\O_\blambda]$ is isomorphic (as a graded Poisson algebra, with respect to the Kazhdan grading) to the induced algebra $\Ind_{G^{F(\chi)}}^G (\k_\chi[\g^*]/P_\chi)$.

In Section~\ref{ss:primidlstoquants}, we show that the algebra $\cA_I:=\Ind_{G^{F(\chi)}}^G (U(\g)/I)$ can be made into a Hamiltonian quantization $(\cA_I,\iota_I,\Phi_I)$ of $\k[\cO_\blambda]$ for any $I\in \Prim_{\chi+X}^{p^{\dim \O_\blambda}}U(\g)$ (initially with respect to the Kazhdan grading, but through a twisting argument from Subsection~\ref{ss:gradedHamQuant} we may also do so with respect to the PBW grading). Taking $\ker(\Phi_I)$ yields an ideal in $\Idl_{\o\O_\blambda}^G U(\g)$, and this gives us our desired map.

\subsection{Related works}

As noted above, the relationship between quantizations of orbit closures and 1-dimensional representations of the finite $W$-algebra in positive characteristics was suggested by Premet and the second author in \cite[Remark~5.8]{PT20}, and our work builds on some of those ideas. The remarks of {\it loc. cit.} were heavily inspired by Losev: in \cite{LoQNO} the quantizations of equivariant covers of nilpotent orbits over $\C$ were classified. This is closely related to, but distinct from, the problem of classifying quantizations of orbit closures, addressed in this paper. Losev's results show that the quantizations of an orbit cover are classified by those 1-dimensional representations of the finite $W$-algebra which are invariant under the component group of the centraliser of an element in orbit cover. For the general linear group, quantizations are classified by the complex analogue of the variety $\E(\g,e)$ described in Section~\ref{ss:sketchofproof}.

In \cite{LoOM}, Losev classified the quantizations of an arbitrary (complex) conic symplectic singularity, showing that they are parameterised by the quotient of the Namikawa--Cartan space by the Namikawa--Weyl group. Nilpotent $\GL_N(\C)$-orbit closures are examples of such singularities and his parameterising set is analogous to ours.

The study of quantizations of symplectic varieties over fields of positive characteristic was initiated in \cite{BeKa}. When such a variety $X$ is equipped with a {\em restricted structure}, Bezrukavnikov-Kaledin construct a map from the {\em Frobenius constant} quantizations of $X$ to a certain \'{e}tale cohomology group. Here, Frobenius constant quantizations are essentially those for which the Poisson centre of $\Oc_X$ lifts to the centre of the quantization. If the variety $X$ is {\em admissible}, meaning it satisfies a certain cohomology vanishing condition, this map becomes an isomorphism and thus yields a classification of Frobenius constant quantizations. Bezrukavnikov-Kaledin's work was later continued by Bogdanova and Vologodsky \cite{BV} (subsequently joined by Kubrak and Travkin \cite{BKTV}), who correct an inaccuracy in \cite{BeKa} and show a Morita equivalence between a Frobenius constant quantization of a symplectic variety $X$ with restricted structure and a certain central reduction of an algebra of differential operators. Frobenius constant quantizations have since also been studied in \cite{Mu}.

It would be interesting to understand in greater detail the relationship between the work of Bezrukavnikov-Kaledin and Bogdanova-Kubrak-Travkin-Vologodsky, and the results proved in this paper. The Frobenius constant quantizations studied by those authors appear related to Hamiltonian quantizations considered here, specifically those which have nilpotent $p$-support, though it is not clear to us whether the orbits we consider are admissible in the sense of \cite{BeKa}.

\subsection{Layout of the paper}

After this introduction, the paper begins with some general preliminaries (Section~\ref{ss:Prelims}),  followed by discussion of the basic definitions and results from the theories of quantizations (Section~\ref{s:prelimquants}) and finite $W$-algebras (Section~\ref{S:Walgebrassmallmodules}) which are needed for the paper. Section~\ref{s:centralisers} then proves some technical results about coadjoint centralisers and stabilisers which are needed later, ending with a proof of Katsylo's Theorem in positive characteristic for $\GL_N$. Consideration of $G=\GL_N$ continues in Section~\ref{S:generallinearalgebra}, where we show (following \cite{GT}) that in this case one-dimensional representations of the finite $W$-algebras can be classified as quotients of vector spaces by finite groups. Section~\ref{s:Quants} contains the main new results of the paper. After introducing the notion of the $p$-support, we construct bijections between the sets $\Idl_{\o\O_\blambda}^G U(\g)$ and $\Prim_{\chi+X}^{p^{\dim \O_\blambda}}U(\g)$, which, together with the earlier results, yield the main theorem. We conclude the paper in Section~\ref{s:Joseph} by extending a result of Joseph about quantizations of minimal nilpotent orbit closures outside of type $\sA$.

\subsection*{Acknowledgements} The authors would like to thank Ivan Losev, Alexander Premet and Vadim Vologodsky for useful email correspondence, and Simon Riche for some helpful discussions, at the final stages of this project.

The first author's research was carried out at Friedrich-Schiller-Universität Jena. The second and third authors are very grateful for the support provided by UKRI FLF grant numbers MR/S032657/1, MR/S032657/2, MR/S032657/3, as we as UKRI FLF extension grant number MR/Z000394/1.

  \tableofcontents
 
 \section{Preliminaries}\label{ss:Prelims}
 
Throughout, we let $\k = \overline{\k}$ denote an algebraically closed field of characteristic $p > 0$. All unadorned tensor products are taken over $\k$, and all vector spaces, algebras and schemes are defined over $\k$. For a $\k$-algebra $A$, we denote by $A\lmod$ the category of finitely generated $A$-modules.      Furthermore,  all schemes are assumed to be separated and of finite type over $\k$. If $X$ is a scheme, then $\Oc_X$ denotes the structure sheaf and $\T_X = \Dersh \Oc_X$ the tangent sheaf. A rational action of a group scheme is denoted $G \curvearrowright X$.

 \subsection{Filtered vector spaces}
 \label{ss:filteredgradedspaces}
We will use calligraphic fonts to denote filtered vector spaces and Roman fonts for graded vector spaces. Since there can be no confusion we use subscripts to denote both the filtered and graded pieces: $\V = \bigcup_{i} \V_i$ and $V = \bigoplus_i V_i$. All filtrations and gradings are defined over $\Z$. A filtered (resp. graded) linear map $\U\to \V$ (resp. $U\to V$) is required to send $\U_i$ to $\V_i$ (resp. $U_i$ to $V_i$).

A filtration on a vector space $\V$ is always assumed to be ascending, i.e. $\V_i \subseteq \V_{i+1}$ for all $i$. Our filtrations are also assumed to be exhaustive ($\V = \bigcup_i \V_i$) and separated ($\bigcap_i \V_i = 0$), although we do not assume that our filtrations are discrete (i.e. that  $\V_i = 0$ for $i \ll 0$).

The associated graded vector space of $\V$ is $V = \gr \V = \bigoplus_{i\in \Z} V_i$ where $V_i := \V_i / \V_{i-1}$. Since the filtrations are separated, every $v\in \V$ satisfies $v \in \V_d \setminus \V_{d-1}$ for some $d = d(v)$ called the {\it degree of $v$}. Then the {\it symbol} of $v$ is $\bar v := v + \V_{d(v)-1} \in \gr \V$. We say that $\V$ has a {\it filtered basis} if there exists a basis of $\V$ such that the symbols form a basis of $\gr \V$. If the filtration on $\V$ is discrete then an easy inductive argument shows that $\V$ admits a filtered basis.

If $U, V$ are graded vector spaces then the tensor product is graded with $(U\otimes V)_k := \bigoplus_{i + j = k} U_i \otimes V_j$. Similarly if $\U, \V$ are filtered vector spaces then $\U \otimes \V$ is filtered with $(\U\otimes \V)_k := \sum_{i+j = k} \U_i \otimes \V_j$. We define a graded linear map by the rule
\begin{eqnarray}
\label{eq:gradedandtensors}
\begin{array}{ccl}
\gr(\U) \otimes \gr(\V) & \longrightarrow & \gr(\U\otimes \V)\\
(u_i + \U_{i-1}) \otimes (v_j + \V_{j-1}) & \longmapsto &  u_i\otimes v_j + (\U \otimes \V)_{i+j-1}
\end{array}
\end{eqnarray}
In general this map need not be an isomorphism, but it is an isomorphism whenever $\U$ or $\V$ admit filtered bases \cite[Lemma~I.8.2]{NVO}.

Suppose that $V$ is a graded vector space and $\lambda : \k^\times \to \Aut V$ is a rational action preserving the grading. Then we define $V_{i,j}^\lambda := \{ v\in V_i \mid \lambda(t) v = t^jv \text{ for all } t\in \k^\times\}$, and define the {\it $\lambda$-twisted grading} by
\begin{equation}
V^\lambda = \bigoplus_{k \in \Z} V^\lambda_k,\qquad\mbox{where }\,\,\, V^\lambda_k := \bigoplus_{i+j=k} V^\lambda_{i,j}.
\end{equation}
Let $U$ and $V$ be graded vectors spaces equipped with graded $\k^\times$-actions, and abuse notation by letting $\lambda$ denote the $\k^\times$-action on both $U$ and $V$. Then a $\k^\times$-equivariant linear map $U \to V$ is graded with respect to the given gradings if and only if it is graded with respect to the $\lambda$-twisted gradings.

Similarly if $\lambda : \k^\times \to \Aut \V$ is a rational filtered action on a filtered vector space $\V$, then we can define the {\it $\lambda$-twisted filtration} by 
\begin{eqnarray}
\label{eq:twistedfiltration}
\V_k^\lambda := \bigoplus_{i+j=k} \{ v\in \V_i \mid \lambda(t) v = t^jv \text{ for all } t\in \k^\times\}.
\end{eqnarray}
Similar to the graded case, a linear map is filtered $\U \to \V$ if and only if the same map is filtered with respect to the $\lambda$-twisted filtration.

We occasionally write $V = V^\lambda$ (resp. $\V = \V^\lambda$) to emphasise that we are considering the $\lambda$-twisted grading (resp. $\lambda$-twisted filtration).

\begin{Lemma}
\label{L:twistingfiltrations}
$\gr(\V^\lambda) = \gr(\V)^\lambda$ as graded vector spaces. If $V$ is an algebra and $\lambda: \k^\times \to \Aut(V)$ acts by algebra automorphisms then $V^\lambda = \bigoplus_k V^\lambda_k$ is an algebra grading. $\hfill \qed$ 
\end{Lemma}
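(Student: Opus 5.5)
The plan is to use the weight decomposition under the rational $\k^\times$-action. Since $\lambda$ acts rationally and preserves the filtration, every $\V_i$ is a rational $\k^\times$-module. So $\V_i = \bigoplus_j \V_{i,j}$ with $\V_{i,j} := \{v \in \V_i \mid \lambda(t)v = t^j v\}$, and these decompositions are compatible with the inclusions $\V_{i-1} \subseteq \V_i$, in the sense that $\V_{i-1,j} = \V_{i-1}\cap \V_{i,j}$. Note that the sum in \eqref{eq:twistedfiltration} is direct because distinct weights are independent, and $\V^\lambda_k = \bigoplus_j \V_{k-j,j}$.

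For the first claim, I will first check that $\V^\lambda$ is a filtration (ascending, exhaustive, separated) and then compute its graded pieces. Because projection to a weight space is exact on rational modules, $\gr(\V)_i$ inherits the decomposition $\gr(\V)_{i,j} = \V_{i,j}/\V_{i-1,j}$. Hence $(\gr\V)^\lambda_k = \bigoplus_j \V_{k-j,j}/\V_{k-j-1,j}$. On the other side,
$$\V^\lambda_k/\V^\lambda_{k-1} = \Big(\bigoplus_j \V_{k-j,j}\Big)\Big/\Big(\bigoplus_j \V_{k-1-j,j}\Big) = \bigoplus_j \V_{k-j,j}/\V_{k-1-j,j},$$
where the quotient can be taken weight by weight because both subspaces are $\k^\times$-stable. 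The two sides agree canonically, and the identification is compatible with symbols, which gives $\gr(\V^\lambda)=\gr(\V)^\lambda$.

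For exhaustiveness, given $v\in \V_i$ I write $v=\sum_j v_j$ as a finite sum of weight components. Each $v_j$ lies in $\V_{i+j-j, j}\subseteq \V^\lambda_{i+j}$, so $v$ lies in $\V^\lambda_{i+\max j}$. Separatedness follows weight by weight: a vector in every $\V^\lambda_k$ has each of its weight-$j$ components in $\V_{k-j}$ for all $k$, and therefore it is $0$.

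For the algebra statement, take $a \in V_{i,j}$ and $b\in V_{i',j'}$. Then $ab\in V_{i+i'}$ because $V$ is a graded algebra, and $\lambda(t)(ab)=t^{j+j'}ab$ because $\lambda$ acts by algebra automorphisms. So $ab \in V^\lambda_{i+i',j+j'}\subseteq V^\lambda_{k+k'}$, where $k=i+j$ and $k'=i'+j'$. Since $V=\bigoplus_k V^\lambda_k$ by the weight decomposition, this is an algebra grading.

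The only point needing care is the exactness and compatibility of the weight decompositions with the subspaces $\V_{i-1}\subseteq\V_i$. This follows from complete reducibility of rational $\k^\times$-modules, i.e. the diagonalizability of the torus action, and there is no real obstacle beyond it.
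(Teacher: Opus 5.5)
Your proof is correct. The paper gives no proof of this lemma (it is marked as routine), and your argument is exactly the standard verification it leaves to the reader: decompose $\V_i=\bigoplus_j \V_{i,j}$ compatibly with $\V_{i-1}\subseteq\V_i$ (a $\lambda(\k^\times)$-stable subspace of a rational module is the sum of its weight components), identify $\V^\lambda_k/\V^\lambda_{k-1}=\bigoplus_j \V_{k-j,j}/\V_{k-j-1,j}$, and use that weights add under algebra automorphisms.
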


\subsection{Frobenius twists}

Given a vector space $V$, we denote by $V^{(1)}$ the vector space with the same underlying abelian group as $V$ but with scalar multiplication given by $(\lambda,v)\mapsto \lambda^{1/p} v$. This gives a monoidal endofunctor on the category of vector spaces, and so if $A$ is a $\k$-algebra then $A^{(1)}$ is also a $\k$-algebra.

For a scheme $X$ with regular functions $\Oc_X$, the {\em Frobenius twist} of $X$ is the scheme $X^{(1)}$ which is equal to $X$ as a set, but with sheaf of functions $\Oc_X^{(1)}$.
When $X$ is reduced the map $\Oc_X^{(1)}\to \Oc_X^p$, $f\mapsto f^p$, is an isomorphism of $\k$-sheaves on $X$, and so we can regard $X^{(1)}$ as the scheme $(X, \Oc_X^p)$ for simplicity. 
This is especially transparent for reduced affine schemes, where $X^{(1)}$ is isomorphic to $\Spec (\k[X]^p)$. 
The morphism $F_X: X \to X^{(1)}$ corresponding to $\Oc_X^{(1)}\cong\Oc_X^p\hookrightarrow\Oc_X$ is called the {\em Frobenius morphism} for $X$.

If $G$ is a group scheme then $F_G : G \to G^{(1)}$ is a homomorphism of group schemes, and the kernel $G_1 := \Ker F_G$ is called the {\it (first) Frobenius kernel} of $G$. The latter is an infinitesimal group scheme: it has a single $\k$-point but a nontrivial sheaf of sections with the structure of a (finite-dimensional) Hopf algebra.

\subsection{Group schemes and coadjoint orbits}
\label{ss:groupschemesandorbits}

	Let $G$ be a reductive algebraic group over $\k$, and let $\g=\Lie(G)$. We assume that $G$ satisfies the standard hypotheses from \cite[\textsection 6.3]{JaLA}, that is (A) that the derived subgroup of $G$ is simply connected, (B) that $p$ is good for $G$ and (C) that there exists a $G$-invariant non-degenerate symmetric bilinear form on $\g$. The latter implies in particular that there exists an isomorphism of $G$-modules
	\begin{eqnarray}
    \label{eq:kappa}
	\kappa:\g\xrightarrow{\sim}\g^{*}.
	\end{eqnarray}
    We refer to a group under these hypotheses as a {\it standard reductive group}.
    
		Since $G$ acts on $\g^{*}$, we may study the orbits of this action; through the isomorphism $\kappa$ this is equivalent to studying the orbits of $G$ on $\g$ under the adjoint action. Given $\chi\in\g^{*}$, the $G$-orbit $\O_\chi:=G\cdot\chi$ is a smooth quasi-affine subvariety of $\g^{*}$, and we denote by $\cO_\chi$ its (Zariski) closure in $\g^{*}$. Such orbits are all even-dimensional \cite[\textsection 2.8]{JaNO}. 
		
		We write $G^\chi$ for the coadjoint stabiliser, which will always be taken scheme-theoretically. For standard reductive groups, these stabilisers are smooth (i.e. reduced) group schemes \cite[\textsection 2.9]{JaNO} and so 
	\begin{eqnarray}
	\label{L:smoothstabilisers}
	\g^\chi = \Lie G^\chi \text{ for all } \chi \in \g^*.
	\end{eqnarray}
	Equivalently \cite[\textsection 2.2]{JaNO} there is an isomorphism of varieties 
\begin{eqnarray}
\label{eq:quotientisorbit}
G/G^\chi\xrightarrow{\sim}\O_\chi \text{ for all } \chi \in \g^*.
\end{eqnarray}	
Identifying through this isomorphism we have a sequence of natural inclusions
\begin{eqnarray}
\label{eq:naturalinclusions}
    \k[\cO_\chi]\subseteq \k[\O_\chi]= \k[G/G^\chi] = \k[G]^{G_\chi} \subseteq \k[G].
\end{eqnarray}
By \cite[Corollary 8.3]{JaNO} the first inclusion is an equality if and only if $\cO_\chi$ is normal.
\begin{Lemma}
\cite{Do}
\label{L:normalorbits}
For $G = \GL_n$, coadjoint orbit closures are normal, and so
$\k[\O_\chi] =  \k[\overline{\O}_\chi]$ for each such orbit.
\end{Lemma}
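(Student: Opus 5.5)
This lemma is Donkin's normality theorem \cite{Do} together with the criterion recalled just above it. My plan is to deduce the second assertion from the first, and to give a short outline of the first.

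\emph{Reduction to normality.} By \eqref{eq:quotientisorbit} and \eqref{eq:naturalinclusions}, $\O_\chi \cong G/G^\chi$ is an open, dense, smooth subvariety of the affine variety $\cO_\chi$. The inclusion $\k[\cO_\chi]\subseteq\k[\O_\chi]$ is restriction of functions. By \cite[Corollary 8.3]{JaNO} this inclusion is an equality exactly when $\cO_\chi$ is normal. The underlying reason is the following. By \cite[\textsection 2.8]{JaNO}, for $\GL_n$ (and in fact for any standard reductive group) the complement $\cO_\chi\setminus\O_\chi$ is a union of finitely many orbits of strictly smaller even dimension, so it has codimension at least $2$. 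For a normal variety, regular functions extend across closed subsets of codimension at least $2$ (Hartogs' property).

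\emph{Normality.} First I reduce to nilpotent orbits. Write $\chi=\chi_s+\chi_n$ for the Jordan decomposition. The closure $\cO_\chi$ is $G\times^{P}(\chi_s+\overline{\O'}+\mathfrak{n})$ after a parabolic induction argument, and locally it is a product of the orbit $G\cdot\chi_s$ with the nilpotent orbit closure of $\chi_n$ in the Levi subalgebra $\g^{\chi_s}$. Since $\g^{\chi_s}$ is a product of $\gl_{m}$'s, normality for nilpotent orbit closures in general linear algebras is enough. For nilpotent orbits of $\GL_n$ I would follow the Kraft--Procesi strategy adapted by Donkin:
\begin{enumerate}
\item Realise $\overline{\O}_\blambda$ as the categorical quotient of a representation space $\mathrm{Rep}$ of a type $\sA$ quiver, taken by a product of general linear groups.
\item Show that the relevant fibre of this quiver representation space is a normal complete intersection (Serre's criterion, as Kraft--Procesi do).
\item Show that invariants pass to the quotient, which preserves normality.
\end{enumerate}

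\emph{Main obstacle.} The hard step is the third one in positive characteristic. Taking invariants is not exact, so surjectivity of the map from the coordinate ring of $\mathrm{Rep}$ to $\k[\overline{\O}_\blambda]$ on invariants requires good filtrations. This is Donkin's contribution: the coordinate rings involved admit good filtrations, and the first fundamental theorem for $\GL$ holds in any characteristic. Since the paper simply cites \cite{Do} for this, I would invoke it rather than reprove it, and then conclude the lemma through the criterion of \cite[Corollary 8.3]{JaNO}.
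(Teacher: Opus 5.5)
Your proposal is correct and follows the paper's route: the paper likewise cites Donkin \cite{Do} for normality of conjugacy class closures in $\gl_n$, and deduces $\k[\O_\chi]=\k[\overline{\O}_\chi]$ from \cite[Corollary 8.3]{JaNO}, whose content is exactly your codimension-$2$ plus Hartogs argument. Your additional sketch of Donkin's method (the Kraft--Procesi quiver construction with good filtrations) and of the reduction to nilpotent orbits goes beyond what the paper spells out and is not needed, since you rightly end up invoking \cite{Do} for that step.
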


\subsection{The twisted coadjoint representation}
\label{ss:twistedcoadjoint}

Retain the assumptions of Section ~\ref{ss:groupschemesandorbits}. Then $G$ acts on $(\g^*)^{(1)}$ by the composition
$$G \overset{F_G}{\longrightarrow} G^{(1)} \longrightarrow \GL((\g^*)^{(1)}).$$ The Frobenius morphism $F = F_{\g^*} : \g^* \to (\g^*)^{(1)}$ is a $G$-equivariant bijective morphism of schemes, however the inverse is not a morphism. Consequently, $F$ induces a bijection on $G$-orbit spaces, but does not respect stabilisers. If $\chi \in \g^*$ then we write $G^{F(\chi)}$ for the stabiliser of $F(\chi)$ in $G$, and we note that $G^\chi \subsetneq G^{F(\chi)}$ in general. Directly from the definitions we see $G_1, G^\chi \subseteq G^{F(\chi)}$ as closed subgroup schemes.

By definition, $G^{F(\chi)} = F_G^{-1}\big((G^\chi)^{(1)}\big)$, which implies
\begin{eqnarray}
\label{eq:frobstabiliserfibreproduct}
    G^{F(\chi)} \cong G \times_{G^{(1)}} (G^\chi)^{(1)}. 
\end{eqnarray}
In the language of \cite{BMR}, $G^{F(\chi)}$ is a Frobenius neighborhood of $G^\chi$ inside $G$. On the level of coordinate rings, \eqref{eq:frobstabiliserfibreproduct} becomes 
\begin{eqnarray}
\label{eq:coordfrobstabiliserfibreproduct}
    \k[G^{F(\chi)}] \cong \k[G] \otimes_{\k[G]^p} \k[G^\chi]^p. 
\end{eqnarray}

Identifying $G^{F(\chi)}$ with the fibre product through \eqref{eq:frobstabiliserfibreproduct}, the subgroup $G^\chi\subseteq G^{F(\chi)}$ identifies with
\begin{eqnarray}
    G^\chi = G^\chi \times_{G^{(1)}} (G^\chi)^{(1)}.
\end{eqnarray}
Since $G \times_{G^{(1)}} (G^\chi)^{(1)}$ is generated by $G^\chi \times_{G^{(1)}} (G^\chi)^{(1)}$ and $G_1 = G \times_{G^{(1)}} \{1_{(G^\chi)^{(1)}}\}$, it follows that $G^{F(\chi)}$ is generated by the subschemes $G_1$ and $G^\chi$.

By \cite[I.5.6(2)]{JanRAGS} there is an isomorphism 
\begin{eqnarray}
    G / G^{F(\chi)} \isoto \O_{\chi}^{(1)}.
\end{eqnarray}
	
\subsection{Restricted Lie algebras} \label{ss:restrictedLiealgebras}
The PBW grading $\k[\g^*] = \bigoplus_{i \ge 0} \k[\g^*]_i$ is defined by placing $\g$ in degree 2. We adopt a similar convention for the PBW filtration on the enveloping algebra $U(\g)$, placing $\g$ in degree 2. By the PBW theorem we can (and shall) identify $\gr U(\g) = \k[\g^*]$ as graded algebras.

The Lie algebra $\g$ admits a canonical choice of $G$-invariant restricted structure $x \mapsto x^{[p]}$, taking $\delta\in\Der \k[G]$ to $\delta^p\in\Der \k[G]$ (see \cite[\textsection 2]{SF80} for a general reference on restricted structures). This gives rise to the notion of semisimple and nilpotent elements of $\g$. The {\em nilpotent cone} is the reduced scheme
$$\Nc(\g) = \{x \in \g \mid x^{[p]^{\dim \g}} = 0\}.$$
For $G = \GL_N$ the closed points of $\Nc(\g)$ are those which act nilpotently on $\k^N$. 
We write $\Nc(\g^*) = \kappa (\Nc(\g) ) \subseteq \g^*$, and note that $\Nc(\g^*)$ coincides with the Hilbert nullcone of the coadjoint representation, i.e. the set of $\chi \in \g^*$ such that $0 \in \overline{G\cdot \chi}$ \cite{PrKR}.

The map $\xi:\g\to U(\g)$ given by $x\mapsto x^p - x^{[p]}$ is $p$-semilinear and takes values in the centre $Z(\g)$ of $U(\g)$. The image of this map generates a subalgebra $Z_p(\g) \subseteq U(\g)$, called the {\it $p$-centre} of $U(\g)$ (also known as the {\it Frobenius centre}).

The PBW theorem implies two important properties of $Z_p(\g)$: (i) $U(\g)$ is a free $Z_p(\g)$-module of rank $p^{\dim \g}$; and (ii) $\xi$ induces a $G$-equivariant isomorphism $\k[(\g^*)^{(1)}] = S(\g)^{(1)} \to Z_p(\g)$. Throughout this paper we make the identification $(\g^*)^{(1)} = \Spec Z_p(\g)$. This parameterisation of maximal ideals can be made explicit: for $\chi \in \g^*$ we have the maximal ideal 
\begin{eqnarray}
    \label{eq:idealIchi}
    I_\chi := (x^p - x^{[p]} -\chi(x)^p \mid x\in \g) \unlhd Z_p(\g).
\end{eqnarray}

Every simple $\g$-module admits a $p$-central character, and the $\g$-modules with $p$-central character $\chi$ are precisely the $U_\chi(\g)$-modules, where  $U_\chi(\g) := U(\g) / U(\g) I_\chi$ is the {\em $\chi$-reduced enveloping algebra}. These algebras form a flat family over $(\g^*)^{(1)}$, with fibres $U_\chi(\g)$ all of dimension $p^{\dim \g}$.

The reduced enveloping algebras admit semiclassical analogues: $\k[\g^*]$ is a Poisson algebra and $\k[\g^*]^p = \k[(\g^*)^{(1)}]$ is contained in the Poisson centre. The {\it $\chi$-reduction of} $\k[\g^*]$ is $\k_\chi[\g^*] := \k[\g^*] / J_\chi$, where
\begin{eqnarray}
\label{eq:idealJchi}
J_\chi = (x^p - \chi(x)^p \mid x\in \g)\unlhd \k[\g^*]^p.    
\end{eqnarray}
The adjoint action of $\g$ on $\k[\g^*]$ descends to an action on $\k_\chi[\g^*]$. We remark that these algebras appeared in various works of Premet (see \cite{PSk} and references therein), who referred to them as reduced symmetric algebras and denoted them $S_\chi(\g)$.

It is not hard to see that $\k_\chi[\g^*]$ admits a unique maximal $\g$-stable ideal $P_\chi$, which coincides with the sum of all proper $\g$-stable ideals (this is proper since $\k_\chi[\g^*]$ has a unique maximal ideal). The Hamiltonian derivations of $\k[\g^*]$ coincide with $\k[\g^*] \ad(\g)$ and so it follows that an ideal of $\k_\chi[\g^*]$ is $\g$-stable if and only if it is a Poisson ideal. It follows that $\k_\chi[\g^*]$ admits a unique maximal Poisson ideal. Combining \cite[Proposition~3.4]{PSk} and \eqref{L:smoothstabilisers}, we deduce the ideal has codimension $p^{\dim \O_\chi}$. We record these facts in the form of a lemma.

\begin{Lemma}
\label{L:PremetSkryabin}
$\k_\chi[\g^*]$ admits a unique maximal Poisson ideal $P_\chi$, which has codimension $p^{\dim \O_\chi}$.
\end{Lemma}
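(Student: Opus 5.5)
The argument has two parts: existence and uniqueness of $P_\chi$, which is elementary, and the codimension, which is quoted from \cite[Proposition~3.4]{PSk} together with \eqref{L:smoothstabilisers}.

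\emph{Existence and uniqueness.} First, $\k_\chi[\g^*]$ is a local ring. It is finite-dimensional of dimension $p^{\dim\g}$, being a free module of that rank over $\k[\g^*]^p/J_\chi \cong \k$. Its nilradical is generated by the images of the elements $x - \chi(x)$ for $x \in \g$, since $(x-\chi(x))^p = x^p - \chi(x)^p \in J_\chi$. The quotient by this nilradical is $\k$, so the ring is local with unique maximal ideal $\mathfrak m_\chi$.

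Next, the Poisson ideals of $\k_\chi[\g^*]$ are exactly its $\g$-stable ideals. Indeed, $\k[\g^*]^p$ is Poisson central, so the bracket descends to $\k_\chi[\g^*]$. The Hamiltonian derivations of $\k_\chi[\g^*]$ are the $\k_\chi[\g^*]$-linear combinations of the operators $\ad(x)$, $x\in\g$, because $\{f,-\} = \sum_i \partial_i f\, \{x_i,-\}$ for a basis $x_1,\dots,x_n$ of $\g$. An ideal is stable under such combinations if and only if it is stable under each $\ad(x)$.

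Let $P_\chi$ be the sum of all proper $\g$-stable ideals. Each proper ideal lies in $\mathfrak m_\chi$, because the ring is local, so this sum lies in $\mathfrak m_\chi$. Hence $P_\chi$ is proper. It is $\g$-stable, so it is the unique maximal Poisson ideal.

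\emph{Codimension.} By \cite[Proposition~3.4(1)]{PSk}, the quotient $\k_\chi[\g^*]/P_\chi$ has dimension $p^{\dim\g - \dim\g^\chi}$. Here $\g^\chi$ is the Lie algebra centraliser of $\chi$, and Premet--Skryabin identify this quotient with the coordinate algebra of the Frobenius neighbourhood of $\chi$ in its orbit. By \eqref{L:smoothstabilisers}, $\g^\chi = \Lie G^\chi$, so
\[
\dim\g^\chi = \dim G^\chi = \dim G - \dim \O_\chi ,
\]
the last equality coming from \eqref{eq:quotientisorbit}. This gives $\dim \g - \dim\g^\chi = \dim\O_\chi$, and therefore $P_\chi$ has codimension $p^{\dim\O_\chi}$.

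The only step that needs care is the identification of $\dim\g^\chi$ with $\dim\O_\chi$. This is exactly where smoothness of the stabiliser is used: without \eqref{L:smoothstabilisers}, the Lie algebra centraliser $\g^\chi$ could be larger than $\Lie$ of the reduced stabiliser, and the codimension would then come out smaller than $p^{\dim\O_\chi}$.
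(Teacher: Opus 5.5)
Your proposal is correct and follows the paper's own argument. The paper likewise takes $P_\chi$ to be the sum of all proper $\g$-stable ideals (proper because $\k_\chi[\g^*]$ is local), identifies Poisson ideals with $\g$-stable ideals because Hamiltonian derivations lie in $\k[\g^*]\ad(\g)$, and gets the codimension by combining \cite[Proposition~3.4]{PSk} with the smoothness of stabilisers \eqref{L:smoothstabilisers}; you have simply supplied the routine details the paper leaves to the reader, namely locality via the nilpotent elements $x-\chi(x)$, the Leibniz-rule expansion of $\{f,-\}$, and the count $\dim\g-\dim\g^\chi=\dim\O_\chi$.
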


In \cite{PrKW} Premet proved the second Kac--Weisfeiler conjecture, showing that
\begin{eqnarray}
\label{eq:Premetstheorem}
p^{\dim \O_\chi} \text{ divides } \dim(V)^2 \text{ for } V \in U_\chi(\g)\lmod.
\end{eqnarray}
We say that $V\in U_\chi(\g)\lmod$ is a {\it small module} if $\dim(V)^2 = p^{\dim \O_\chi}$.

The result \eqref{eq:Premetstheorem} was strengthened by the proof of a conjecture by Humphreys in \cite{PT20}. The authors showed that $U_\chi(\g)$ has a simple module of dimension $p^{\frac{1}{2}\dim \O_\chi}$ and so the dimension bound implied by \eqref{eq:Premetstheorem} is sharp. We record a version of this observation formally, for later use.
\begin{Lemma}
\label{L:KW+Humphreys}
For $d \ge 0$, the following are equivalent:
\begin{enumerate}
    \item $U_\chi(\g)$ admits a representation of dimension $p^d$;
    \item $\frac{1}{2}\dim \O_\chi \le d$.
\end{enumerate}
\end{Lemma}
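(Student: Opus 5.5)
We prove the two implications separately; both reduce to facts already recorded.

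For (1)$\Rightarrow$(2), suppose $V$ is a $U_\chi(\g)$-module with $\dim V = p^d$. Then $V \neq 0$, so \eqref{eq:Premetstheorem} gives that $p^{\dim \O_\chi}$ divides $p^{2d}$. Since both are powers of the prime $p$, this forces $\dim \O_\chi \le 2d$, that is, $\frac{1}{2}\dim\O_\chi \le d$. One implicit convention should be stated: representations are nonzero. Otherwise the zero module would make (1) vacuous, and in any case $p^d\ge 1$ excludes it. Recall also that $\dim \O_\chi$ is even, so $\frac12\dim\O_\chi$ is an integer.

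For (2)$\Rightarrow$(1), set $s := \frac{1}{2}\dim \O_\chi \le d$. By the theorem of \cite{PT20} recalled above, $U_\chi(\g)$ has a simple module $S$ with $\dim S = p^{s}$. Then the direct sum $S^{\oplus p^{d-s}}$ is a finitely generated $U_\chi(\g)$-module of dimension $p^{s}\cdot p^{d-s} = p^d$.

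Neither step involves a genuine obstacle; the content lies entirely in Premet's divisibility theorem and in the Humphreys conjecture established in \cite{PT20}. The only point needing care is that the lemma concerns all (not necessarily simple) representations, which is what allows the direct-sum construction in the second implication.
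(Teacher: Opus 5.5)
Your proof is correct and is the paper's argument: the paper states this lemma without a separate proof, as the combination of Premet's divisibility theorem \eqref{eq:Premetstheorem}, which gives (1)$\Rightarrow$(2), and the existence from \cite{PT20} of a simple $U_\chi(\g)$-module of dimension $p^{\frac12\dim\O_\chi}$, which gives (2)$\Rightarrow$(1) once you take direct sums. You are also right that ``representation'' must mean a not necessarily simple module; with ``simple'' in place of it, (2)$\Rightarrow$(1) would fail for large $d$.
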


\section{Quantizations}\label{s:prelimquants}
In this section, we recall the notion of quantizations and Hamiltionian quantizations we use throughout the paper. In this and subsequent sections, our schemes will always be quasi-affine, i.e. they  embed into their affinization $X \hookrightarrow \Spec(\Gamma(X,\Oc_X))$. 

\subsection{Poisson schemes}

We say that $X$ is a {\em Poisson scheme} if $\Oc_X$ is equipped with a Lie bracket
$$\{ \cdot , \cdot \} : \Oc_X \times \Oc_X \longrightarrow \Oc_X$$
which is a biderivation.

If $X$ is affine, then specifying a rational action $\k^\times \curvearrowright X$ is equivalent to specifying an action $\k^\times \curvearrowright \k[X]= \Gamma(X,\Oc_X)$ by automorphisms, which is equivalent to specifying a grading $\k[X] = \bigoplus_{i\in \Z} \k[X]_i$. For $d > 0$ we say that the Poisson bracket {\it lies in degree} $-d$ if $\{\k[X]_i, \k[X]_j\} \subseteq \k[X]_{i+j-d}.$

The {\it Hamiltonian derivation} associated with a local section $f\in \Oc_X(U)$ is $\{f, -\} \in \T_X(U)$. Under our standing assumptions $X$ is regular precisely when it is a smooth affine variety. In this case, the Poisson structure on $X$ is determined by a global section $\pi \in \Gamma(X, \bigwedge^2 \T_X)$ and the Hamiltonian derivations form a $\Oc_X$-submodule of $\T_X$. We say that $X$ is {\it symplectic} if every section of $\T_X$ is Hamiltonian.

\begin{Example}
\label{ex:symplecticaffinespace}
Let $X = \bbA^{2n}$ and equip $\k[X] = \k[p_{i}, q_i \mid i=1,\ldots,n]$ with a Poisson bracket via $\{p_i, q_j\} = \delta_{i,j}$, $\{p_i, p_j\} = \{q_i, q_j\} = 0$. This Poisson structure is the symplectic affine space, and has Poisson bracket in degree $-2$.
\end{Example}

\begin{Example}
\label{ex:PoissonLiestructure}
If $\g$ is a finite-dimensional Lie algebra then $\g^*$ is a Poisson $\k^\times$-variety with bracket in degree $-2$. Here the $\k^\times$-action is restricted from the vector space structure on $\g^*$.
\end{Example}

\subsection{Graded Hamiltonian actions}
\label{ss:gradedHamactions}

Let $X$ be an affine Poisson scheme, and let $G$ be a standard reductive group acting rationally on $X$ by Poisson automorphisms. Write $\alpha : G \to \Aut\k[X]$ for the action induced by $G\curvearrowright X$. We say that the action is {\it Hamiltonian} if there exists a $G$-equivariant {\it comoment map} $\mu^* : \k[\g^*] \to \k[X]$ such that for $x\in \g$ we have $d_e\alpha(x) = \{\mu^*(x), - \} \in \Der \k[X]$.

Equip $\k[\g^*]$ with the (doubled) PBW grading $\bigoplus_{i\in \Z} \k[\g^*]_i$. We fix a homomorphism $\lambda : \k^\times \to G$ and recall the $\lambda$-twisted PBW grading $\k[\g^*]^\lambda = \bigoplus_{i\in \Z} \k[\g^*]^\lambda_i$ (cf. Subsection~\ref{ss:filteredgradedspaces}; if $\lambda$ is trivial we omit the superscript). Explicitly, $\k[\g^*]_i^\lambda$ is spanned by $\lambda(t)$-eigenvectors of eigenvalue $t^j$ of degree $d$ such that $d + j = i$. Note that the Poisson bracket lies in degree $-2$.

The action of $G$ on itself by conjugation gives rise to an action $G \curvearrowright \k[G]$, and we equip $\k[G]$ with the $\lambda$-eigenspace grading, explicitly $\k[G]_i^\lambda = \{f\in \k[G] \mid \lambda(t) \cdot f = t^i f \text{ for } t\in \k^\times\}$. The coaction map associated to $G \curvearrowright \k[\g^*]$ is graded
$$\k[\g^*]_i^\lambda \to \bigoplus_{j+k=i} \k[\g^*]_j^\lambda \otimes \k[G]_k^\lambda$$
and we say that the $G$-action on $\k[\g^*]$ respects the grading. We remark that $G$ does not preserve the graded pieces. 

We say that a Hamiltonian action $G \curvearrowright X$ is {\em $\lambda$-graded} if $\k[X]$ is graded with bracket in degree $-2$, the coaction map for $G\curvearrowright\k[X]$ is graded, and the comoment map is graded (with respect to the $\lambda$-twisted grading).

The fundamental example of a $\lambda$-graded Hamiltonian action, for our purposes, is the coadjoint representation $G\curvearrowright \g^*$ where $\g = \Lie(G)$. The comoment map is the identity on $\k[\g^*]$. A closed $G$-stable subvariety $X \subseteq \g^*$ admits a $\lambda$-graded Hamiltonian action if and only if $X$ is $\k^\times$-stable for the $\k^\times$-action coming from the vector space structure on $\g^*$ ($\k^\times$-stable will always be understood with respect to the vector space $\k^\times$-action).

Now let $X\subseteq \g^*$ be a $G$-stable affine subscheme. The defining ideal $I_X \unlhd \k[\g^*]$ is $\ad(\g)$-stable, hence stable under all Hamiltonian derivations. It follows that such a scheme is Poisson, and moreover that the action $G\curvearrowright\k[X]$ is Hamiltonian with comoment map $\k[\g^*] \onto \k[X]$ the natural projection i.e. restriction of regular functions. 

\begin{Lemma}\label{L:gradorbs}
Let $G$ be a standard reductive group, and $\chi \in \g^*$. The Hamiltonian action $G\curvearrowright \overline{\O}_\chi$ is $\lambda$-graded if and only if $\chi \in \Nc(\g^*)$.
\end{Lemma}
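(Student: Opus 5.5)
The plan is to reduce the statement to the criterion recorded just before the lemma: a closed $G$-stable subvariety $X\subseteq\g^*$ carries a $\lambda$-graded Hamiltonian action if and only if $X$ is $\k^\times$-stable for the scalar action on $\g^*$. Applied to $X=\overline{\O}_\chi$, the lemma becomes the claim that $\overline{\O}_\chi$ is stable under scalars exactly when $\chi\in\Nc(\g^*)$. I would first verify that criterion briefly, since it is only stated. If $X$ is conical, then $I_X$ is a graded ideal for the PBW grading. It is also $\lambda(\k^\times)$-stable, because $X$ is $G$-stable, so it is graded for the twisted grading $\k[\g^*]^\lambda$. Hence $\k[X]$ inherits a grading in which the projection $\mu^*$ and the coaction are graded and the bracket lies in degree $-2$. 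Conversely, if $\mu^*=$ restriction is graded, then its kernel $I_X$ is homogeneous for the $\lambda$-twisted grading. Untwisting by the $\lambda(\k^\times)$-eigenspace decomposition, which $I_X$ respects since it is $G$-stable, shows that $I_X$ is PBW-homogeneous, and so $X$ is conical.

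For the direction ``$\chi$ nilpotent $\Rightarrow$ conical'', I would transport everything to $\g$ via $\kappa$ and take $e=\kappa^{-1}(\chi)$ nilpotent. By the Jacobson--Morozov-type results available in good characteristic, namely the existence of an associated cocharacter (cf. \cite{JaNO}, \S5), there is a cocharacter $\tau:\k^\times\to G$ with $\Ad\tau(t)e=t^2e$. Hence $t^2e\in\O_e$ for all $t$. Since $\k$ is algebraically closed, every scalar is a square, so $\k^\times\O_e=\O_e$, and taking closures makes $\overline{\O}_\chi$ stable under $\k^\times$. For $\GL_N$ this can be done by hand: $e$ and $te$ have the same Jordan type. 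In general one can avoid cocharacters altogether: the set of $c\in\k^\times$ with $c\chi\in\O_\chi$ is a closed subgroup of $\k^\times$. Because the orbit of a nilpotent element is stable under finitely many scalings and there are finitely many nilpotent orbits, this subgroup has finite index, and a finite-index closed subgroup of $\k^\times$ is all of $\k^\times$.

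For the converse, suppose $\overline{\O}_\chi$ is $\k^\times$-stable. Then $0\in\overline{\k^\times\chi}\subseteq\overline{\O}_\chi$, because the closure of the punctured line through $\chi$ contains $0$. So $0\in\overline{G\cdot\chi}$, and by the Hilbert nullcone description quoted from \cite{PrKR} in \S\ref{ss:restrictedLiealgebras}, $\chi\in\Nc(\g^*)$.

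The main obstacle I expect is the forward direction in a general standard reductive group, specifically producing scaling-invariance of nilpotent orbits. I would cite the existence of associated cocharacters in good characteristic from \cite{JaNO}. I would also note the finiteness of nilpotent orbits as a fallback, together with the subgroup argument above.
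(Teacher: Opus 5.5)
Your proof is correct. The ``if'' direction matches the paper in substance: it cites \cite[Lemma~2.10]{JaNO}, and your associated-cocharacter argument gives the same content. The ``only if'' direction takes a different route.

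\begin{itemize}
\item \textbf{The paper's route.} It shows $\O_\chi$ is $\k^\times$-stable iff $\chi\in\Nc(\g^*)$, using the adjoint quotient $\pi:\g^*\to\g^*/\!\!/G$. This map is $\k^\times$-equivariant, has unique fixed point $\pi(0)$, and has central fibre $\Nc(\g^*)$. So for non-nilpotent $\chi$, some $z\chi$ lies in a different fibre of $\pi$ from $\chi$.
\item \textbf{Your route.} A conical $\overline{\O}_\chi$ contains the line $\k\chi$, hence $0$. You then conclude via the Hilbert-nullcone description of $\Nc(\g^*)$.
\end{itemize}

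The two cited inputs are equivalent by standard GIT: $\pi^{-1}(\pi(0))$ is exactly the set of $\chi$ with $0\in\overline{G\cdot\chi}$. So neither route is more general. Yours is shorter and avoids analysing fixed points on the quotient. It also works directly with $\overline{\O}_\chi$; the paper phrases its argument for $\O_\chi$ and leaves the passage to the closure implicit (it holds because $\pi$ is constant on $\overline{\O}_\chi$).

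You also verify the criterion ``$\lambda$-graded iff conical'', which the paper only asserts. That is a worthwhile addition.

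One wording point in your fallback argument: ``stable under finitely many scalings'' should say that scalar multiplication permutes the finitely many nilpotent orbits. Hence the stabiliser of $\O_\chi$ in $\k^\times$, a locally closed and therefore closed subgroup, has finite index.
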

\begin{proof}
We show that $\O_\chi$ is $\k^\times$-stable if and only if $\chi \in \Nc(\g^*)$. This will imply the statement of the  current lemma. The `if' implication is \cite[Lemma~2.10]{JaNO}.

The central fibre of the adjoint quotient map $\pi: \g^* \to \g^*/\!\!/ G$ is precisely $\Nc(\g^*)$ \cite[\textsection\textsection 6--7]{JaNO}. Since $\k[\g^*]^G$ is a PBW graded subalgebra of $\k[\g^*]$, there is a natural $\k^\times$-action on $\g^*/\!\!/ G$ for which $\pi$ is $\k^\times$-equivariant, with unique fixed point $\pi(0)$. We denote this action by $z \cdot x$ for $x \in \g^*/\!/G$ and $z\in \k^\times$.

It follows that if $\chi\in \g^* \setminus \pi^{-1}(0)$ then since $\pi(\chi)$ is not $\k^\times$-fixed there exists $z\in \k^\times$ such that $\pi(\chi) \neq z\cdot \pi(\chi) = \pi(z\chi)$. Since $z\chi$ and $\chi$ lie in distinct fibres of $\pi$, they lie in distinct orbits.
\end{proof}

\subsection{Graded Hamiltonian quantizations}
\label{ss:gradedHamQuant}

Let $\A$ be a filtered algebra $\A = \bigcup_{i\in \Z} \A_i$. The associated graded algebra is $\gr \A = \bigoplus_{i\in \Z} \A_i / \A_{i-1}$. We say that $\A$ is {\it almost commutative of degree $-2$} if $[\A_i, \A_j] \subseteq \A_{i + j - 2}$. In this case, $\gr \A$ is a commutative algebra equipped with a Poisson bracket in degree $-2$ via
$$\{ a + \A_{i-1}, b + \A_{j-1}\} = [a,b] + \A_{i+j-3}$$
for $a\in \A_{i}$ and $b\in \A_j$.

If $A$ is a graded Poisson algebra with Poisson bracket in degree $-2$, then a {\it filtered quantization} of $A$ is a pair $(\A, \iota)$ of a filtered algebra $\A$, almost commutative of degree $-2$, and a graded Poisson isomorphism $\iota : A \xrightarrow{\sim} \gr \A$. An isomorphism of quantizations $(\A, \iota)$ and $(\A', \iota')$ is an isomorphism of filtered algebras $\phi : \A \to \A'$ that induces a commuting diagram of Poisson isomorphisms:
\begin{center}
\label{eq:quantisodiagram}
\begin{tikzcd}
 &  A\arrow[dr, "\iota"] \arrow[swap, dl, "\iota'"] & \\
\gr \A \arrow[rr, "\phi"] & & \gr \A'.
\end{tikzcd}
\end{center}

\begin{Remark}
\label{R:differentquantizations}
If $A$ is a graded Poisson algebra with bracket in degree $-2$, there are three common notions of a quantization of $A$, all of which are equivalent.

Instead of filtered quantizations, one may consider flat graded $\k[\hbar]$-algebras $\A_\hbar$ with $\deg(\hbar) = 1$, $[\A_\hbar, \A_\hbar] \subseteq \hbar^2 \A_\hbar$ with a fixed Poisson isomorphism $\iota : \A_\hbar / (\hbar) \to A$. These are in bijection with filtered quantizations via the Rees algebra construction, and specialisation $\A_\hbar \mapsto \A_\hbar / (\hbar - 1)$.

One may also consider a topologically graded $\hbar$-adically complete, flat $\k[[\hbar]]$-algebra $\A_\hbar^\vee$ such that $[\A_\hbar^\vee, \A_\hbar^\vee] \subseteq \hbar^2 \A_\hbar^\vee$ equipped with a Poisson isomorphism $\A_\hbar^\vee / (\hbar) \to A$ as above. Algebras satisfying these properties are in bijection with those of the previous paragraph, via $\hbar$-adic completion. To invert this procedure, one should take the $\k^\times$-locally finite part of $\A_\hbar^\vee$.
\end{Remark}

Suppose now that $X$ is a $\k^\times$-stable affine Poisson variety with bracket in degree $-2$ and $A = \k[X]$. Fix a cocharacter $\lambda : \k^\times \to G$, and suppose that $G \curvearrowright X$ is a $\lambda$-graded Hamiltonian action. Equip $U(\g)$ with the $\lambda$-twisted PBW filtration $U(\g)^\lambda$, which places $x\in \g$ in degree $i + 2$ whenever $\lambda(t) x = t^i x$ for all $t \in \k^\times$. Note that $U(\g)^\lambda$ is almost commutative in degree $-2$. Using the PBW theorem, and Lemma~\ref{L:twistingfiltrations}, we identify $\gr U(\g)^\lambda = \k[\g^*]^\lambda$ as graded Poisson algebras.

We have equipped $\k[G]$ with the grading by $\lambda(\k^\times)$-eigenspaces, and we write $\k[G]_{\le i}^\lambda$ for the pieces of the induced filtration. A quantization $\A$ of $\k[X]$ is called {\it $\lambda$-Hamiltonian} if the following two conditions hold:\vspace{4pt}
\begin{enumerate}
\setlength{\itemsep}{4pt}
\item[(Q1)]  There is a rational action $G\curvearrowright \A$ such that the coaction map is filtered, i.e. 
$$\A_i \to \sum_{j+k = i} \A_j \otimes \k[G]_{\le k}^\lambda,$$
and the corresponding associated graded map is the coaction map for $G \curvearrowright X$. Note that $\gr (\A \otimes \k[G]^\lambda)$ identifies with $\k[X] \otimes \k[G]^\lambda$ since $\k[G]^\lambda$ has a filtered basis.
\item[(Q2)] There exists a $G$-equivariant filtered homomorphism $\Phi : U(\g)^\lambda \to \A$  such that $\gr \Phi = \iota \circ \mu^*$.\vspace{4pt}
\end{enumerate}

We usually denote a Hamiltonian quantization by $(\A, \iota, \Phi)$, and we call $\Phi$ the {\it quantum comoment map}. An isomorphism of Hamiltonian quantizations $(\cA,\iota,\Phi)\to (\cA',\iota',\Phi')$ is an isomorphism of quantizations $\phi:(\cA,\iota)\to(\cA',\iota')$ such that $\phi\circ\Phi=\Phi'$.

We write $\Quant^G(A)$ for the set of isomorphism classes of Hamiltonian quantizations of $A := \k[X]$. We emphasise that a fixed choice of cocharacter $\lambda \in X_*(G)$ is embedded in the definition of a Hamiltonian quantization, and we sometimes write $\Quant^G(A^\lambda)$ to emphasise this choice. The following lemma is straightforward.
\begin{Lemma}\label{L:twistquant}
    Fix a cocharacter $\lambda:\k^\times\to G$. There is a natural bijection
    \begin{eqnarray*}
    \begin{array}{rcl}
        \Quant^G(A) & \xrightarrow{\,\,\,\sim\,\,\,} & \Quant^G(A^\lambda)\\
        \A & \longmapsto & \A^\lambda.
        \end{array}
    \end{eqnarray*}
\end{Lemma}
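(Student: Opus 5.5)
The plan is to check that the twisting operation $\V \mapsto \V^\lambda$ of Subsection~\ref{ss:filteredgradedspaces} carries every piece of the data in the definition of a $\lambda$-Hamiltonian quantization to the corresponding piece of data, and that it can be undone. I read the source $\Quant^G(A)$ as quantizations for the trivial cocharacter, with the untwisted PBW filtration on $U(\g)$ and $G$-coaction graded with $\k[G]$ in degree $0$. The target $\Quant^G(A^\lambda)$ consists of $\lambda$-Hamiltonian quantizations of $A = \k[X]$ with its $\lambda$-twisted grading; this uses that $\lambda(\k^\times)\subseteq G$ acts on $A$ through $G$, preserving the grading.

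First step: define $\A^\lambda$. Given $(\A,\iota,\Phi)$, restrict the rational $G$-action on $\A$ to $\lambda:\k^\times\to G$. By (Q1) the coaction is filtered, and $\lambda(\k^\times)$ acts trivially on $\k[G]$ in degree $0$, so each $\A_i$ is $\k^\times$-stable. The action is rational, so each $\A_i$ decomposes into weight spaces and $\A_i=\bigoplus_j\{a\in\A_i\mid \lambda(t)a=t^ja\}$. Hence the twisted filtration \eqref{eq:twistedfiltration} is a well-defined, exhaustive algebra filtration. Separatedness should follow because each weight space inherits separatedness from $\A$.

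Lemma~\ref{L:twistingfiltrations} gives $\gr(\A^\lambda)=(\gr\A)^\lambda$ as graded algebras. The bracket estimate $[\A^\lambda_i,\A^\lambda_j]\subseteq \A^\lambda_{i+j-2}$ holds because weights add under multiplication and commutators. Since $\iota$ is $G$-equivariant, and in particular $\k^\times$-equivariant, the remark after the definition of twisted gradings shows that $\iota: A^\lambda\to(\gr\A)^\lambda$ is still a graded Poisson isomorphism.

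Second step: check (Q1) and (Q2) after twisting. For (Q2), $\Phi$ is $G$-equivariant and filtered for the untwisted PBW filtration, so it is filtered $U(\g)^\lambda\to\A^\lambda$ by the filtered version of that remark. Moreover $\gr\Phi$ twists to $\iota\circ\mu^*$, again by Lemma~\ref{L:twistingfiltrations}.

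For (Q1), take $a\in\A$ of degree $d$ and $\lambda$-weight $j$, and write its coaction as $\sum a_k\otimes f_k$. I would decompose each $a_k$ into $\lambda$-weights and each $f_k$ into $\lambda$-conjugation weights. Equivariance of the coaction under $\lambda(t)$ acting by conjugation on $\k[G]$ and on $\A$ should force the weights of $a_k$ and $f_k$ to sum to $j$. With the untwisted degrees summing to at most $d$, this gives the twisted filtered condition $\A^\lambda_i\to\sum\A^\lambda_{j}\otimes\k[G]^\lambda_{\le k}$. The associated graded statement then follows once more from Lemma~\ref{L:twistingfiltrations}. I expect this bookkeeping with the conjugation grading to be the main obstacle. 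The exact identity I will need is
\[
\Delta(\lambda(t)a)=(\lambda(t)\otimes c_{\lambda(t)})\Delta(a),
\]
where $c$ denotes conjugation, and this comes from the fact that conjugation by $\lambda(t)$ intertwines the coaction.

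Third step: naturality and bijectivity. An isomorphism of quantizations is $G$-equivariant, because it intertwines the quantum comoment maps and the actions. It therefore preserves weight spaces and induces an isomorphism of the twisted objects, so the map is well defined on isomorphism classes.

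For the inverse, twist by $\lambda^{-1}$. Applying the same construction to a $\lambda$-Hamiltonian quantization gives back an untwisted one, and $(\V^\lambda)^{\lambda^{-1}}=\V$ on weight spaces. The two constructions are therefore mutually inverse.
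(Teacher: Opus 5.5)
Your proposal is correct and is the argument the paper leaves to the reader; the paper gives no proof and only calls the lemma straightforward. The steps are: twist by $\lambda(\k^\times)$-weights, apply Lemma~\ref{L:twistingfiltrations} and the remark that an equivariant map is filtered iff its twist is, track weights in the coaction via $\Delta(\lambda(t)a)=(\lambda(t)\otimes c_{\lambda(t)})\Delta(a)$, and invert by twisting with $\lambda^{-1}$.

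Two steps should be justified rather than asserted. First, the paper's definition does not require an isomorphism $\phi$ of Hamiltonian quantizations to be $G$-equivariant, so ``it intertwines the actions'' is circular as stated. Instead, use that for closed $X\subseteq\g^*$ the map $\Phi$ is surjective (Subsection~\ref{ss:quantclosedsubschemes}); then $\phi\circ\Phi=\Phi'$ forces $\phi$ to be equivariant, and it also pins down the $G$-action on $\A$. Second, for the inverse, the $\lambda(\k^\times)$-stability of the pieces $\B_i$ of a $\lambda$-Hamiltonian quantization is not literally ``the same construction''. It follows by restricting (Q1) to $\lambda(\k^\times)$, on which every $f\in\k[G]^\lambda_k$ with $k\neq 0$ vanishes.
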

\begin{Remark}
\label{R:changinggradings}
    Throughout Section~\ref{s:prelimquants} we have used the notation $A^\lambda$ from Section~\ref{ss:filteredgradedspaces} to denote a twist of the PBW grading. In later sections, we occasionally suppress the superscript for simplicity, but this should cause no confusion since all such abuses are clearly announced. We advise the reader to pay special attention to the grading used in Subsections~\ref{ss:Katsylosection}, \ref{ss:primidlstoquants} and \ref{ss:classquants}. 
\end{Remark}

\subsection{Quantizations of closed subschemes of the coadjoint representation}
\label{ss:quantclosedsubschemes}

Let $G$ be a standard reductive algebraic group, $\g = \Lie G$ and fix a cocharacter $\lambda :  \k^\times  \to G$. Let $X \subseteq \g^*$ be a closed $G\times \k^\times$-stable subscheme and write $I_X$ for the defining ideal of $X$. In this case, $X$ is Poisson, the action $G \curvearrowright X$ is graded Hamiltonian, and the comoment map is surjective.

If $(\A, \iota, \Phi)$ is a Hamiltonian quantization of $\k[X]$, then $\Phi$ is surjective; indeed, by Lemma~\ref{L:twistingfiltrations} we may reduce to the case where $\lambda$ is trivial, in which case the filtration is discrete and we may apply \cite[Corollary 6.14]{MR}. A similar argument shows that $\Phi$ is strict (in the language of \cite{MR}), which yields that $\gr \ker \Phi = \ker \mu^* = I_X$ using (Q2). Write $\Idl_X^G U(\g)^\lambda$ for the set of $G$-stable ideals $J \unlhd U(\g)$ such that $\gr J = I_X$.

\begin{Proposition}
\label{P:quantizationsvsideals}
The following map is a bijection
    \begin{eqnarray}
    \label{eq:quantstoideals}
        \begin{array}{rcl}
        \Quant^G \k[X]^\lambda & \longrightarrow & \Idl_X^G U(\g)^\lambda\\
        (\A, \iota, \Phi) & \longmapsto & \ker \Phi.
        \end{array}
    \end{eqnarray}
\end{Proposition}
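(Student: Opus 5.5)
The plan is to show that the map is well defined, injective and surjective, by constructing the explicit inverse $J \mapsto (U(\g)/J, \iota_J, \Phi_J)$.

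\emph{Well defined.} The paragraph preceding the statement already gives that $\Phi$ is surjective and strict with $\gr\ker\Phi = I_X$. So it remains to check that $\ker\Phi$ is $G$-stable, and this is immediate from the $G$-equivariance of $\Phi$ in (Q2). Since $\ker\Phi$ is determined by the isomorphism class of $(\A,\iota,\Phi)$ (an isomorphism $\phi$ satisfies $\phi\circ\Phi=\Phi'$), the map descends to isomorphism classes.

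\emph{Inverse.} Given $J\in \Idl_X^G U(\g)^\lambda$, set $\A_J := U(\g)/J$ with the quotient filtration induced from $U(\g)^\lambda$, and let $\Phi_J$ be the projection. Then:
\begin{itemize}
\item Since $U(\g)^\lambda$ is almost commutative of degree $-2$, so is $\A_J$.
\item Strictness of quotient filtrations gives $\gr \A_J \cong \gr U(\g)^\lambda/\gr J = \k[\g^*]^\lambda/I_X = \k[X]^\lambda$. This is a graded Poisson isomorphism, because the bracket on $\gr$ is computed from commutators in $U(\g)$; we take $\iota_J$ to be its inverse.
\item By construction $\gr\Phi_J$ is the projection $\k[\g^*]\to\k[X]$ composed with $\iota_J$, i.e.\ $\gr \Phi_J = \iota_J\circ\mu^*$, which is (Q2).
\item For (Q1), $G$ acts rationally on $U(\g)$ via the adjoint action, and this action preserves $J$, so it descends to $\A_J$. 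The coaction on $U(\g)^\lambda$ is filtered with respect to $\k[G]^\lambda_{\le k}$: it is so on $\g$, by the $\lambda$-eigenspace grading on $\k[G]$ and the graded coaction on $\k[\g^*]^\lambda$, and multiplicativity extends this to all of $U(\g)$. Its associated graded map is the coaction on $\k[\g^*]$. Passing to the quotient by $J$ preserves both facts, using that $\k[G]^\lambda$ has a filtered basis so that $\gr(\A_J\otimes\k[G]^\lambda)=\k[X]\otimes\k[G]^\lambda$.
\end{itemize}
Clearly $\ker\Phi_J = J$, so the original map is surjective.

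\emph{Injective.} Given $(\A,\iota,\Phi)$ with $\ker\Phi = J$, surjectivity of $\Phi$ yields an algebra isomorphism $\bar\Phi: U(\g)/J\to \A$ with $\bar\Phi\circ\Phi_J=\Phi$. Strictness of $\Phi$ means that the filtration on $\A$ is exactly the image filtration, so $\bar\Phi$ is a filtered isomorphism with filtered inverse. It remains to check the compatibility $\gr\bar\Phi\circ\iota_J=\iota$. Since $\gr\Phi_J$ is surjective, this follows from
\[
\gr\bar\Phi\circ\gr\Phi_J=\gr\Phi=\iota\circ\mu^* \quad\text{and}\quad \gr\Phi_J=\iota_J\circ\mu^*.
\]
Hence $(\A,\iota,\Phi)\cong(\A_J,\iota_J,\Phi_J)$. ($G$-equivariance of $\bar\Phi$ is automatic from that of $\Phi$, though it is not formally required of isomorphisms.)

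\emph{Main obstacle.} The delicate point is strictness of $\Phi$ when the $\lambda$-twisted filtration is not discrete, since this is what makes $\bar\Phi$ a filtered isomorphism. I would handle it as the text indicates: twisting by $\lambda$ is an equivariant re-indexing of filtrations (Lemma~\ref{L:twistingfiltrations}), so a map is filtered and strict with respect to the twisted filtrations iff it is with respect to the untwisted ones. This reduces to the trivial-$\lambda$ case, where the PBW filtration is discrete and nonnegative. There, strictness of a filtered map whose associated graded map is surjective is standard: given $a\in\A_i$, choose $u\in U(\g)_i$ with $\gr\Phi(\bar u)=\bar a$, and induct downward on degree, the induction terminating because the filtration is discrete.
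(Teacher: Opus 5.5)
Your proposal is correct and takes the same route as the paper's proof. The inverse is $J\mapsto (U(\g)/J,\iota_J,\Phi_J)$, and $\ker\Phi_J=J$ makes it a right inverse. For the left inverse, you factor $\Phi$ through $\Phi_J$ to get a filtered isomorphism $\bar\Phi$, then cancel the surjective $\mu^*$ in $\gr\bar\Phi\circ\iota_J\circ\mu^*=\iota\circ\mu^*$.

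Your additional checks are details the paper leaves implicit: you verify (Q1) for $U(\g)/J$, and you reduce strictness to the untwisted case. In that case the filtration on $\A$ is discrete because it is separated and $\gr\A\cong\k[X]$ lives in nonnegative degrees.
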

\begin{proof}
As noted above, \eqref{eq:quantstoideals} takes values in the correct set. Let $(\A, \iota, \Phi)$, $(\A', \iota', \Phi')\in\Quant^G  \k[X]^\lambda$ such that there exists an isomorphism of Hamiltonian quantizations $\Psi : \A \to \A'$. By definition $\Psi\circ\Phi=\Phi'$ and so $\ker\Phi=\ker\Phi'$. Hence \eqref{eq:quantstoideals} is well-defined.

We now construct an inverse to \eqref{eq:quantstoideals}. For $J\in \Idl_X^G U(\g)^\lambda$, let $\Phi_J:U(\g)\to U(\g)/J$ denote the natural projection. Then $((U(\g)/J)^\lambda,\Id_{\k[X]},\Phi_J)\in\Quant^G \k[X]$. This is clearly a right inverse to \eqref{eq:quantstoideals}; we show it is a left inverse.

Let $(\A, \iota, \Phi)\in\Quant^G  \k[X]^\lambda$ and write $J = \Ker \Phi$.  
Since $\Phi$ factors through $\Phi_J$ we obtain an isomorphism of filtered algebras $\bar\Phi : (U(\g) / J)^\lambda \to \A$ satisfying $ \overline{\Phi} \circ \Phi_J=\Phi$.
Combining the equalities $\gr\Phi=\iota\circ\mu^*$ (from (Q2)) and $\gr\Phi_J=\Id_{\k[X]}\circ \mu^*$ with the surjectivity of $\mu^*$ yields $\gr\o\Phi\circ\Id_{\k[X]}=\iota$. Thus $\bar\Phi$ determines an isomorphism of quantizations $((U(\g) / J)^\lambda, \Id_{\k[X]}, \Phi_J) \to (\A, \iota, \Phi)$, and the result follows.
\end{proof}

 \section{Finite $W$-algebras}
 \label{S:Walgebrassmallmodules}
Throughout this section, $G$ is a standard reductive group over an algebraically closed field $\k$ of characteristic $p>0$. In particular, there exists a non-degenerate $G$-invariant symmetric bilinear form $\o{\kappa}:\g\times\g\to\k$, and thus a $G$-equivariant isomorphism $\kappa \colon \g\xrightarrow{\sim} \g^*$, $x\mapsto \o\kappa(x,-)$ as in \eqref{eq:kappa}.

\subsection{Associated cocharacters and the Kazhdan grading}
\label{ss:asscochar}

Fix $e\in \Nc(\g)$ and let $\chi = \kappa(e) \in \g^*$. Define $\lambda = \lambda_e \colon \k^\times \to G$ to be a representative of the $(G^e)^\circ$-conjugacy class of cocharacters {\it associated to $e$}, in the sense of \cite[\textsection 5.3]{JaNO} (see also \cite{PrKR} where the same class of cocharacters were characterised via Kempf--Rousseau theory, and named {\it optimal cocharacters}).

The grading $\g = \bigoplus_{i\in \Z} \g(i)$ given by $\g(i) := \{ x\in \g \mid \lambda(t)x = t^i x \text{ for } t\in \k^\times\}$ is known as the {\it Dynkin grading}. It satisfies two key properties:
\begin{enumerate}
\item $e \in \g(2)$;
\item $\g^e \subseteq \g(\ge 0)$.
\end{enumerate}

We occasionally use the notation $\g(\ge \! i) = \bigoplus_{j\ge i} \g(j)$, and similar for $\g(\le i)$ and $\g(<\!i)$ or $\g(>\!i)$. We note that $\g(\ge \! 0)$ and $\g(\le \!0)$ are parabolic subalgebras, and we denote the unipotent radicals of their parabolic subgroups by $G(< \!0)$ and $G(>\!0)$.

The {\it Kazhdan grading (associated to $\lambda_e$)} on $\k[\g^*]$ is defined by placing the generators $\g(i)$ in degree $i+2$, and the Kazhdan filtration on $U(\g)$ is defined similarly. This coincides with the $\lambda_e$-twisted grading on $\k[\g^*]$ and $\lambda_e$-twisted filtration on $U(\g)$, in the notation of Section~\ref{ss:filteredgradedspaces}.

We define the Kazhdan $\k^\times$-action $\gamma_e : \k^\times\to \GL(\g)$ by 
\begin{eqnarray}
\label{eq:Kazhdanaction}
\gamma_e(t) x = t^2 \lambda_e(t) x \text{ for } x\in \g.    
\end{eqnarray}

We extend the action of $\gamma_e(\k^\times)$ on $\g$ to $\k[\g^*]$, so the Kazhdan grading coincides with the grading by characters of $\gamma_e(\k^\times)$. A reduced affine subscheme $X\subseteq \g^*$ is $\gamma_e(\k^\times)$-stable if and only if $\k[\g^*] \to \k[X]$ is a graded homomorphism.

Recall the notation \eqref{eq:idealIchi} and \eqref{eq:idealJchi}. The ideal $J_\chi \unlhd \k[\g^*]$ is a Kazhdan graded ideal, and so the $\chi$-reduced algebra
$\k_\chi[\g^*]$ is Kazhdan graded. Similarly the Kazhdan filtration on $U(\g)$ descends to the reduced enveloping algebra $U_\chi(\g)$.

\begin{Lemma}\label{L:grUchi}
    For $\eta \in \chi + \kappa \g(\le 1)$ we have a natural isomorphism of Kazhdan graded algebras $$\gr U_\eta(\g) \cong \k_\chi[\g^*].$$
\end{Lemma}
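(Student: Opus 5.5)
We compute $\gr U_\eta(\g)$ by passing through the Kazhdan-filtered algebra $U(\g)$ and the ideal generated by the $p$-central elements. Write $U_\eta(\g) = U(\g)/U(\g)I_\eta$ with $I_\eta = (x^p - x^{[p]} - \eta(x)^p \mid x\in\g)$. Choose a basis $x_1,\dots,x_n$ of $\g$ adapted to the Dynkin grading, with $x_k\in \g(n_k)$, so that $x_k$ has Kazhdan degree $n_k+2$.

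The first step is to identify the Kazhdan symbol of each generator $\xi_k := x_k^p - x_k^{[p]} - \eta(x_k)^p$. Since the restricted structure is $G$-equivariant, hence $\lambda_e$-equivariant, $x_k^{[p]}\in \g(pn_k)$, of Kazhdan degree $pn_k+2 < p(n_k+2)$. The element $x_k^p$ has degree $p(n_k+2)$. For the scalar $\eta(x_k)^p$ (degree $0$), the hypothesis $\eta\in\chi+\kappa\g(\le 1)$ is used as follows. Since $\kappa$ is $G$-equivariant, $\kappa(\g(i))$ pairs nontrivially only with $\g(-i)$. Hence $(\eta-\chi)(x_k)\neq 0$ forces $n_k\ge -1$, so $n_k+2\ge 1>0$ and $\eta(x_k)^p$ is of strictly lower degree than $x_k^p$. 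Similarly $\chi(x_k)\neq 0$ forces $n_k=-2$, and then both $x_k^p$ and $\chi(x_k)^p$ have degree $0$. Therefore the symbol of $\xi_k$ is $x_k^p - \chi(x_k)^p$ in $\k[\g^*]$, exactly the generators of $J_\chi$. This is the same computation for all $\eta$ in the given coset, which explains the hypothesis.

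The second step is to show that these symbols generate $\gr(U(\g)I_\eta)$. By PBW, $U(\g)$ is free over $Z_p(\g)=\k[\xi_1,\dots,\xi_n]$ with basis the monomials $x^{a}$, $0\le a_k<p$. Equivalently, it has a basis of ordered monomials $\prod_k \xi_k^{b_k}\,x^{a}$ in which $x_k$ appears with total exponent $pb_k+a_k$. Since $\xi_k = x_k^p + (\text{lower})$, each such monomial has leading term the corresponding PBW monomial in the $x_k$, so this basis is a filtered basis. It follows that $U(\g)I_\eta$ has as a filtered basis those elements with $b\ne 0$ after shifting by the constants. Their symbols $\prod(x_k^p-\chi(x_k)^p)^{b_k}x^a$ with $b\neq 0$ span $J_\chi\k[\g^*]$, giving $\gr(U(\g)I_\eta)=\k[\g^*]J_\chi$.

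The main subtlety is that the Kazhdan filtration is not discrete, since $\g(\le -3)$ has negative degree. So one cannot naively pass from generators of an ideal to generators of its symbol ideal. The explicit filtered basis avoids this: a sum of basis elements has symbol equal to the sum of the top-degree symbols, which are linearly independent. Concretely, write an element of $U(\g)I_\eta$ in the basis $\prod\xi^b x^a$ with $b\ne0$ and note that the top-degree part is a nonzero combination of independent symbols lying in $\k[\g^*]J_\chi$. Since the quotient map is strict, $\gr U_\eta(\g) = \gr U(\g)/\gr(U(\g)I_\eta) = \k[\g^*]/\k[\g^*]J_\chi = \k_\chi[\g^*]$. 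Naturality is clear, since the map is induced by $\gr U(\g)=\k[\g^*]$.
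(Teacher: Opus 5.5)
Your argument is correct, but it reaches the conclusion by a different route from the paper in the second half.

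\textbf{Comparison with the paper.} The paper does the same three-case computation of the symbols of the generators $x^p-x^{[p]}-\eta(x)^p$. It then needs only the containment $\k[\g^*]J_\chi\subseteq\gr(U(\g)I_\eta)$. Since $\k[\g^*]\twoheadrightarrow\gr U_\eta(\g)$, this makes $\gr U_\eta(\g)$ a quotient of $\k_\chi[\g^*]$, and the equality $\dim U_\eta(\g)=p^{\dim\g}=\dim\k_\chi[\g^*]$ forces an isomorphism.

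You instead compute $\gr(U(\g)I_\eta)$ exactly, using a filtered basis adapted to the decomposition $U(\g)=U(\g)I_\eta\oplus\sspan\{x^a\}$. The paper's route is shorter. However, its dimension count tacitly uses $\dim\gr U_\eta(\g)=\dim U_\eta(\g)$, i.e.\ that the filtration induced on $U_\eta(\g)$ by the non-discrete Kazhdan filtration is separated. Your route never needs this, and in fact proves it: it shows that the images of the restricted PBW monomials $x^a$ form a filtered basis of $U_\eta(\g)$.

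\textbf{One correction.} Your justification that $\{\xi^bx^a\}$ is a filtered basis is wrong as written. For $x_k\in\g(-2)$ with $\chi(x_k)\neq0$, the symbol of $\xi_k$ is $x_k^p-\chi(x_k)^p$, not $x_k^p$, as your own first step shows. So $\xi_k=x_k^p+(\text{lower})$ fails there, and the leading term of $\xi^bx^a$ is not the PBW monomial. The correct argument runs as follows:
\begin{enumerate}
\item Symbols multiply because $\gr U(\g)=\k[\g^*]$ is a domain, so the symbol of $\xi^bx^a$ is $\prod_k(x_k^p-\chi(x_k)^p)^{b_k}x^a$.
\item These elements form a basis of $\k[\g^*]$, because $\k[\g^*]$ is free over $\k[\g^*]^p$ on the $x^a$ ($0\le a_k<p$), and $\k[\g^*]^p$ is the polynomial ring in the $x_k^p-\chi(x_k)^p$.
\end{enumerate}
This is not cosmetic. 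If the symbols really were the PBW monomials, the same reasoning would give $\gr(U(\g)I_\eta)=\k[\g^*]J_0$ and hence $\k_0[\g^*]$ rather than $\k_\chi[\g^*]$. Your third paragraph already uses the correct symbols, so only the justification in the second paragraph needs to be replaced.
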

\begin{proof}
        Since $\dim U_\eta(\g) = p^{\dim \g} = \dim \k_\chi[\g^*]$ and $\k[\g^*] \onto \gr U_\eta(\g)$ it suffices to show that $J_\chi \subseteq \gr I_\eta$.
        
        Since $e\in \g(2)$ we see that homogeneous $x \in \g(i)$ satisfies $\chi(x) = 0$ for $i \ne -2$. 
        Similarly, if $\eta \in \chi + \kappa\g(\le 1)$ then $\eta(x) = \chi(x)$ for $x \in \g(i), i\le -2$.

        If $x\in \g(i)$ then $x^p$ lies in Kazhdan degree $ip + 2p$ whilst $x^{[p]}$ lies in Kazhdan degree $ip + 2$. Hence $x^p - x^{[p]}$ lies in Kazhdan degree $ip + 2p$ and the symbol is $x^p \in \k[\g^*]$.
        
        We show that $J_\chi \subseteq \gr I_\eta$ by considering three cases.
        
        Case $i < -2$. For $x\in \g(i)$ we have $\chi(x) = \eta(x) = 0$ by the second paragraph of the proof, and so $\gr (x^p - x^{[p]} - \eta(x)^p) = x^p = x^p - \chi(x)^p$.

        Case $i = -2$. In this case $x\in \g(-2)$ has $x^p$ and $\eta(x)^p$ in Kazhdan degree 0 whilst $x^{[p]}$ lies in lower Kazhdan degree. Therefore $\gr (x^p - x^{[p]} - \eta(x)^p) = x^p - \eta(x)^p = x^p - \chi(x)^p$.

        Case $i > -2$. For $x \in \g(i)$, the Kazhdan degree of $x^p$ is greater than that of $x^{[p]}$ and also the degree of $\eta(x)^p$. So $\gr(x^p - x^{[p]} - \eta(x)^p) = x^p = x^p - \chi(x)^p$.

        We have shown that the generators of $J_\chi$ lie in $\gr I_\eta$, and this completes the proof.
\end{proof}

\subsection{Good transverse slices}

Let $\v \subseteq \g$ be a Dynkin graded vector space complement to $[\g,e]$ inside $\g$. If we write $\chi := \kappa(e)$ then the {\it good transverse slice to $\O_\chi$} is the variety
$$\S_\chi := \chi + \kappa(\v).$$
Although this depends on a choice of $\v$ and $\kappa$, the key properties of $\S_\chi$ are independent of these choices.

The inclusion $\g^e\subseteq \g(\geq0)$ implies that $[\g,e] \supseteq \g(>0) := \bigoplus_{i>0} \g(i)$, and so $\v \subseteq \g(\le 0)$ by homogeneity. Furthermore, $e$ lying in $\g(2)$ shows that $\chi$ is supported in graded degree $-2$, which is Kazhdan degree $0$. 

Since $\g^e\subseteq \g(\geq0)$ and  $\bar\kappa : \g^e \times \v \to \k$ is nondegenerate, we see that every homogeneous element of $\v$ lies in negative Kazhdan degree. 
It follows that $\S_\chi$ is $\gamma_e(\k^\times)$-stable, and the induced action is contracting: $\chi \in \overline{\gamma_e(\k^\times) \eta}$ for all $\eta \in \g^*$, and $\S_\chi^{\gamma_e(\k^\times)} = \{ \chi \}$.
As a result, $\k[\S_\chi] = \bigoplus_{i \ge 0} \k[\S_\chi]_i$ is positively Kazhdan graded.

The main properties of the slice are explained by the following well-known lemma. The standard reference for parts (1) and (2) over $\C$ is \cite[\textsection 2]{GG}. They can be proved for standard reductive groups by similar arguments, with suitable adaptations as in \cite[\textsection 5.1]{GTmodular}. For a direct proof of (3) see \cite[Corollary 3.24]{Ta}, whilst (4) follows from (3) by general arguments.

\begin{Lemma}
\label{L:sliceproperties}
\begin{enumerate}
\setlength{\itemsep}{4pt}
\item $\k[\S_\chi]$ inherits a Poisson structure by Hamiltonian reduction
\begin{eqnarray*}
\S_\chi \cong \mu^{-1}(\chi) /\!\!/ G(<0)
\end{eqnarray*}
where the moment map $\mu : \g^* \to \g(<\!\!-1)^*$ is given by restriction.
\item The Poisson bracket lies in degree $-2$ 
$$\{\k[\S_\chi]_i , \k[\S_\chi]_j\} \subseteq \k[\S_\chi]_{i+j-2}.$$
\item $\g^* = T_\eta \O_\eta + T_\eta\S_\chi$ for all $\eta \in \S_\chi$.
\item $\dim (\O_\eta \cap \S_\chi) = \dim(\O_\eta) - \dim(\O_\chi)$ for all $\eta \in \S_\chi$.
\end{enumerate} 
\end{Lemma}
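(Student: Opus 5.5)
The four statements of Lemma~\ref{L:sliceproperties} are the characteristic-$p$ counterparts of the Gan--Ginzburg results. The plan is to prove (1) and (2) by following \cite[\textsection 2]{GG}, with the standard reductive hypotheses replacing the use of $\mathfrak{sl}_2$-triples, and then derive (3) by a contraction argument and (4) by a dimension count.

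\textbf{Parts (1) and (2).} Set $\mathfrak m := \g(<\!-1)$. Since $\chi$ is supported in Dynkin degree $-2$, the restriction $\chi|_{\mathfrak m}$ is a character of $\mathfrak m$. Hence $\mu^{-1}(\chi) = \chi + \mathfrak m^\perp$ is stable under $M := G(<0)$.

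The central step is to show that the action map
\[
M \times \S_\chi \longrightarrow \chi + \mathfrak m^\perp
\]
is an isomorphism of varieties. I would verify this by the usual Kazhdan-contraction argument:
\begin{itemize}
\item Both sides are affine spaces carrying contracting $\k^\times$-actions. These are $\gamma_e$ on the target, and $\gamma_e$ combined with conjugation by $\lambda_e$ on $M$.
\item The map is $\k^\times$-equivariant.
\item Its differential at $(1,\chi)$ is bijective. This follows from $[\mathfrak m, e] \oplus \v \supseteq \g(\geq -1)$ via $\kappa$, together with the dimension count $\dim \mathfrak m + \dim \v = \dim \mathfrak m^\perp$. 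The count uses $\g^e \subseteq \g(\geq 0)$ and the injectivity of $\ad e$ on $\g(<\!-1)$.
\item An equivariant morphism between contracting affine spaces that is étale at the fixed point is an isomorphism.
\end{itemize}
The positive-characteristic inputs are that $\kappa$ is non-degenerate and that the Dynkin grading has the stated properties. Both are guaranteed by the standing hypotheses; this is the adaptation in \cite[\textsection 5.1]{GTmodular}.

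With the isomorphism in hand, $\k[\mu^{-1}(\chi)]^{M} \cong \k[\S_\chi]$. Since $\mathfrak m$-invariants of a Poisson quotient inherit a bracket, this gives (1). Part (2) follows because the reduction is compatible with the Kazhdan grading, and the bracket on $\k[\g^*]$ lies in degree $-2$ for the twisted grading by Lemma~\ref{L:twistingfiltrations}.

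\textbf{Part (3).} The set of $\eta \in \S_\chi$ where transversality $\g^* = T_\eta\O_\eta + T_\eta \S_\chi$ holds is open. At $\eta = \chi$ it holds because $\v$ is a complement to $[\g,e]$. The set is also $\gamma_e(\k^\times)$-stable, since $\gamma_e$ rescales $\S_\chi$ and moves orbits to orbits. Because the action contracts $\S_\chi$ to $\chi$, every $\gamma_e$-orbit closure meets any open neighbourhood of $\chi$, so this open stable set is all of $\S_\chi$. This is where I expect the main care to be needed: one must check that $\gamma_e$ really normalizes the orbit structure, using $\gamma_e(t)\cdot\eta = t^{2}\lambda_e(t)\cdot \eta$ and the fact that scaling preserves tangent-space sums. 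If needed, I would alternatively cite \cite[Corollary 3.24]{Ta}.

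\textbf{Part (4).} By (3), the intersection $\O_\eta \cap \S_\chi$ is transverse at every point. Its tangent space at $\eta$ therefore has dimension
\[
\dim \O_\eta + \dim \S_\chi - \dim \g^* = \dim \O_\eta - \dim \O_\chi,
\]
since $\dim \S_\chi = \dim \g^e = \dim \g - \dim \O_\chi$ by \eqref{L:smoothstabilisers}. This bounds the dimension from above. For the lower bound, the morphism $G \times \S_\chi \to \g^*$ is smooth by (3), so each nonempty fibre intersection has the expected dimension and equality holds.
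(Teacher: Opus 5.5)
Your plan follows the same route as the paper, which does not prove this lemma itself. The paper defers (1)--(2) to Gan--Ginzburg with the usual characteristic-$p$ adaptations, cites the literature for (3), and says (4) follows from (3) by general arguments. Your direct contraction proof of (3) is a fine replacement for that citation. It is cleanest if you phrase transversality as surjectivity of $\g\oplus\v\to\g^*$, $(x,v)\mapsto x\cdot\eta+\kappa(v)$, using $T_\eta\O_\eta=\g\cdot\eta$ (valid because stabilisers are smooth); openness is then a rank condition. Your argument for (4) is correct.

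However, the one verification you actually carry out for (1) is wrong as written. You compute the differential of $M\times\S_\chi\to\chi+\mathfrak m^\perp$ at $(1,\chi)$ using $\mathfrak m=\g(<\!-1)$, but $\Lie M=\Lie G(<0)=\g(<0)$. With $\mathfrak m$ in place of $\Lie M$, both of your claims fail whenever $\g(-1)\neq 0$:
\begin{itemize}
\item Since $\mathfrak m^\perp=\kappa(\g(\le 1))$, we have $\dim\mathfrak m^\perp=\dim\g(\le 1)$. But $\dim\mathfrak m+\dim\v=\dim\g(\le -2)+\dim\g(0)+\dim\g(1)$, which is short by $\dim\g(-1)$.
\item $[\mathfrak m,e]+\v\subseteq\g(\le 0)$, so it cannot fill out $\kappa^{-1}(\mathfrak m^\perp)=\g(\le 1)$. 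The $\g(\ge -1)$ in your claim also has the degrees reversed.
\end{itemize}

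The correct statement is that the differential $\g(<0)\oplus\v\to\kappa(\g(\le 1))$, $(x,v)\mapsto\kappa([x,e]+v)$, is bijective. Three facts give this:
\begin{itemize}
\item $\ad e$ is injective on $\g(<0)$, because $\g^e\subseteq\g(\ge 0)$.
\item By homogeneity, $[\g,e]\cap\g(\le 1)=[\g(<0),e]$.
\item $\v\subseteq\g(\le 0)$ is a graded complement to $[\g,e]$, so $\g(\le 1)=[\g(<0),e]\oplus\v$.
\end{itemize}
The $\g(-1)$-directions of $M$ are exactly what reach $\kappa(\g(1))$. This is why one reduces by $G(<0)$ even though the moment map only sees $\g(<\!-1)$. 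Similarly, $M$-stability of $\chi+\mathfrak m^\perp$ requires $\chi([\g(<0),\mathfrak m])=0$, not merely that $\chi|_{\mathfrak m}$ is a character; both hold for degree reasons.

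With these corrections the rest of your argument for (1)--(2) goes through in characteristic $p$:
\begin{itemize}
\item $M$ is $\k^\times$-equivariantly an affine space with positive weights, via a product of root subgroups.
\item The contraction lemma is a characteristic-free graded Nakayama argument.
\item The invariants $\k[\mu^{-1}(\chi)]^M\cong\k[\S_\chi]$ form a Kazhdan-graded subalgebra, because $\lambda_e(\k^\times)$ normalises $M$.
\item The reduced bracket is well defined, because $M$-invariants are $\mathfrak m$-invariant.
\end{itemize}
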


\subsection{Finite $W$-algebras}\label{ss:finWalg}

The {\em finite $W$-algebra} corresponding to $\g$ and $e$ is a filtered quantization of the good transverse slice, constructed by quantum Hamiltonian reduction
\begin{eqnarray}
U(\g,e) := (U(\g) / U(\g) \{x-\chi(x) \mid x\in \g(<\! -1)\})^{G(<0)}.    
\end{eqnarray} More details can be found in \cite[\textsection 4]{GTmodular}; in the language of {\em loc. cit.}, we take $\l=0$ for the compatible isotropic subspace of $\g(-1)$.

The {\it variety of 1-dimensional representations of $U(\g,e)$} is $\E(\g,e) := \Spec U(\g,e)^\ab$ where $U(\g,e)^\ab$ denotes the maximal abelian quotient of $U(\g,e)$, which is defined by $$U(\g,e)^\ab := U(\g,e)/([u,v] \mid u,v \in U(\g,e)).$$ The 1-dimensional representations of $U(\g,e)$ are classified by the closed points of $\E(\g,e)$.

Let $I_{\S_\chi} \unlhd Z_p(\g)$ be the defining ideal of $\S_\chi^{(1)} \subseteq (\g^*)^{(1)}$. Consider the reduced enveloping algebra over the slice
$$U_{\S_\chi}(\g) := U(\g) / U(\g) I_{\S_\chi}.$$ Note that every irreducible $U_{\S_\chi}(\g)$-module is an irreducible $U_\eta(\g)$-module for some $\eta\in\S_\chi$, and so by \eqref{eq:Premetstheorem} and Lemma~\ref{L:sliceproperties}(4) has dimension divisible by $p^{\frac{1}{2}\dim\O_\chi}$.

\begin{Lemma} \cite[\textsection\textsection 7, 9]{GTmodular}
\label{L:Walgebraproperties}
Let $d_\chi=\dim\O_\chi$. The following hold:
\begin{enumerate}
\setlength{\itemsep}{4pt}
\item The Kazhdan filtration on $U(\g)$ descends to $U(\g,e)$ and induces a positive filtration such that $\gr U(\g,e) \cong \k[\S_\chi]$ as graded Poisson algebras.
\item $U_{\S_\chi}(\g) \cong \Mat_{p^{\frac{1}{2}d_\chi}} U(\g,e)$.
\item There is a bijection between the set of 1-dimensional $U(\g,e)$-modules and the set of $p^{\frac{1}{2}d_\chi}$-dimensional $U(\g)$-modules with $p$-character in $\S_\chi$.
\end{enumerate}
\end{Lemma}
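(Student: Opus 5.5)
The statement is cited from \cite[\textsection\textsection 7, 9]{GTmodular}. The plan is to reconstruct the three parts from the Premet--Skryabin/Gan--Ginzburg machinery adapted to positive characteristic, taking part (1) first because parts (2) and (3) lean on it.

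For (1), write $Q_\chi := U(\g)/U(\g)\{x-\chi(x)\mid x\in\g(<\!-1)\}$ (the generalized Gelfand--Graev module). I would give $Q_\chi$ the Kazhdan filtration induced from $U(\g)$. The key observation is that $x-\chi(x)$, for $x\in \g(<\!-1)$, has symbol $x-\chi(x)$, and these are Kazhdan homogeneous of degree $\le 0$. Hence $\gr Q_\chi\cong \k[\mu^{-1}(\chi)]$, by a regular-sequence/flatness argument using a PBW basis adapted to $\g(<\!-1)\oplus\g(\ge -1)$. Taking $G(<\!0)$-invariants and comparing with Lemma~\ref{L:sliceproperties}(1) requires:
\begin{itemize}
\item the action map $G(<\!0)\times\S_\chi\to\mu^{-1}(\chi)$ to be an isomorphism; this follows from the contracting $\gamma_e$-action together with the transversality in Lemma~\ref{L:sliceproperties}(3);
\item $H^1(G(<\!0),\k[\mu^{-1}(\chi)])=0$, so that invariants commute with $\gr$; this holds because $\k[\mu^{-1}(\chi)]\cong \k[G(<\!0)]\otimes\k[\S_\chi]$ is an injective $G(<\!0)$-module.
\end{itemize}
Positivity of the filtration then follows from positivity of the grading on $\k[\S_\chi]$, and compatibility with the Poisson brackets follows because both sides are built by the same reduction.

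For (2), I would use that $Z_p(\g)$ acts on $Q_\chi$, and pass to $p$-characters on the slice. The ideal $U(\g)I_{\S_\chi}$ restricts the $p$-character to $\S_\chi$, and combined with the fact that the $p$-th power elements from $\g(<\!-1)$ act via $\chi$ (which lies in the slice directions complementary to $\v$), one gets that $Q_{\S_\chi}:=Q_\chi/Q_\chi I_{\S_\chi}$ is a projective generator. The argument goes through the restricted Lie algebra $\mathfrak m=\g(<\!-1)$, with the following steps.
\begin{enumerate}
\item $U_\chi(\mathfrak m)$ is local with one-dimensional simple module $\k_\chi$, because $\mathfrak m$ is unipotent.
\item $U_{\S_\chi}(\g)$ is free over $U_\chi(\mathfrak m)$, so Skryabin-type arguments identify $U_{\S_\chi}(\g)\cong \End_{U(\g,e)^{op}}(Q_{\S_\chi})$.
\item $Q_{\S_\chi}$ is free over $U(\g,e)$ of rank $p^{\dim\mathfrak m}=p^{\frac12 d_\chi}$, by a graded-dimension count using (1) and $\dim\mathfrak m=\frac12\dim\O_\chi$.
\end{enumerate}
Here the case $\mathfrak l=0$ (i.e. $\g(-1)$ not used) must be handled carefully: the relevant count is $\dim\g(<\!-1)+\frac12\dim\g(-1)$. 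I would therefore pass to the full $\g(<\!0)$ as in Premet's approach.

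Part (3) is then formal. Morita equivalence from (2) sends a one-dimensional $U(\g,e)$-module to a module of dimension $p^{\frac12 d_\chi}$ on which $Z_p(\g)$ acts through a point of $\S_\chi$. Conversely, such a module has the minimal possible dimension by \eqref{eq:Premetstheorem} and Lemma~\ref{L:sliceproperties}(4), so it corresponds to a one-dimensional module.

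The main obstacle is the freeness and rank computation in (2). This is precisely where the choice of isotropic subspace in $\g(-1)$ enters, and where positive-characteristic adjustments of \cite{GTmodular} are needed.
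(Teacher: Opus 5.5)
The paper gives no proof here; it quotes \cite[\textsection\textsection 7, 9]{GTmodular}. Your outline of (1), and the Morita deduction of (3) from (2), are along the right lines, but (2) has a genuine gap.

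\textbf{The gap in (2).} With the paper's convention ($\l=0$) your subalgebra is $\mathfrak m=\g(<\!-1)$. Since $\dim\g^e=\dim\g(0)+\dim\g(1)$, one gets $\dim\mathfrak m=\tfrac12 d_\chi-\tfrac12\dim\g(-1)$, so your step~3 is valid only when $\g(-1)=0$. This is not mere bookkeeping. $Q_{\S_\chi}$ has rank $p^{\dim\g-\dim\g(<-1)}$ over $\k[\S_\chi^{(1)}]$, while by (1) $U(\g,e)$ has rank $p^{\dim\g^e}$ over its $p$-centre. So even if $Q_{\S_\chi}$ is free over $U(\g,e)$, its rank is $p^{\dim\g(<0)}$. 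Then $\End_{U(\g,e)^{op}}(Q_{\S_\chi})\cong\Mat_{p^{\dim\g(<0)}}U(\g,e)$ has rank $p^{\dim\g+\dim\g(-1)}$ over $\k[\S_\chi^{(1)}]$, whereas $U_{\S_\chi}(\g)$ has rank $p^{\dim\g}$. Hence the identification in your step~2 fails whenever $\g(-1)\neq 0$.

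You do notice that the right count is $\dim\g(<\!-1)+\tfrac12\dim\g(-1)$, but the remedy you propose, passing to the full $\g(<\!0)$, does not work. The form $(x,y)\mapsto\chi([x,y])$ on $\g(-1)$ is non-degenerate: its radical lies in $\g^e\cap\g(-1)=0$. So $\chi$ is not a character of $\g(<\!0)$, and there is no $\k_\chi$, no local algebra $U_\chi(\g(<\!0))$ and no Gelfand--Graev module. In any case $\dim\g(<\!0)=\tfrac12 d_\chi+\tfrac12\dim\g(-1)$ overshoots.

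\textbf{How to repair it.} The missing idea is the standard choice (Premet) of a Lagrangian subspace $\l\subseteq\g(-1)$ for this form. Then $\mathfrak m_\l:=\l\oplus\g(<\!-1)$ has dimension exactly $\tfrac12 d_\chi$, and every $\eta\in\S_\chi$ restricts to the same character $\chi|_{\mathfrak m_\l}$ on it. Run the Morita argument with $U(\g)\otimes_{U(\mathfrak m_\l)}\k_\chi$ reduced over $\S_\chi^{(1)}$. The input that makes this a projective generator is that every $U_\eta(\g)$-module with $\eta\in\S_\chi$ is free over $U_\chi(\mathfrak m_\l)$. Freeness of the algebra $U_{\S_\chi}(\g)$ over $U_\chi(\mathfrak m)$, which is what your step~2 cites, is not enough. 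Finally, you must identify the resulting endomorphism algebra with the paper's $U(\g,e)$, which is defined with $\l=0$ and $G(<\!0)$-invariants. This is a modular analogue of Gan--Ginzburg's independence of the isotropic subspace.

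\textbf{A smaller point in (1).} Each Kazhdan piece $F_dQ_\chi$ is $G(<\!0)$-stable, but $G(<\!0)$ acts trivially on $F_dQ_\chi/F_{d-1}Q_\chi$. The coadjoint action on $\k[\mu^{-1}(\chi)]$ appears only as the associated graded of the coaction once $\k[G(<\!0)]$ is graded by $\lambda_e$-weights, as in (Q1) of Subsection~\ref{ss:gradedHamQuant}. So the step ``$H^1=0$ implies invariants commute with $\gr$'' cannot be done by the usual induction along the filtration. It has to be run on the Rees module with this twisted coaction, using that all gradings are bounded below.
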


 \subsection{Primitive ideals}
\label{ss:primitiveideals}

The primitive ideals of an associative algebra are the annihilators of its simple modules. We write $\Prim U(\g)$ for the set of primitive ideals in $U(\g)$, and $\Irr U(\g)$ for the isoclasses of simple $\g$-modules.

The following lemma is well-known, but we record a proof for the reader's convenience.
\begin{Lemma}
\label{L:simplesprimitivesmaximals}
\begin{enumerate}
    \item The natural map $\Irr U(\g) \to \Prim U(\g)$ is a bijection.
    \item The primitive ideals are the same as the maximal ideals in $U(\g)$.
\end{enumerate}
\end{Lemma}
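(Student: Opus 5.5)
The whole argument rests on the $p$-centre. Since $U(\g)$ is a free $Z_p(\g)$-module of rank $p^{\dim\g}$ (Section~\ref{ss:restrictedLiealgebras}), $U(\g)$ is a finite module over a central subalgebra, which is itself a finitely generated commutative $\k$-algebra. The first step is to show that every simple $U(\g)$-module $V$ is finite-dimensional.

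\begin{itemize}
\item Take $0\neq v\in V$. Then $V=U(\g)v$ is a finitely generated $Z_p(\g)$-module.
\item By Schur's lemma combined with the generic-flatness/Quillen argument, each element of $Z_p(\g)$ acts by a scalar. More directly, for any maximal ideal $\mathfrak m\unlhd Z_p(\g)$, the submodule $\mathfrak m V$ is $U(\g)$-stable because $Z_p(\g)$ is central.
\item By Nakayama's lemma (using $V\neq 0$ finitely generated over $Z_p(\g)$) we can choose $\mathfrak m$ with $\mathfrak m V\neq V$.
\item Simplicity then forces $\mathfrak m V=0$. Hence $V$ is a module over $U(\g)/U(\g)\mathfrak m$, and $\mathfrak m=I_\chi$ for some $\chi$ by \eqref{eq:idealIchi}.
\item So $V$ is a $U_\chi(\g)$-module, and it is finite-dimensional since $\dim U_\chi(\g)=p^{\dim\g}$.
\end{itemize}

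For part (2), let $I=\Ann V$ with $V$ simple and finite-dimensional. By Jacobson density (or Burnside's theorem, since $\k$ is algebraically closed and $\End_{U(\g)}(V)=\k$), the map $U(\g)\to\End_\k(V)$ is surjective. Therefore $U(\g)/I\cong\End_\k(V)$ is a simple algebra, and $I$ is maximal.

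Conversely, let $M$ be a maximal two-sided ideal. Then $M$ is contained in a maximal left ideal $L$, and $V=U(\g)/L$ is simple with $\Ann V\supseteq M$. Since $\Ann V$ is a proper two-sided ideal, maximality gives $\Ann V=M$. Thus primitive ideals coincide with maximal ideals.

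For part (1), surjectivity of $\Irr U(\g)\to\Prim U(\g)$ holds by definition. For injectivity, suppose $\Ann V=\Ann V'=I$. Then $U(\g)/I\cong\End_\k(V)$ is a matrix algebra. A matrix algebra has a unique simple module up to isomorphism, so $V\cong V'$.

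The only genuinely nontrivial input is the finite-dimensionality of simple modules. This is handled by the centrality of $Z_p(\g)$ and the finiteness of $U(\g)$ over it. The step needing care is choosing $\mathfrak m$ with $\mathfrak mV\neq V$. Since $V$ is a finitely generated module over the commutative Noetherian ring $Z_p(\g)$ with nonzero support, any $\mathfrak m$ in its support works, by Nakayama applied to the localisation $V_{\mathfrak m}\neq 0$.
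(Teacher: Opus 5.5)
Your proof is correct and follows the paper's route: Jacobson density gives $U(\g)/\Ann_{U(\g)} V\cong\End_\k(V)$, which yields uniqueness of the simple module for (1) and simplicity of primitive quotients for (2). The only differences are that you prove the finite-dimensionality of simple modules directly, via the $p$-centre and Nakayama's lemma, where the paper simply cites it. You also spell out the easy converse that maximal ideals are primitive, which the paper leaves implicit.
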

\begin{proof}
   Since every $V\in \Irr U(\g)$ is finite-dimensional (see \cite{JaLA}, for example), Jacobson's density theorem shows that $U(\g) / \Ann_{U(\g)} V \cong \End_\k(V)$ as $\k$-algebras and so, in particular, the quotient admits a unique simple module up to isomorphism. Therefore the map in (1) admits a left inverse, proving (1). The argument also shows that every primitive quotient is simple, hence (2).    
\end{proof}

Since $\k$ is algebraically closed, we may apply Schur's lemma to see that each simple module has a central character. 
This implies that, for $I \in \Prim U(\g)$, the intersection $I \cap Z_p(\g) \in \Spec Z_p(\g)$ is a  closed point.
For $X \subseteq \Spec Z_p(\g) = (\g^*)^{(1)}$ we write
$$\Prim_X U(\g) := \{ I \in \Prim U(\g) \mid I \cap Z_p(\g) \in X\},$$  and for $d \in \Z$ we write $$\Prim^d U(\g) := \{ I \in \Prim U(\g) \mid \codim_{U(\g)} I = d\},$$ and $$\Prim_X^d U(\g) := \Prim_X U(\g) \cap \Prim^d U(\g).$$

The bijection in Lemma~\ref{L:simplesprimitivesmaximals} restricts to
\begin{eqnarray*}
    \Prim_X U(\g)&\overset{1\text{-}1}{\longrightarrow } &\coprod_{\eta\in X}\Irr U_\eta(\g),\\
    \Prim^d U(\g)&\overset{1\text{-}1}{\longrightarrow } &\{L\in\Irr U(\g)\mid \dim(L)^2=d\},
\end{eqnarray*}
where the union runs over the closed points of $X$. It follows from Lemma~\ref{L:Walgebraproperties}(3) that there is a bijection
\begin{eqnarray}
\label{eq:1dimsandprimitives}
\E(\g,e) \overset{1\text{-}1}{\longrightarrow }\Prim_{\S_\chi}^{p^{\dim \O_\chi}} U(\g).
\end{eqnarray}

\section{Coadjoint centralisers}\label{s:centralisers}

Throughout this section we retain the assumptions from Section~\ref{S:Walgebrassmallmodules}, though from mid-Subsection~\ref{ss:sheets} onward we restrict our attention to $G=\GL_N$. Our immediate goal is to prove some basic results about coadjoint centralisers which will be used later in the paper.

\subsection{Degenerations of coadjoint centralisers}
Let $e\in \Nc(\g)$. We equip $\g = \bigoplus_i \g(i)$ with the Dynkin grading, and $\k[\g^*] = \bigoplus_i \k[\g^*]_i$ with the induced grading. We define the (ascending) Dynkin filtration on $\k[\g^*]$ by $\k[\g^*]_{\le d} := \bigoplus_{i\le d} \k[\g^*]_i$, and we transfer this to a filtration on $\k[\g]$ via the isomorphism $\kappa : \g \to \g^*$. This defines a quotient filtration on $\k[V]$ for any subspace $V \subseteq \g$.

We similarly define the (ascending) Dynkin filtration on $\g = \bigcup_{d\ge 0} \g(\le d)$ and give any subspace $V\subseteq \g$ and quotient space $\g/V$ the induced filtration.

\begin{Lemma}
\label{L:Liealgebracentraliserdegenerations}
    Let $x\in e + \g(\le 1)$ be such that $\dim \g^x = \dim \g^e$.
    Then $$\gr \k[\g^x] = \k[\g^e].$$
\end{Lemma}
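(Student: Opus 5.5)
The plan is to reduce the statement to a fact about subspaces of $\g$, and then prove that fact by a degeneration argument with the cocharacter $\lambda=\lambda_e$. Write $V:=\g^x\subseteq \g$. Since $\k[V]$ is the quotient of $\k[\g]$ by the ideal generated by the annihilator $V^\perp\subseteq \g^*$, and the filtration is finite and discrete on each graded piece, the first step is a lemma:
\[\gr \k[V]\cong \k[\gr V],\]
where $\gr V:=\bigoplus_i \big(V\cap\g(\le i)+\g(\le i-1)\big)/\g(\le i-1)$ is identified with a subspace of $\g=\bigoplus_i\g(i)$ by taking top homogeneous components. To prove it, choose a basis $v_1,\dots,v_m$ of $V$ in echelon form with respect to the Dynkin degrees. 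Such a filtered basis exists because $V$ is finite-dimensional and the induced filtration is discrete. Their top components $\bar v_1,\dots,\bar v_m$ are then linearly independent and span $\gr V$. Complete them to a homogeneous basis of $\g$ and pass to dual coordinates. The quotient filtration on $\k[V]=S(V^*)$ is then the one generated by the restricted coordinate functions in their degrees. Because the leading terms form a basis, $\k[V]$ admits a filtered PBW-type basis of monomials in these restricted coordinates, and its associated graded is the polynomial ring on $(\gr V)^*$, i.e. $\k[\gr V]$. This reduces the lemma to showing $\gr V=\g^e$.

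For that, consider $x_t:=t^{-2}\lambda(t)x$. Write $x=e+\sum_{i\le 1}x_i$ with $x_i\in\g(i)$. Using $e\in\g(2)$ we get
\[x_t=e+\sum_{i\le 1}t^{i-2}x_i\longrightarrow e \quad\text{as } t\to\infty.\]
Moreover $\g^{x_t}=\lambda(t)\g^x$, since scaling does not change the centraliser, so $\dim\g^{x_t}=\dim\g^e$ for all $t$. In the Grassmannian $\mathrm{Gr}(m,\g)$ with $m=\dim\g^e$, the limit of $\lambda(t)V$ as $t\to\infty$ is exactly $\gr V$. Indeed, $\lambda(t)v_j=t^{d_j}\big(\bar v_j+O(t^{-1})\big)$, where $d_j$ is the top degree of $v_j$, and the $\bar v_j$ are independent; this is precisely where the echelon basis is used. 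On the other hand, the incidence set $\{(y,U)\mid [y,U]=0\}\subseteq \g\times\mathrm{Gr}(m,\g)$ is closed, so the limit $\gr V$ of $\g^{x_t}$ centralises the limit $e$ of $x_t$. Hence $\gr V\subseteq\g^e$, and equality follows from $\dim\gr V=\dim V=\dim\g^e$.

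I expect the main obstacle to be the bookkeeping in the first step: matching the paper's conventions, where the filtration on $\k[\g]$ is transported from $\k[\g^*]$ via $\kappa$, with the Dynkin filtration on the subspace $\g^x$, and checking that the quotient filtration on $\k[\g^x]$ really has associated graded $\k[\gr\g^x]$ rather than something coarser. I would handle this by working throughout with the explicit echelon basis, and verifying the filtered-basis property for monomials directly via \cite[Lemma~I.8.2]{NVO}-style arguments. Once that identification is in place, the degeneration step is a clean closedness-plus-dimension argument.
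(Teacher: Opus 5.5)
Your proof is correct, but it is organised differently from the paper's.

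\textbf{The paper's route.} The paper works with the image of $\ad(x)$ rather than its kernel. It notes that $I_{\g^x}$ is generated by $\kappa[\g,x]=(\g^x)^\perp$. It then observes that $\ad(x)$ is filtered (with a shift of $2$) and satisfies $\gr(\ad x)=\ad e$. This gives $[\g,e]\subseteq\gr[\g,x]$, and equality follows from the dimension hypothesis, so $\kappa[\g,e]\subseteq\gr I_{\g^x}$. That only yields a surjection $\k[\g^e]\onto\gr\k[\g^x]$. The paper upgrades it to an isomorphism by comparing Krull dimensions and using that $\k[\g^e]$ is a polynomial ring.

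\textbf{Your route.} You first prove the exact identity $\gr\k[V]=\k[\gr V]$ for an arbitrary subspace $V$, and then identify $\gr\g^x$ with $\g^e$. Your echelon-basis argument is sound. Your conventions also match the paper's: the quotient filtration on $V^*$ is dual to the ascending filtration $V\cap\g(\le k)$. Hence symbols of linear functions correspond to top Dynkin components, which are exactly what survives as $t\to\infty$.

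\textbf{Comparison.} Your route makes the last step pure linear algebra. You never need the invariance of Krull dimension under $\gr$ for this non-discrete $\Z$-filtration. The paper's route buys brevity.

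Your Grassmannian degeneration is heavier than needed. Take $v\in\g^x$ with top component $\bar v\in\g(d)$. The $\g(d+2)$-component of $0=[x,v]$ is $[e,\bar v]$, which gives $\gr\g^x\subseteq\g^e$ in one line. This is precisely the paper's observation $\gr(\ad x)=\ad e$, applied to the kernel instead of the image.
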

\begin{proof}

    Since $\bar\kappa$ is $G$-invariant it is $\g$-invariant, and so $\bar\kappa([\g,x], \g^x) = 0$, implying $\kappa[\g,x] = (\g^x)^\perp$. If $(\kappa[\g,x]) \unlhd \k[\g]$ denotes the ideal generated by $\kappa[\g,x]$ then it follows that $(\kappa[\g,x]) = I_{\g^x}$ is the vanishing ideal of $\g^x$.

    Consider now the map $\ad(x):\g\to\g$; it is filtered so long as we shift the filtration on the codomain by $2$. It is easy to see that $\gr(\ad(x))=\ad(e):\g\to\g$. In particular, $[\g,e]=\im(\gr(\ad(x))\subseteq \gr(\im(\ad(x))=\gr[\g,x]$. The assumption $\dim\g^x=\dim\g^e$ implies $\dim[\g,e]=\dim[\g,x]=\dim\gr[\g,x]$, and thus $\gr[\g,x]=[\g,e]$.

    Since $\kappa$ respects the Dynkin grading, we have $\kappa[\g,e] = \kappa\gr [\g,x] = \gr \kappa [\g,x] \subseteq \gr I_{\g^x}$ which implies $\k[\g^e] \onto \gr \k[\g^x]$. The surjection is an equality because both algebras have the same Krull dimension and $\k[\g^e]$ is reduced.
\end{proof}

\subsection{Sheets}\label{ss:sheets}

The {\it $k$th rank stratum} of $\g^*$ is the subset $\{\eta \in \g^* \mid \dim \O_\eta = k\}$, and the {\em sheets} of $\g^*$ are the irreducible components of the rank strata. They were studied extensively in \cite{PS}, following Borho's famous treatise in characteristic zero \cite{Bo}. Note that both of these cited works study sheets of Lie algebras of arbitrary reductive groups, whilst we mostly focus on the case $G = \GL_N$.

Every sheet is locally closed and contains a unique nilpotent orbit \cite[Proposition~2.5]{PS}. In fact, for $\gl_N$, this gives a bijection between the sheets and nilpotent orbits. For a proof see \cite[Proposition~2.3]{Bon} and the remarks which follow (he attributes the first proof to Dale Peterson's PhD thesis). We record the key fact for later reference.

\begin{Lemma}
For $G = \GL_N$ and $\O_\chi \subseteq \Nc(\g^*)$ there is a unique sheet containing $\O_\chi$.
\end{Lemma}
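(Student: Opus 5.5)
Every sheet contains some nilpotent orbit, so I only need to show that a nilpotent orbit $\O_\chi$ of $\GL_N$ cannot lie in two different sheets. The plan is to describe all sheets of $\gl_N^*$ explicitly as decomposition classes, in the style of Borho and Premet--Stewart \cite{Bo, PS}. Throughout, I use $\kappa$ to work in $\g = \gl_N$.

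\textbf{Step 1: a parametrisation of sheets.} For each Levi subalgebra $\l$ (up to conjugacy) define
$$D(\l) = G\cdot(\z(\l)^{\mathrm{reg}} + \Nc(\l)_{\mathrm{reg}}).$$
Here $\z(\l)^{\mathrm{reg}}$ is the set of central elements whose centraliser is exactly $\l$. For $\gl_N$, the nilpotent centraliser $\l^n$ of a regular nilpotent $n\in\l$ in $\l$ has the same dimension as $\z(\l)$. From this, $\dim \g^x = \dim\l$ for every $x\in D(\l)$; I will compute this via the Jordan decomposition, which is available since $p$ is good. I claim the sheets are exactly the closures $\overline{D(\l)}^{\mathrm{rank}}$ of these classes within the rank stratum $\dim\g^x=\dim\l$. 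The argument is the standard one: $\g$ is a finite union of the irreducible sets $D(\l)$, so every sheet is the rank-stratum closure of one of them. Maximality follows from the fact that no $D(\l)$ lies in the closure of a different $D(\l')$ with the same centraliser dimension. To see this, note that the semisimple parts in the closure of $D(\l')$ have centralisers containing conjugates of $\l'$. Equality of dimensions then forces conjugacy of $\l$ and $\l'$.

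\textbf{Step 2: identifying the nilpotent orbit.} The nilpotent orbit contained in the sheet indexed by $\l$ is the Richardson orbit $\operatorname{Ind}_\l^\g(0)$, obtained as the dense orbit in $G\cdot\mathfrak n$, where $\mathfrak p = \l\oplus\mathfrak n$ is a parabolic subalgebra. I will check this by taking limits $t\to 0$ of $t z + n$ inside $\mathfrak p$, with $n$ regular nilpotent in $\l$; this lands in $\overline{G\cdot \mathfrak n}$. Comparing dimensions then shows that the limit orbit is the Richardson orbit.

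\textbf{Step 3: injectivity.} For $\gl_N$, if $\l \cong \gl_{a_1}\times\cdots\times\gl_{a_k}$, then the Richardson orbit has partition equal to the transpose of the sorted composition $(a_i)$. This is a classical computation via the dimension of $G\cdot\mathfrak n$ and the dual partition, and it is also the source of the $G_0$ appearing in Subsection~\ref{ss:statementofresults}. Conjugacy classes of Levis correspond to multisets $\{a_i\}$, i.e. to partitions, so the assignment $\l\mapsto\operatorname{Ind}_\l^\g(0)$ is injective. Hence each nilpotent orbit lies in exactly one sheet.

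\textbf{Main obstacle.} I expect the main difficulty to be Step 1 in positive characteristic. The proof needs that the nilpotent elements in the closure of $D(\l)$ with the right centraliser dimension form exactly one orbit, and that there are no extra degenerations. Here I would rely on \cite[Proposition~2.5]{PS} together with the smoothness of centralisers \eqref{L:smoothstabilisers}. Alternatively, I could simply cite \cite[Proposition~2.3]{Bon}, whose argument is characteristic-free for $\GL_N$.
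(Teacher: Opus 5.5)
\textbf{There is a genuine gap in Step~1, and Step~2 uses the wrong degeneration.} The paper gives no argument of its own for this lemma; it simply quotes \cite[Proposition~2.3]{Bon}, which is your fallback. Your self-contained route, however, does not go through as written.

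\textbf{Step~1.} With $\Nc(\l)_{\mathrm{reg}}$ in the definition, an element $x=z+n\in D(\l)$ has $\g^x=\l^n$ of dimension $N=\operatorname{rank}\l$, not $\dim\l$. Your claim $\dim\l^n=\dim\z(\l)$ fails unless $\l=\t$; for $\l=\g$ it says $N=1$. So all your $D(\l)$ lie in the regular sheet. What you want are the semisimple classes $G\cdot\z(\l)^{\mathrm{reg}}$. But under either definition the assertion that $\g$ is the finite union of the $D(\l)$ is false, since the union consists only of semisimple (resp.\ regular) elements.

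What is true is that $\g$ is the finite union of all decomposition classes $G\cdot(\z(\mathfrak m)^{\mathrm{reg}}+\O_{\mathfrak m})$, with $\O_{\mathfrak m}$ an arbitrary nilpotent orbit of a Levi $\mathfrak m$. So every sheet is the rank-stratum closure of one of these. Reducing to $\O_{\mathfrak m}=0$ is the missing idea, and it is specific to type $\mathsf A$. Every nilpotent orbit of a Levi of $\gl_N$ is Richardson, i.e.\ $\O_{\mathfrak m}=\Ind_{\l}^{\mathfrak m}(0)$ for a Levi $\l\subseteq\mathfrak m$. Hence $z+n$, with $z\in\z(\mathfrak m)^{\mathrm{reg}}$ and $n\in\O_{\mathfrak m}$, is a limit of elements of $G\cdot\z(\l)^{\mathrm{reg}}$, and $\dim\g^{z+n}=\dim\mathfrak m^n=\dim\l$. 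So every sheet of $\gl_N$ is the rank-stratum closure of some $G\cdot\z(\l)^{\mathrm{reg}}$.

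Without this, Step~3 only shows that these particular (Dixmier) sheets contain pairwise distinct nilpotent orbits. It does not exclude $\O_\chi$ lying in a further sheet, which is exactly what can happen outside type $\mathsf A$. Your ``main obstacle'' is therefore misplaced. The uniqueness of the nilpotent orbit in a sheet is already provided by \cite[Proposition~2.5]{PS}; the real content of the lemma is that the rank strata are covered by Dixmier sheets.

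\textbf{Step~2.} Here $\lim_{t\to0}(tz+n)=n$ is regular nilpotent in $\l$, so its $G$-orbit has Jordan type $(a_1,\dots,a_k)$ rather than the transpose. It need not lie in $\overline{G\cdot\mathfrak n}$: for $\l=\gl_3\times\gl_1$ it has type $(3,1)$, which is not in the closure of the $(2,1,1)$-orbit, and for $\l=\t$ it is $0$.

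Use instead $tz+u\to u$ with $u\in\mathfrak n$, noting that $tz+u\in tz+\mathfrak n=P\cdot(tz)\subseteq G\cdot\z(\l)^{\mathrm{reg}}$ for $t\neq0$. In fact $\overline{G\cdot\z(\l)^{\mathrm{reg}}}=G\cdot(\z(\l)+\mathfrak n)$, by properness of $G\times^P(\z(\l)+\mathfrak n)\to\g$. Its nilpotent elements form $G\cdot\mathfrak n$, and only the dense Richardson orbit there has dimension $\dim\g-\dim\l$. This gives the unique nilpotent orbit of the sheet directly.

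With these two repairs, your Step~3 completes the proof: the Richardson orbit for $\gl_{a_1}\times\cdots\times\gl_{a_k}$ has the transposed partition, and conjugacy classes of Levis correspond to partitions.
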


From now on, assume $G=\GL_N$. For $\chi = \kappa(e) \in \Nc(\g^*)$ we write $\bbS_\chi$ for the unique sheet containing $\O_\chi$. The {\it Katsylo section of $\bbS_\chi$} is defined as
\begin{eqnarray}
\chi + X := \bbS_\chi \cap \S_\chi.
\end{eqnarray}

\begin{Lemma}
\label{L:dogwobble}
    \begin{enumerate}
        \item $\chi + X$ is the set of elements $\eta\in \S_\chi$ such that $\dim \O_\eta = \dim \O_\chi$. 
        \item $\Prim_{\S_\chi}^{p^{\dim \O_\chi}} U(\g)=\Prim_{\chi+X}^{p^{\dim \O_\chi}} U(\g).$
    \end{enumerate}
\end{Lemma}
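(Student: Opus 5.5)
For part (1), the plan is to show that every $\eta\in\S_\chi$ satisfies $\dim\O_\eta\ge\dim\O_\chi$, and then to identify $\bbS_\chi\cap\S_\chi$ with the minimal-dimension locus. The lower bound comes from Lemma~\ref{L:sliceproperties}(3). Since $\g^* = T_\eta\O_\eta + T_\eta\S_\chi$ and $\dim T_\eta\S_\chi = \dim\v = \dim\g^e = \dim\g - \dim\O_\chi$, we get $\dim\O_\eta \ge \dim\g - \dim\v = \dim\O_\chi$.

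Now let $d = \dim\O_\chi$. If $\eta\in\bbS_\chi\cap\S_\chi$, then $\eta$ lies in the rank stratum of dimension $d$, since $\bbS_\chi$ is a component of that stratum; so $\dim\O_\eta=d$. Conversely, suppose $\eta\in\S_\chi$ has $\dim\O_\eta = d$. Consider the contracting action $\gamma_e$. The orbit dimension is invariant under scalars and under $\lambda_e(t)\in G$, and $\gamma_e(t)$ is a composite of the two (via $\kappa$, up to the $t^2$ scaling). Hence the whole curve $\gamma_e(\k^\times)\eta$ lies in the $d$-th rank stratum. This curve is irreducible and its closure contains $\chi$. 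Since the rank stratum is locally closed and $\chi$ lies in it, the irreducible set $\overline{\gamma_e(\k^\times)\eta}$ lies in the stratum and meets $\O_\chi$.

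The remaining step is to conclude that the curve lies in the unique sheet through $\chi$. One option is to note that $G\cdot\overline{\gamma_e(\k^\times)\eta}$ is irreducible, lies in the stratum, and contains $\O_\chi$, so it lies in some irreducible component containing $\O_\chi$. By the preceding Lemma, that component is $\bbS_\chi$. Hence $\eta\in\bbS_\chi\cap\S_\chi=\chi+X$.

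For part (2), the inclusion $\supseteq$ is immediate since $\chi+X\subseteq\S_\chi$. For $\subseteq$, take $I\in\Prim_{\S_\chi}^{p^{\dim\O_\chi}}U(\g)$, corresponding to a simple $U_\eta(\g)$-module $L$ with $\eta\in\S_\chi$ and $\dim L = p^{\frac12\dim\O_\chi}$. By Premet's divisibility \eqref{eq:Premetstheorem}, $p^{\dim\O_\eta}$ divides $\dim L^2 = p^{\dim\O_\chi}$, so $\dim\O_\eta\le\dim\O_\chi$. Combined with the lower bound from part (1), this gives equality, and part (1) then places $\eta$ in $\chi+X$.

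I expect the main obstacle to be the step that puts $\eta$ in the specific sheet rather than just in the rank stratum. The argument relies on the locally closed stratum and on the uniqueness of the sheet through $\O_\chi$. Care is needed that the closure of the $\gamma_e$-orbit stays inside the stratum, and this holds because the only limit point added is $\chi$ itself.
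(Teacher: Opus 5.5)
Your proposal is correct and follows essentially the same route as the paper. For (1), the contracting Kazhdan action $\gamma_e$ gives an irreducible set $\overline{\gamma_e(\k^\times)\eta}=\gamma_e(\k^\times)\eta\cup\{\chi\}$ of constant orbit dimension through $\eta$ and $\chi$, which forces $\eta\in\bbS_\chi$ by uniqueness of the sheet. For (2), the slice lower bound $\dim\O_\eta\ge\dim\O_\chi$ together with Premet's divisibility \eqref{eq:Premetstheorem} forces equality. Your closing remark, that the only limit point added is $\chi$, is the actual reason the closure stays inside the rank stratum; local closedness of the stratum alone would not give this, so that should be the justification in the main argument.
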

\begin{proof}
For (1) it suffices to show that $\eta \in \S_\chi$ lies in $\bbS_\chi$ whenever $\dim \O_\eta = \dim \O_\chi$. Indeed, the contracting Kazhdan $\k^\times$-action $\gamma_e(\k^\times)$ from \eqref{eq:Kazhdanaction} has the property that $\chi \in \overline{\gamma_e(\k^\times) \eta}$. Then $\overline{\gamma_e(\k^\times) \eta}$ is an irreducible variety containing $\chi$ and $\eta$ over which all $G$-orbits have constant dimension. This shows that $\eta \in \bbS_\chi$.

Part (2) follows from part (1), along with Lemma~\ref{L:PremetSkryabin} and Lemma~\ref{L:sliceproperties}(4).
\end{proof}

We can now upgrade \eqref{eq:1dimsandprimitives} slightly using Lemma~\ref{L:dogwobble}. We record this for ease of reference.
\begin{eqnarray}
\label{eq:1dimsandprimitives2}
\E(\g,e) \overset{1\text{-}1}{\longrightarrow }\Prim_{\chi+X}^{p^{\dim \O_\chi}} U(\g)
\end{eqnarray}

\subsection{Katsylo's Theorem}
\label{ss:Katsylosection}
For this subsection, we fix $G=\GL_N$.

Recall that conjugating by $\lambda_e(\k^\times)$ induces a $\k^\times$-action on $\k[G]$, and we grade $\k[G]$ by the eigenspaces. This induces a filtration $\k[G] = \bigcup_{d\in \Z} \k[G]_{\le d}$ by $\k[G]_{\le d} = \bigoplus_{i\le d} \k[G]_i$. In Subsection~\ref{ss:gradedHamactions} we denoted this filtration by $\k[G]^{\lambda_e}$, but we omit the superscript in this subsection for ease of notation.
\begin{Lemma}
\label{L:coneisGchi}
    For $\eta \in \chi + X$ we have $$\gr \k[G^\eta] = \k[G^\chi].$$
\end{Lemma}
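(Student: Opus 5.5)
For $G=\GL_N$ the centralisers are unit groups: $G^\eta = (\g^\eta)^\times$, where $\g^\eta$ means $\g^x$ for $x=\kappa^{-1}(\eta)$. Indeed $G^x=\{g\in\GL_N\mid gx=xg\}$, and $\g^x$ is an associative subalgebra of $\Mat_N$. So $G^\eta$ is the principal open subset of the affine space $\g^x$ defined by the nonvanishing of $\det|_{\g^x}$, and
$$\k[G^\eta]=\k[\g^x][\det^{-1}].$$
The plan is to deduce the statement from Lemma~\ref{L:Liealgebracentraliserdegenerations} by localising at the determinant.

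First I would check the hypotheses of Lemma~\ref{L:Liealgebracentraliserdegenerations} for $x=\kappa^{-1}(\eta)$. Since $\eta\in\chi+X\subseteq \S_\chi=\chi+\kappa(\v)$ and $\v\subseteq\g(\le 0)$, we get $x\in e+\g(\le 0)\subseteq e+\g(\le 1)$. By Lemma~\ref{L:dogwobble}(1), $\dim\O_\eta=\dim\O_\chi$, so $\dim\g^x=\dim\g^e$ by \eqref{L:smoothstabilisers}. Hence $\gr\k[\g^x]=\k[\g^e]$ for the Dynkin filtration.

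Next I would compare filtrations. The filtration on $\k[G]$ is by $\lambda_e$-conjugation weights. On $\k[\Mat_N]$, the coordinate functions dual to $\g(i)$ have weight $-i$, or $i$ after the identification via $\kappa$. This is exactly the Dynkin filtration transported through $\kappa$, up to a sign convention that I would fix consistently. Because $\lambda_e$ fixes $\det$, $\det$ has degree $0$ and $\k[G]=\k[\Mat_N][\det^{-1}]$ is filtered compatibly. The quotient filtration on $\k[G^\eta]=\k[G]/I$ is then the localisation of the quotient filtration on $\k[\g^x]$ at the image of $\det$.

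The key step, and the main obstacle, is showing that $\gr$ commutes with this localisation. The symbol of $\det|_{\g^x}$ should be $\det|_{\g^e}$: the restriction $\det|_{\g^x}$ has filtration degree $\le 0$, and its degree-$0$ part is obtained by the same degeneration that gives $\gr\k[\g^x]=\k[\g^e]$, i.e.\ $\gr$ of the restriction map $\k[\Mat_N]\to\k[\g^x]$ is the restriction map to $\g^e$. The image of $\det$ in $\k[\g^e]$ is $\det|_{\g^e}$, which is nonzero since $1\in\g^e$, and it is not a zero-divisor because $\k[\g^e]$ is a domain. Multiplication by $\det$ is therefore injective on $\gr$, so the filtration on $\k[\g^x]$ is strictly compatible with multiplication by $\det$, and hence
$$\gr\bigl(\k[\g^x][\det^{-1}]\bigr)=\gr\k[\g^x][\overline{\det}^{-1}]=\k[\g^e][\det^{-1}]=\k[G^\chi].$$
Separately one must confirm that the filtration on the localisation is exhaustive and separated even though it is not discrete. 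I would handle this by writing each element as $f\det^{-n}$ and using that $\det$ has a nonzerodivisor symbol of degree $0$. An alternative is to argue via surjectivity $\k[G^\chi]\onto\gr\k[G^\eta]$ and then compare dimensions, using reducedness and irreducibility of $G^\chi$ as in the proof of Lemma~\ref{L:Liealgebracentraliserdegenerations}.
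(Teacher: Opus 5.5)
Your proposal is correct and follows essentially the same route as the paper: write $\k[G^\eta]$ as the localisation of $\k[\g^x]$ at the determinant, apply Lemma~\ref{L:Liealgebracentraliserdegenerations} to get $\gr\k[\g^x]=\k[\g^e]$, and use that $\gr$ commutes with localisation at an element whose symbol is a non-zero-divisor in the domain $\gr\k[\g^x]$ (the paper states this general fact and leaves its proof to the reader). Your additional checks fill in exactly the details the paper omits: $\det$ has degree $0$ with nonzero symbol $\det|_{\g^e}$ (because $1\in\g^e$), the quotient filtration on $\k[G^\eta]$ agrees with the localised filtration, and the localised filtration is separated.
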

\begin{proof}
    Let $\delta\in \k[\g]$ be the determinant function and $\k[\g]_\delta$ the localisation, so that $\k[\g]_\delta = \k[G]$. For $x\in \g$ we shall abuse notation to also write $\delta$ for the restriction to $\g^x$, so that $\k[\g^x]_\delta = \k[G^x]$. The inclusion $\k[\g^x] \subseteq \k[G^x]$ is filtered with respect to the filtration on $\k[G]$ described above, and the Dynkin filtration on $\k[\g]$ studied in Lemma~\ref{L:Liealgebracentraliserdegenerations}.

    Note that $\eta \in \chi + X$ corresponds, under the isomorphism $\kappa : \g \to \g^*$, to an element $x\in e + \g(\le 1)$ with $\dim \g^e = \dim \g^x$. Therefore, Lemma~\ref{L:Liealgebracentraliserdegenerations} implies $\gr \k[\g^\eta] = \k[\g^\chi]$.

    To complete the proof, we invoke a general fact, for which we leave the proof to the reader. Let $\B = \bigcup_{i\in \Z} \B_i$ be a $\Z$-filtered algebra such that $\gr \B$ is an integral domain, $1\in \B_0 \setminus \B_{-1}$, and $\delta \in \B_d\setminus \B_{d-1}$ for some $d \in \Z$. Write $\bar \delta = \delta + \B_{d-1}$. Then $\gr(\B_\delta) \cong (\gr \B)_{\bar \delta}$.
\end{proof}

By identifying $\k[G/G^\eta]$ (resp. $\k[G/G^\chi]$) with $\k[G]^{G^\eta}$ (resp. $\k[G]^{G^\chi}$), we may equip it with the induced filtration (resp. grading) from $\k[G]$.

\begin{Corollary}
    \label{C:gradedHopfcentraliser}
        For $\eta \in \chi + X$ we have $\gr \k[G / G^\eta]\subseteq \k[G / G^\chi]$.
    \end{Corollary}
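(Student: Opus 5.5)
The plan is to realise $\k[G/G^\eta]$ and $\k[G/G^\chi]$ as invariants of coactions, and then pass the invariance condition to associated graded objects using Lemma~\ref{L:coneisGchi}.

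Write $\pi_\eta:\k[G]\onto\k[G^\eta]$ and $\pi_\chi:\k[G]\onto\k[G^\chi]$ for the restriction maps, and $\Delta:\k[G]\to\k[G]\otimes\k[G]$ for the comultiplication. Then
\[
\k[G]^{G^\eta}=\{f\in \k[G] \mid (\Id\otimes\pi_\eta)\Delta(f)=f\otimes 1\}.
\]
The filtration on $\k[G]$ is by $\lambda_e(\k^\times)$-eigenvalues under conjugation. Conjugation by $\lambda_e(t)$ is a group automorphism, so $\Delta$ is a graded map $\k[G]\to\k[G]\otimes\k[G]$ for the tensor grading, and in particular it is filtered.

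Next, $\pi_\eta$ is filtered when $\k[G^\eta]$ carries the quotient filtration used in Lemma~\ref{L:coneisGchi}. By that lemma, $\gr\pi_\eta$ is identified with $\pi_\chi:\k[G]\to\k[G^\chi]$. This identification holds because $\gr\ker\pi_\eta$ is the defining ideal of $G^\chi$, which is exactly what the equality $\gr\k[G^\eta]=\k[G^\chi]$ encodes in the proof via the Dynkin degeneration. One caveat: $G^\chi$ is $\lambda_e$-stable since $\lambda_e$ fixes $\chi$ up to scalar, so $\pi_\chi$ is graded. Since $\k[G]$ admits a filtered (indeed graded) basis, we have $\gr(\k[G]\otimes\k[G^\eta])=\k[G]\otimes\k[G^\chi]$. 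The associated graded of $(\Id\otimes\pi_\eta)\Delta$ is therefore $(\Id\otimes\pi_\chi)\Delta$.

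Now take $f\in\k[G]^{G^\eta}$ of degree $d$, with symbol $\bar f$, which is its top homogeneous component. The identity $(\Id\otimes\pi_\eta)\Delta(f)=f\otimes1$ lives in filtered degree $d$. Taking the degree-$d$ symbol of both sides gives $(\Id\otimes\pi_\chi)\Delta(\bar f)=\bar f\otimes 1$. Here we use that $1\in\k[G^\eta]$ has degree $0$, so $f\otimes 1$ has symbol $\bar f\otimes 1\neq0$. Hence $\bar f\in\k[G]^{G^\chi}$. Since $\gr\k[G/G^\eta]$ is spanned by such symbols inside $\gr\k[G]=\k[G]$, the inclusion follows.

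The main obstacle is the bookkeeping of the filtration on the tensor product. One must make sure that taking the top-degree symbol commutes with applying $(\Id\otimes\pi_\eta)\Delta$, i.e. that this map is strict enough that the degree-$d$ part of the image is computed by the graded map. The potential failure is that $(\Id\otimes\pi_\chi)\Delta(\bar f)$ could vanish and the degree-$d$ symbol equation become vacuous. It is not vacuous, because the right-hand side $\bar f\otimes 1$ is nonzero in degree $d$ and $\gr$ of a filtered map applied to an element of degree $\le d$ is computed by the degree-$d$ components. I would verify this via the tensor product isomorphism \eqref{eq:gradedandtensors}, which holds since $\k[G]$ has a filtered basis.
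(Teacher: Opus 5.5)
Your proposal is correct and follows essentially the same route as the paper. The paper writes the invariance condition as $\Delta(m)-m\otimes 1\in\k[G]\otimes I_{G^\eta}$ and takes symbols. It uses exactly your ingredients: $\Delta$ is graded, $\k[G]$ has a filtered basis, and $\gr I_{G^\eta}=I_{G^\chi}$ by Lemma~\ref{L:coneisGchi}. Your version is the same argument expressed through the restriction map, as $(\Id\otimes\pi_\eta)\Delta(f)=f\otimes 1$. Your worry about the symbol equation being vacuous is unnecessary: the identity of degree-$d$ classes holds in any case, and if the degree-$d$ component of $\Delta(f)-f\otimes 1$ vanished, the required membership would hold trivially.
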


\begin{proof}
    Let $I_{G^\chi}, I_{G^\eta} \subseteq \k[G]$ denote the defining ideals of $\k[G^\chi]$ and $\k[G^\eta]$ respectively. By Lemma~\ref{L:coneisGchi} we have $\gr I_{G^\eta} = I_{G^\chi}$.

    Let $\Delta$ denote the coproduct on $\k[G]$. It follows from the definition (see \cite[I.2.10(2)]{JanRAGS}, for example) that  $m \in \k[G]^{G^\eta}$ if and only if $\Delta(m) - m\otimes 1 \in \k[G] \otimes I_{G^\eta}$. 
    
    The map $\Delta$ is graded with respect to the $\lambda_e$-eigenspace grading and therefore it is strictly filtered: if $m \in \k[G]_{\leq i} \setminus \k[G]_{\leq i-1}$ then $\Delta(m) \in (\k[G] \otimes \k[G])_{\leq i} \setminus (\k[G] \otimes \k[G])_{\leq i-1}$.

    From the previous paragraph it follows that if $m \in \k[G]_{\le d}$ and $\bar m = m + \k[G]_{\le d-1}$ then $$\Delta(\bar m) - \bar m \otimes 1= \gr ( \Delta(m) - m \otimes 1)   \in \k[G] \otimes \gr (I_{G^\eta}) = \k[G] \otimes I_{G^\chi}.$$
    Whence $\bar m \in \k[G]^{G^\chi}$. Note that, at this final step, we use that fact that $\k[G]$ has a filtered basis to deduce $\gr(\k[G] \otimes I_{\g^\eta}) = \gr \k[G] \otimes \gr I_{G^\eta}$ (cf. Section~\ref{ss:filteredgradedspaces}). 
\end{proof}

The following result is a particular case of a theorem of Katsylo \cite{Kat} which was used to describe geometric quotients of sheets. Im Hof \cite{Im} reproved the theorem in characteristic zero, in an algebro-geometric framework. Here we provide a new, algebraic proof which works for $\GL_N$ in all characteristics.
\begin{Theorem}
\label{T:Katsylo}
   If $\O \subseteq \g^*$ is a coadjoint orbit with:
   \begin{enumerate}
       \item[(i)] $\O \cap \S_\chi \ne \emptyset$;
       \item[(ii)] $\dim \O = \dim \O_\chi$.
   \end{enumerate}
   Then the intersection $\O \cap \S_\chi$ is (scheme-theoretically) a reduced point.
\end{Theorem}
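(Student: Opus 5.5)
Fix $\eta \in \O \cap \S_\chi$. By Lemma~\ref{L:dogwobble}(1) and hypothesis (ii), $\eta \in \chi + X$. My plan is to show that the scheme-theoretic intersection $\O_\eta \cap \S_\chi$ is a reduced point at $\eta$, and that no other point of the slice lies in $\O_\eta$. I would use the identification $\O_\eta \cong G/G^\eta$ from \eqref{eq:quotientisorbit}, together with the degeneration results just proved: Lemma~\ref{L:coneisGchi} and Corollary~\ref{C:gradedHopfcentraliser}.

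The first step is local and concerns reducedness. By Lemma~\ref{L:sliceproperties}(3), $T_\eta\O_\eta + T_\eta \S_\chi = \g^*$. Since $\dim \O_\eta = \dim \O_\chi = \codim \S_\chi$, this sum is direct, so $\O_\eta$ and $\S_\chi$ meet transversally at $\eta$. Hence every point of the intersection is an isolated reduced point. This is consistent with Lemma~\ref{L:sliceproperties}(4), which gives dimension $0$. It remains to show that the intersection is a single point.

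The second step proves uniqueness by degeneration, and I expect it to be the main obstacle. The action map $a : G \times \S_\chi \to \g^*$ is the natural tool. The fibre of $a$ over $\eta$ is $\{(g,\eta') : g\cdot \eta' = \eta\}$, which is a disjoint union of cosets $gG^{\eta'}$, one for each point $\eta' \in \O_\eta\cap\S_\chi$. Rather than count these cosets directly, I would work with the morphism $\O_\eta\cap \S_\chi \hookrightarrow \O_\eta = G/G^\eta$ and pass to coordinate rings. The intersection is cut out of $\k[G/G^\eta] = \k[G]^{G^\eta}$ by the ideal generated by the equations of $\S_\chi$, namely $\kappa([\g,e])$ shifted by $\chi$.

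The filtration used in Corollary~\ref{C:gradedHopfcentraliser} is built from $\lambda_e$, and it matches the Kazhdan grading on $\g^*$ up to the factor $t^2$. Taking associated graded objects should therefore give a surjection from $\gr\big(\k[\O_\eta\cap\S_\chi]\big)$ onto a quotient of $\k[G/G^\chi]/(\text{slice equations})$. The latter ring is $\k[\O_\chi\cap\S_\chi] = \k$, since $\O_\chi \cap \S_\chi = \{\chi\}$ transversally. I would need to check the direction of this comparison: the inclusion $\gr \k[G/G^\eta] \subseteq \k[G/G^\chi]$ goes the wrong way for a naive surjection.

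To get around this, I would first try comparing dimensions. By Lemma~\ref{L:coneisGchi}, $\gr\k[G^\eta]=\k[G^\chi]$, so the families $G/G^{\eta_t}$ should form a flat degeneration over the $\gamma_e(\k^\times)$-orbit closure of $\eta$ in $\chi+X$. The intersection with $\S_\chi$ is finite over this curve with generic fibre $\O_\eta\cap\S_\chi$ and special fibre $\{\chi\}$, of length $1$. The plan is to establish flatness, or at least upper semicontinuity of the fibre length, using the Kazhdan $\k^\times$-equivariance and the Rees construction on $\k[G]^{G^\eta}$ relative to the filtration. Then the length of $\O_\eta\cap\S_\chi$ is at most $1$, so it is exactly one reduced point. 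I expect making this semicontinuity argument rigorous, in particular properness or finiteness of the family near $\chi$, to be the hardest part. A fallback is a direct argument with $\GL_N$ Jordan forms on the Katsylo section.
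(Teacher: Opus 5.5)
\textbf{Step 1.} Your first step is sound, and it differs from the paper. Lemma~\ref{L:sliceproperties}(3) together with $\dim\O=\dim\O_\chi=\codim\S_\chi$ gives $T_{\eta'}\O\cap T_{\eta'}\S_\chi=0$ at every $\eta'\in\O\cap\S_\chi$. So the intersection is a finite set of reduced points. The paper never separates reducedness from uniqueness: it shows directly that $\k[G/G^\eta]\otimes_{\k[\g^*]}\k[\S_\chi]$ is one-dimensional.

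\textbf{Step 2: the gap.} The genuine gap is your second step, which carries all the content and is left as a plan. As written it begs the question. Let $m$ be the number of points of $\O\cap\S_\chi$. The $m$ contracting curves $t\mapsto\gamma_e(t)\eta'$ automatically form a finite flat family over the line, whose special fibre is a length-$m$ scheme supported at $\chi$. So ``special fibre $\{\chi\}$, of length $1$'' is just the statement $m=1$.

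Semicontinuity only helps once you know this limit lies scheme-theoretically inside $\O_\chi\cap\S_\chi$. That means showing the degeneration of $\O$ is the reduced $\O_\chi$ near $\chi$, and that no intersection point escapes from the family of homogeneous spaces $G/G^{\gamma_e(t)\eta}$ as the curves run into $\chi$. Lemma~\ref{L:coneisGchi} makes that family flat, even smooth, over the line. It says nothing about its finiteness over $\mathbb{A}^1\times\g^*$ at the point over $\chi$, and you give no argument for that finiteness.

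\textbf{What the paper does.} The paper avoids families. It filters $\k[G/G^\eta]\otimes_{\k[\g^*]}\k[\S_\chi]$ using the $\lambda_e$-weights on $\k[G]$ and the Kazhdan grading; the orbit map at $\eta$ is filtered because $\eta\in\chi+X$. It then bounds the associated graded by $\k[G/G^\chi]\otimes_{\k[\g^*]}\k[\S_\chi]=\k$, via Corollary~\ref{C:gradedHopfcentraliser} and \eqref{eq:Katsylomaps}.

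\textbf{Your direction worry.} Your concern about the direction of $\gr\k[G/G^\eta]\subseteq\k[G/G^\chi]$ is justified, since $-\otimes_{\k[\g^*]}\k[\S_\chi]$ does not preserve injectivity. Normality resolves it. The associated graded of the orbit map $\k[\g^*]\to\k[G/G^\eta]$ is the orbit map at $\chi$. The latter is surjective onto $\k[\O_\chi]=\k[\cO_\chi]$ by Lemma~\ref{L:normalorbits}. Hence the inclusion is an equality, and $\gr$ of the intersection ring is a quotient of $\k$.

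\textbf{Separatedness.} The final passage from $\dim\gr\le 1$ to $\dim\le 1$ uses that the filtration induced on the intersection ring is separated. This is not automatic, because quotient filtrations of the $\lambda_e$-filtration on $\k[G]$ need not be separated. For example, take the quotient by the maximal ideal of a point where some negative-weight function is nonzero: powers of that function put $1$ in every filtration piece. This is precisely the algebraic form of your escaping-points problem, so either route must address it.

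A sufficient condition is that $\k[\g^*]\to\k[G/G^\eta]$ be strict for these filtrations. Then $\k[G/G^\eta]\cap\k[G]_{\le -1}$ lies in the ideal generated by the slice equations, because functions of negative Kazhdan degree vanish on $\S_\chi$.
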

\begin{proof}
    If $\eta \in \O$ then $G / G^\eta \hookrightarrow \g^* \supseteq \S_\chi$ thanks to \eqref{eq:quotientisorbit}, and so the intersection is equal to the fibre product: $\O \cap \S_\chi = (G/G^\eta) \times_{\g^*} \S_\chi$. We must demonstrate that $\k[G/G^\eta] \otimes_{\k[\g^*]} \k[\S_\chi]$ is a 1-dimensional vector space.

    If we let $\k^\times$ act on $G \times \g^*$ via $(g, \eta) \mapsto (\lambda_e(t) g \lambda_e(t)^{-1}, \gamma_e(t)\eta)$ and $\k^\times$ act on $\g^*$ via $\gamma_e$, 
    then the action map $G \times \g^* \to \g^*$ is $\k^\times$-equivariant. So the pullback $\k[\g^*] \to \k[G] \otimes \k[\g^*]$ is graded, hence also filtered.
    
    If $\eta \in \chi + X$ then the map $\k[\g^*] \to \k[G] \otimes \k[\g^*] \overset{\Id \otimes \operatorname{ev}_\eta}{\longtwoheadrightarrow} \k[G] \otimes \k = \k[G]$ is also filtered, where $\operatorname{ev}_\eta$ denotes evaluation at $\eta$. Then $\k[\g^*] \to \k[G]$ factors through $\k[G]^{G^\eta}$, and the map $\k[\g^*] \to \k[G]^{G^\eta}$ is the homomorphism associated to the left-hand factor of $G/ G^\eta \times_{\g^*} \S_\chi$.

    Now $\k[G/G^\eta] \otimes_{\k[\g^*]} \k[\S_\chi]$ is a filtered algebra, and we may consider the associated graded algebra. Since $\k[\S_\chi]$ is a graded algebra it has a filtered basis, and so by the remarks following \eqref{eq:gradedandtensors}, along with Corollary~\ref{C:gradedHopfcentraliser}, we have $$\gr(\k[G/G^\eta] \otimes_{\k} \k[\S_\chi]) = \gr(\k[G/G^\eta]) \otimes_{\k} \k[\S_\chi] \subseteq \k[G / G^\chi] \otimes_\k \k[\S_\chi].$$

    Descending to tensor products over $\k[\g^*]$ we obtain
    \begin{eqnarray}
    \label{eq:Katsylomaps}
        \gr(\k[G/G^\eta] \otimes_{\k[\g^*]} \k[\S_\chi]) \twoheadleftarrow \gr(\k[G/G^\eta]) \otimes_{\k[\g^*]} \k[\S_\chi] \subseteq \k[G / G^\chi] \otimes_{\k[\g^*]} \k[\S_\chi].
    \end{eqnarray}

    Now we have $\k[G / G^\chi] \otimes_{\k[\g^*]} \k[\S_\chi] = \k[\pi^{-1}(\chi)]$ where $\pi : G / G^\chi \to \O_\chi$. By \cite[§2.2 \& Theorem~2.5]{JaNO} we see that $\pi$ is an isomorphism, hence $\k[\pi^{-1}(\chi)] = \k$.
    
    Finally, by \eqref{eq:Katsylomaps} we conclude that $\gr(\k[G/G^\eta] \otimes_{\k[\g^*]} \k[\S_\chi])$ is a quotient of a subspace of a 1-dimensional vector space. Together with assumption (i) we deduce that $\dim \gr(\k[G/G^\eta] \otimes_{\k[\g^*]} \k[\S_\chi]) =  1$, whence $\dim \k[G/G^\eta] \otimes_{\k[\g^*]} \k[\S_\chi] = 1$ by \eqref{eq:Katsylomaps}. This concludes the proof.
\end{proof}

\section{The general linear Lie algebra}
\label{S:generallinearalgebra}

In this section, $G=\GL_N$ and $\g=\gl_N$ for some $N>0$. Our goal is to show that the variety $\E(\g,e)$ can be identified with the variety $\z^*/W(\g_0)_\bullet$ from the Main Theorem. 

Throughout this section we fix $n > 0$ and let $\blambda = (\blambda_1,\blambda_2,...,\blambda_n) \vdash N$ be a partition, ordered so that $0 < \blambda_1 \le \blambda_2 \le \cdots \le \blambda_n$.

\subsection{Pyramids and tableaux} \label{ss_pyrtab}

A {\em pyramid $\bpi$ of shape $\blambda$} is a diagram with $\blambda_1$ boxes on the top row, $\blambda_{2}$ boxes on the row beneath, and so forth. They should be arranged so that each box which is strictly above the bottom row lies directly above a box in the row beneath. The boxes in a pyramid should be numbered $1,\ldots,N$, increasing by increments of one along rows and increasing down columns. For example, there are precisely two pyramids of shape $(1,2)$, as depicted below.
\begin{equation}
\label{e:somepyramids}
\begin{array}{c}
\begin{picture}(24,24) \put(0,0){\line(1,0){24}}
\put(0,12){\line(1,0){24}} \put(0,24){\line(1,0){12}}
\put(0,0){\line(0,1){24}} \put(12,0){\line(0,1){24}}
\put(24,0){\line(0,1){12}} 
\put(3,14.5){\hbox{1}}
\put(3,2.5){\hbox{2}}
\put(15,2.5){\hbox{3}}
\end{picture}
\end{array}
,\:\:\:\:
\begin{array}{c}
\begin{picture}(24,24) \put(0,0){\line(1,0){24}}
\put(0,12){\line(1,0){24}} \put(12,24){\line(1,0){12}}
\put(0,0){\line(0,1){12}} \put(12,0){\line(0,1){24}}
\put(24,0){\line(0,1){24}} 
\put(15,14.5){\hbox{1}}
\put(3,2.5){\hbox{2}}
\put(15,2.5){\hbox{3}}
\end{picture}
\end{array}.
\end{equation}

The rows of $\bpi$ are labeled $1,2,...,n$ from top to bottom and the columns of $\bpi$ are labeled $1,2,...,\blambda_n$ from left to right.
We denote the height of the $i$-th column by $q_i$, and we write $\row(i)$ and $\col(i)$ to denote the row number and the column number of $i$ in $\bpi$.
For example, in the left-hand pyramid associated to $(1,2)$ above, we have $q_1 = 2, q_2 = 1$ and $\row(1) = \col(1) = 1$.
For each row $r = 1, \dots, n$ of $\bpi$, define the set of \textit{capitals} of $r$ as 
$$C_r := \{ i \mid \row(i) = r, q_{\col(i)}=n-r+1 \}$$
and we partition each set of boxes in the $r$-th row into $C_r$ and the complement $C_r^c$.
The left pyramid in the above example has capitals $C_1 = \{1\}$ and $C_2 = \{3\}$, while for the right one $C_1 = \{1\}$ and $C_2 = \{2\}$ holds.

We identify the Weyl group $W$ of $\g$ with the symmetric group on $N$ letters in the usual manner.
This acts faithfully on the set of boxes of $\bpi$ by permutations.
We introduce two subgroups which are relevant for our purposes.
For each pair $(i,j)$ with $1\le i,j\le \blambda_n$ satisfying $q_i = q_j$ we let $w_{(i;j)} \in W$ denote the element which swaps the $i$-th and $j$-th column of $\bpi$.
We call such an element a {\em column-swap}, and write $W_{\col} \subseteq W$ for the subgroup generated by column-swaps.
It is clear that $W_{\col}$ induces an action on set of the capitals $C_r$ for each row $r$ of $\bpi$.
We also introduce the subgroup $W_{\row} \subseteq W$ which is the parabolic subgroup of $W$ consisting of all permutations $w \in W$ such that $\row(w(i)) = \row(i)$ for all $1 \leq i \leq N$.

Fix now a choice of pyramid $\bpi$ for $\blambda$. A {\it tableau of shape $\bpi$} is a filling of the boxes of $\bpi$ with elements of $\k$.
The set of tableaux of shape $\bpi$ is denoted $\Tab_\k(\bpi)$.
If $\bA \in \Tab_\k(\bpi)$ then we write $a_i$ for the entry of the $i$-th box. 
The group $W$  acts on $\Tab_\k(\bpi)$ such that $w\in W$ maps the tableau $\bA$ to the tableau $w(\bA)$ with entries $a_{w(i)}, 1 \leq i \leq N$.

If  $\bA, \bA' \in \Tab_\k(\bpi)$  lie in the same $W_{\col}$-orbit (resp. in the same $W_{\row}$-orbit), we say that they are {\it column-swap equivalent} (resp. that they are {\it row equivalent}).
It is clear that if $\bA$ and $\bA'$ are column-swap equivalent then they are row-equivalent.
We now introduce a special subset of tableaux for which the converse implication also holds.

A tableau $\bA \in \Tab_\k(\bpi)$ is called {\it column connected} if $a_i = a_j + 1$ whenever the $i$-th box lies directly above the $j$-th box in $\bpi$. 
The subset of column connected tableaux will be denoted $\Tab^{\cc}_\k(\bpi)$; this subset is clearly $W_{\col}$-stable.

\begin{Lemma} \label{L:rowiffcol}
    Let $\bA, \bA' \in \Tab^{\cc}_\k(\bpi)$. Then $\bA$ and $\bA'$ are row-equivalent if and only if $\bA$ and $\bA'$ are column-swap equivalent. 
\end{Lemma}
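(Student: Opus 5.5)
One direction is immediate: $W_{\col} \subseteq W_{\row}$, since a column-swap $w_{(i;j)}$ only exchanges boxes lying in the same row. So column-swap equivalence implies row equivalence. The content of the lemma is the converse. I would prove it by induction on the number of rows $n$ of $\bpi$, peeling off the bottom row, which is the full row of length $\blambda_n$.

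Fix $\bA, \bA' \in \Tab^{\cc}_\k(\bpi)$ that are row equivalent. A column connected tableau is determined by the entries in its bottom row: the box in row $r$ of column $c$ has entry $b_c + (n-r)$, where $b_c$ is the bottom entry of column $c$ and $r$ ranges over $n-q_c+1,\dots,n$. So $\bA$ is encoded by the multiset of pairs $\{(q_c, b_c)\}_{c=1}^{\blambda_n}$, one pair for each column. A column-swap permutes columns of equal height. Hence $\bA$ and $\bA'$ are column-swap equivalent if and only if these multisets of pairs agree. The goal is therefore to recover the multiset $\{(q_c,b_c)\}$ from the row multisets alone, that is, from the multiset of entries in each row.

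The key step is an inductive recovery from the top of the pyramid. Row $r$ consists of the columns of height at least $n-r+1$. Its multiset of entries is
$$M_r = \{\, b_c + n - r \;\mid\; q_c \ge n-r+1 \,\}.$$
Shift each row down by setting $\tilde M_r := M_r - (n-r)$, so that $\tilde M_r = \{ b_c \mid q_c \ge n-r+1\}$. Then $\tilde M_1 \subseteq \tilde M_2 \subseteq \cdots \subseteq \tilde M_n$ as multisets. The difference $\tilde M_r \setminus \tilde M_{r-1}$ is exactly the multiset of bottom entries $b_c$ of the columns of height $q_c = n-r+1$, that is, the columns whose top box is a capital in $C_r$.

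These shifted multisets are computed from the row data only, so row-equivalent tableaux $\bA,\bA'$ give the same $\tilde M_r$. They therefore give the same multiset of bottom entries $\{b_c : q_c = h\}$ for every height $h$. Within each height class, a permutation of the columns of that height, which is a product of column-swaps, carries the bottom row of $\bA$ to that of $\bA'$. Column connectedness then forces the whole tableaux to agree.

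The main obstacle is bookkeeping. I must make sure that multiset subtraction is legitimate, which it is because $\tilde M_{r-1}$ is genuinely a sub-multiset of $\tilde M_r$. I must also check that the columns of a given height are the ones in $W_{\col}$'s action, which holds by the definition of $w_{(i;j)}$ requiring $q_i = q_j$. Working in characteristic $p$ causes no trouble, since we only compare multisets of field elements and never order them.
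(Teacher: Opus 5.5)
Your proof is correct, but it takes a different route from the paper's.

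You work with a complete invariant of the $W_{\col}$-orbit of a column connected tableau: the multiset of pairs $(q_c,b_c)$. You read it off from the row multisets by telescoping. After the shift by $n-r$, the row multisets are indexed by the nested sets of columns of height $\ge n-r+1$. So $\tilde M_r-\tilde M_{r-1}$ is a legitimate subtraction of multiplicities, and it returns the bottom entries of the columns of height exactly $n-r+1$.

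The paper instead starts from an explicit $w\in W_{\row}$ with $\bA'=w(\bA)$ and repairs it. Take $r$ minimal with $w(C_r)\neq C_r$, and suppose $w$ sends a non-capital box $i$ of row $r$ to a capital. Column connectedness then produces a non-capital box $j$ in the same row with $a_j=a_{w(i)}$. Replacing $w$ by $(j\;w(i))w$ leaves $w(\bA)$ unchanged while strictly increasing $|C_r\cap w^{-1}(C_r)|$. Iterating gives $u\in W_{\row}$ preserving every $C_r$. A column connected tableau is determined by its entries on capitals (the tops of the columns), so $u$ acts as a permutation of columns of equal height.

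The paper's argument is constructive at the level of the witnessing permutation, but it needs a termination argument and some case bookkeeping (e.g.\ $w(k)=k$). Yours replaces the exchange algorithm by a multiplicity count, is shorter, and makes clear why the statement holds: the row data determine the column labels sorted by height. Working from the bottom row, where every column is present, is what makes the nesting automatic.

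One cosmetic remark: the induction on $n$ announced in your first paragraph is never used, since the argument is direct, so that sentence can be dropped.
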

\begin{proof}
    Since $\bA$ is column-connected, for all $r = 1,  \dots, n$ and all $i \in C_r^c$ there exists $k\in\{1,\ldots,N\}$ with $\row(k) = r-1$, $\col(k) = \col(i)$, and $ a_i = a_k + 1$; similarly for $\bA'$.
    
    By assumption, there exists $w \in W_{\row}$  such that $a'_i = a_{w(i)}$ for all $i =1, \dots, N$.
    We give an algorithm producing $u \in W_{\row}$ such that $a'_i = a_{u(i)}$ for all $i =1, \dots, N$ and $u(C_r) = C_r$ for all $r = 1, \dots, n$. Let $r$ be minimal such that $w(C_r)\neq C_r$; note that $r>1$ since $C_1^c=\emptyset$. Since $w(C_r) \neq  C_r$, there exists a box $i$ with $\row(i) =r, i \notin C_r$ and $w(i) \in C_r$. Then there exists $k\in\{1,\ldots,N\}$ such that $\row(k) = r-1$ and $\col(k) = \col(i)$, and this satisfies $a_{w(i)} =a'_i= a'_k +1 = a_{w(k)} +1$.
    But there also exists a box $j$ with $\row(j) = r$ and $ \col(j) = \col(w(k))$, and this satisfies $j\in C_r^c$ (so $j\neq w(i)\in C_r$) and $a_j =a_{w(k)}+1= a_{w(i)}$.

\begin{center}

\begin{picture}(150,40)

\put(0,0){\line(1,0){20}}
\put(0,0){\line(0,1){40}}

\put(0,20){\line(1,0){20}}
\put(0,40){\line(1,0){20}}

\put(20,0){\line(0,1){40}}
\put(40,0){\line(1,0){20}}
\put(40,0){\line(0,1){40}}

\put(60,0){\line(0,1){40}}
\put(40,40){\line(1,0){20}}
\put(40,20){\line(1,0){20}}

\put(80,0){\line(1,0){20}}
\put(80,0){\line(0,1){20}}
\put(80,20){\line(1,0){20}}
\put(100,20){\line(0,-1){20}}

\put(5,25){\hbox{$k$}}
\put(5,5){\hbox{$i$}}

\put(23,25){\hbox{$\cdots$}}
\put(23,5){\hbox{$\cdots$}}

\put(38,25){\hbox{ \tiny $w(k)$}}
\put(45,5){\hbox{$j$}}

\put(63,5){\hbox{$\cdots$}}

\put(85,28){\hbox{\large $\times$}}
\put(78,5){\hbox{ \tiny $w(i)$}}

\put(110,25){\hbox{$\leftarrow$ row $r-1$}}
\put(110,5){\hbox{$\leftarrow$ row $r$}}
\end{picture}

\end{center}

    Set $\bar w:= (j \; w(i)) w$. 
    Then $a_{w(i)} = a_{\bar w(i)}$ for all $i = 1, \dots, N$, and so $\bA'=w(\bA)=\o{w}(\bA)$.
    
    If $l \in C_r$ satisfies $w(l) \in C_r$, then also $\bar w (l) \in C_r$; moreover $\bar w (i) \in C_r$. Therefore, $\lvert C_r \cap \bar w^{-1} (C_r)) \rvert > \lvert C_r \cap w^{-1} (C_r)) \rvert$.
    We then iterate the procedure, substituting $w$ with $\bar w$; the inequality in the previous line ensures the condition $w(C_r) = C_r$ is reached in a finite number of steps.
    (Observe that  if $w(k) = k$ then $i  = j$ and the argument still works.)

    The final permutation obtained by running the algorithm is our sought $u$.
    Since $\bA,\bA'$ are column-connected, the fillings of $\bA$, resp. $\bA'$, are uniquely determined by the values $a_i$, resp.  $a'_i$ for $i \in \bigcup_{r = 1}^n C_r$.
    Since $u$ acts permuting the sets $C_r$, we conclude that $u$ swaps the columns of $\bA$ of the same height, whence the thesis.
\end{proof}

\subsection{Subalgebras associated to a pyramid}
\label{ss:subalgebrasassociated}
The standard matrix units in $\g$ will be denoted $e_{i,j}$, and we write $\t = \sspan\{e_{i,i} \mid i=1,...,N\}$ for the diagonal torus. There is a nice choice of $G$-equivariant isomorphism $\kappa : \g \to \g^*$ given by $X \mapsto (Y \mapsto \Tr(XY))$, which restricts to $\kappa : \t \overset{\sim}{\to} \t^*$. 

Associated to the pyramid $\bpi$, there is a parabolic subalgebra $\p \subseteq \gl_N$ with Levi factor $\g_0$ and nilradical $\r$ defined by
\begin{eqnarray*}\p &=& \sspan\{e_{i,j} \mid \col(j) \ge \col(i)\},\\
\g_0 &=& \sspan\{ e_{i,j} \mid \col(i) = \col(j)\},\\\r &=& \sspan\{e_{i,j} \mid \col(j) > \col(i)\}.
\end{eqnarray*}

The centre of the Levi is equal to
$\z(\g_0) = \sspan\{ \sum_{\col(i) = k} e_{i,i} \mid k=1,...,\blambda_n\},$
and this identifies under $\kappa$ with
$$\z^* := \Ann_{\t^*} ([\g_0, \g_0]\cap \t) \subseteq \t^*.$$

We consider the following Borel subalgebra of $\g$
$$\b = \sspan\{e_{i,j} \mid \row(i) < \row(j), \text{ or } \row(i) = \row(j) \text{ and } \col(i) < \col(j)\},$$
and we write $\rho \in \t^*$ for the half-sum of positive roots corresponding to $\b$ (note that in \cite{GT} this is denoted $\widetilde{\rho}$). We remark that $\b \not\subset \p$, that the choice of $\b$ is not the upper triangular matrices, and that $\rho\notin \z^*$, in general.

We define a nilpotent element $e\in \gl_N$ by
$$e = \sum_{\substack{\row(i) = \row(j)\\ \col(i) = \col(j)-1}} e_{i,j},$$
and we write $\chi =\kappa(e) \in \g^*$. Observe that $\O_\chi$ corresponds to the partition $\lambda$ under the bijection between nilpotent coadjoint $\GL_N$-orbits and partitions of $N$. There is a grading of $\g$ associated to $\bpi$, which is defined by
$$\g(d) = \sspan\{e_{i,j} \mid \col(j) - \col(i) = d\}.$$
Note that $\p = \g(\ge 0)$.

Recall from the Introduction that $W(\g_0):=N_G(\g_0)/G_0$ is the relative Weyl group of $\g_0$, where we identify $W$ with permutation matrices in $\GL_N$ in order to act on $\g_0$, and where $G_0$ is the Levi subgroup of $\g$ with $\Lie(G_0)=\g_0$. We remark that it is straightforward to identify $W(\g_0)$ with the subgroup $W_{\col}\subseteq W$ generated by column swaps, and we do so for the remainder of this section. Then $W(\g_0)$ acts on $\t^*$ via the dot-action $w_\bullet \zeta = w(\zeta + \rho) - \rho$. 

For $i=1,\ldots,N$, denote by $\varepsilon_i\in\t^*$ the map $e_{jj}\mapsto \delta_{ij}e_{ii}$.
Given $\bA \in \Tab_\k(\bpi)$, we define $\lambda_\bA := \sum{a_i}\varepsilon_i \in \t^*$, and then define a bijection $\Tab_\k(\bpi)\xrightarrow{\sim}\t^*$, $\bA \mapsto \lambda_\bA-\rho$. This map is $W(\g_0)$-equivariant, where $W(\g_0)$ acts on $\Tab_\k(\bpi)$ as in Subsection~\ref{ss_pyrtab} and on $\t^*$ via the dot-action. As in \cite[\textsection 2.5]{GT}, $\Tab_\k^{\cc}(\bpi)$ corresponds to $\z^*$ under this bijection; in particular, this means that $\z^*$ is preserved by $W(\g_0)$ under the dot-action.

\subsection{The Miura map and pullback of representations}

The finite $W$-algebra $U(\g,e)$ can be realised as a subalgebra of the enveloping algebra $U(\p)$ (see  \cite[§2.6]{GT}) and since $\g_0$ is  a Levi factor of $\p$, the natural projection $\p \twoheadrightarrow \g_0$ allows to  consider
    the  composed map
$$\mu : U(\g,e) \to U(\p) \to U(\g_0),$$
 known as the Miura map.
If $M \in U(\g_0)\lmod$ then we can pullback $M$ to $U(\g,e)\lmod$ along $\mu$. 
Since $\z^*$ parameterises 1-dimensional representations of $\g_0$ we obtain a map
\begin{eqnarray}
\label{eq:Miurapullback}
\tilde\mu : \z^* \to \E(\g,e).
\end{eqnarray}

\begin{Theorem}
\label{T:OnedimensionalsviaMiura}
\begin{enumerate}
    \item The map \eqref{eq:Miurapullback} is a quotient mapping for the action $W(\g_0)_\bullet \curvearrowright \z^*$.

    \item We have a natural isomorphism of varieties
   $$\z^* / W(\g_0)_\bullet \isoto \E(\g,e).$$
\end{enumerate}
\end{Theorem}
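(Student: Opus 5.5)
The proof will rest on the results of \cite{GT}, which describe $\E(\g,e)$ and the abelianisation $U(\g,e)^\ab$ for $\g=\gl_N$ in positive characteristic. It has three steps.

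\textbf{Step 1: well-definedness and invariance.} Translate $\z^*$ into $\Tab^{\cc}_\k(\bpi)$ via the $W(\g_0)$-equivariant bijection $\bA\mapsto\lambda_\bA-\rho$ from Subsection~\ref{ss:subalgebrasassociated}. The pullback $\tilde\mu(\lambda_\bA-\rho)$ is then the one-dimensional $U(\g,e)$-module attached to $\bA$. Following \cite{GT}, the Miura map $\mu$ sends the generators of $U(\g,e)$ (for instance the Brundan--Kleshchev type generators $D_i^{(r)}$, or equivalently the Gelfand--Tsetlin-type central elements coming from each row) to explicit elements of $U(\g_0)$. Evaluating at a one-dimensional character shows that the resulting character depends only on the row-equivalence class of $\bA$. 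In other words, it depends on the multisets of (shifted) entries in each row, which are the roots of the image of the generating series $D_r(u)$. Combined with Lemma~\ref{L:rowiffcol}, this shows that $\tilde\mu$ is constant on $W(\g_0)_\bullet$-orbits.

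\textbf{Step 2: fibres and surjectivity.} Conversely, if two column-connected tableaux give the same character, then their row multisets agree, so the tableaux are row-equivalent. By Lemma~\ref{L:rowiffcol} they are therefore column-swap equivalent, and hence $\tilde\mu$ separates orbits. For surjectivity, I would invoke the main classification theorem of \cite{GT}: $U(\g,e)^\ab$ is generated by the images of the elements $D_r^{(s)}$ with $s\le$ (number of columns of height $n-r+1$). Every character of $U(\g,e)^\ab$ is then determined by arbitrary values of these, that is, by arbitrary monic polynomials whose roots are the capital entries in row $r$. Any such data is realised by some column-connected tableau, so $\tilde\mu$ is surjective on closed points.

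\textbf{Step 3: quotient map and isomorphism.} To upgrade the bijection on points to $\z^*/W(\g_0)_\bullet\isoto\E(\g,e)$, I would show that the comorphism $\mu^{\ab}:U(\g,e)^\ab\to\k[\z^*]$ is injective with image $\k[\z^*]^{W(\g_0)_\bullet}$. After a shift by $\rho$, the invariant ring is a polynomial ring in the elementary symmetric functions of the capital entries of each column-height class. The generators above map exactly to these elementary symmetric functions, up to triangular corrections. Since \cite{GT} shows that $U(\g,e)^\ab$ is a polynomial algebra in the same number of generators, the map is an isomorphism onto the invariants. Part (1) then follows, because $\z^*\to\z^*/W(\g_0)_\bullet$ is the categorical and geometric quotient for a finite group, and part (2) is its restatement.

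\textbf{Main obstacle.} I expect the hardest step to be Step 3 in characteristic $p$, namely the claim that $U(\g,e)^\ab$ is reduced and polynomial, and that the images are exactly the invariant generators. I would rely on \cite{GT}, which proves precisely this polynomiality for all $p$. Checking the triangularity of the images should be done carefully via the Kazhdan-graded leading terms.
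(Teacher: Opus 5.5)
Your proposal is correct and follows essentially the same route as the paper. Both arguments identify $\z^*$ with $\Tab^{\cc}_\k(\bpi)$ and use \cite{GT} to see that the Miura pullback of $\lambda_\bA-\rho$ is the character $\widetilde{\k}_{\bar\bA}$, which is determined by the row-symmetric functions of $\bA$; both then invoke Lemma~\ref{L:rowiffcol} to identify the fibres with $W(\g_0)_\bullet$-orbits, and use the polynomiality of $U(\g,e)^\ab$ to conclude that the comorphism is an isomorphism onto $\k[\z^*]^{W(\g_0)_\bullet}$. Your description of the invariant ring via elementary symmetric functions of the capitals, and your triangularity argument for the images of the generators, spell out the check that the paper leaves to the reader (``using the arguments in Section~5 of \emph{op.\ cit.}'').
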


\begin{proof}
By the proof of \cite[Theorem 5.1]{GT}
 there is a surjective map $\Tab^{\cc}_\Bbbk(\bpi) \onto \E(\g, e)$ associating to each  column-connected tableau $\bA$  the representation which in  \emph{op. cit.} is denoted $\widetilde{\Bbbk}_{\bar \bA}$.
 Moreover, since  $\widetilde{\Bbbk}_{\bar \bA}$ is determined by means of symmetric polynomials in the entries of the rows of $\bA$, the fibre of $\widetilde{\Bbbk}_{\bar \bA}$ through this map is exactly $W_{\row} \bA \cap \Tab^{\cc}_\Bbbk(\bpi)$.
From  the first part of the proof of \emph{loc. cit.} we deduce  that $\widetilde{\Bbbk}_{\bar \bA}$ is the pull-back to $U(\g, e)$ via $\mu$ of the 1-dimensional $U(\g_0)$-module with weight $\lambda_\bA - \rho$.
 
Summing up, we have a commutative diagram:
\begin{center}
\begin{tikzcd}
 \Tab^{\cc}_\Bbbk(\bpi)  \arrow[rd] \arrow[d, rightarrow] & \\
    \z^*  \arrow[r, "\tilde \mu"] &\E(\g,e)
\end{tikzcd}
\end{center}
As noted above, the vertical arrow is $W(\g_0)$-equivariant: the action on the domain is introduced in §\ref{ss_pyrtab} and on the codomain is the dot action of $W(\g_0)$.
Moreover, the upper-right arrow is $W(\g_0)$-invariant by above considerations on the fibres of the map and Lemma \ref{L:rowiffcol} (recall that $W_{\col}$ identifies with $W(\g_0)$).

This implies that $\tilde \mu$ fulfills the conditions of a geometric quotient. Firstly, $\z^*$ is an affine space and $W(\g_0)$ is reductive. Furthermore, $\E(\g,e)$ is an affine space by Lemma 3.2 in \emph{op. cit.}. The fibres of the surjective map $\tilde \mu$ are $W(\g_0)_\bullet$-orbits. Finally, the pull-back of the map $\tilde \mu$ is an isomorphism of algebras $U(\g,e)^{\ab} \simeq \k[\z^*]^{W(\g_0)_\bullet}$. Indeed, the algebra $U(\g,e)^{\ab}$ is a polynomial algebra and thus is reduced, see \emph{loc. cit.}. It is easy to check using the arguments in Section 5 of \emph{op. cit.} that each generator is mapped to a ${W(\g_0)_\bullet}$-invariant polynomial in the coordinates of $\z^*$ and that the resulting map is an isomorphism.
\end{proof}

\section{Study of quantizations}\label{s:Quants}

In this section we prove the main theorem of the paper, which appears here as Theorems~\ref{T:MainThm} and \ref{C:quantshavecharacters}. Our main goal is a bijection $$\Idl_{\o{\O}_\chi}^{G} U(\g)\xlongleftrightarrow{1-1} \Prim_{\chi+X}^{p^{\dim\O_\chi}} U(\g).$$ Here, the notation $\Idl_{\o{\O}_\chi}^{G} U(\g)$ is as in Subsection~\ref{ss:quantclosedsubschemes} and the notation $\Prim_{\chi+X}^{p^{\dim\O_\chi}} U(\g)$ is as in Subsection~\ref{ss:primitiveideals}. Unless otherwise stated we work with the PBW filtration, though the reader should be on notice that we do sometimes otherwise state.

Throughout, we take $G=\GL_N$, although some of our proofs do not require this -- see Remark~\ref{R:whyGLN} for a full discussion of this assumption. We fix $\chi\in\Nc(\g^*)$, and write $e=\kappa^{-1}(\chi)$ and $\O_\chi=G\cdot\chi$. Recall the notation $d_\chi:=\dim\O_\chi$.

\subsection{The $p$-support of a quantization}\label{ss:psupport}

For $M \in U(\g)\lmod$, we define the {\em $p$-support} $\Supp_p(M)$ to be the support of $M|_{Z_p(\g)} \in Z_p(\g)\lmod$, i.e. the set of $I\in\Spec Z_p(\g)$ such that the localisation $M_I$ is non-zero. Since $Z_p(\g)$ is finitely generated and noetherian, it is well known that the $p$-support may equivalently be defined to be the closed subscheme of $(\g^*)^{(1)}$
 $$\Supp_p(M) := \{ I \in \Spec Z_p(\g) \mid \Ann_{Z_p(\g)}M\subseteq I\}.$$
We can describe the closed points of $\Supp_p(M)$ in the following useful way. Recall the notation $I_\eta \in \Spec Z_p(\g)$ from \eqref{eq:idealIchi}.
\begin{Lemma}
\label{L:whatissupport}
The closed points of $\Supp_p(M)$ can be identified with the $\eta \in \g^*$ such that $I_\eta M \ne M$.
\end{Lemma}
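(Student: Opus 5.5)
We identify closed points of $\Spec Z_p(\g)$ with maximal ideals $I_\eta$, $\eta\in\g^*$, via \eqref{eq:idealIchi} and the identification $(\g^*)^{(1)}=\Spec Z_p(\g)$. The closed points of $\Supp_p(M)$ are then the $\eta$ with $\Ann_{Z_p(\g)}M\subseteq I_\eta$. The statement therefore reduces to the equivalence
$$\Ann_{Z_p(\g)}M\subseteq I_\eta \iff I_\eta M\neq M,$$
and we prove it using that $M$ is finitely generated over $Z_p(\g)$. Indeed, $M$ is finitely generated over $U(\g)$, and $U(\g)$ is a free $Z_p(\g)$-module of finite rank $p^{\dim\g}$, so $M$ is finitely generated over the commutative noetherian ring $Z_p(\g)$.

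For the direction ($\Leftarrow$), suppose $\Ann_{Z_p(\g)}M\not\subseteq I_\eta$. Choose $z\in\Ann_{Z_p(\g)}M$ with $z\notin I_\eta$. Since $I_\eta$ is maximal, $z+I_\eta$ is a unit in $Z_p(\g)/I_\eta$, so $1=az+b$ for some $a\in Z_p(\g)$ and $b\in I_\eta$. Then for every $m\in M$ we have $m=azm+bm=bm\in I_\eta M$, so $M=I_\eta M$.

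For the direction ($\Rightarrow$), suppose $I_\eta M=M$. By Nakayama's lemma for the finitely generated $Z_p(\g)$-module $M$ (the determinant trick), there is $z\in 1+I_\eta$ with $zM=0$. Then $z\in\Ann_{Z_p(\g)}M$ but $z\notin I_\eta$, so $\Ann_{Z_p(\g)}M\not\subseteq I_\eta$.

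Alternatively, one can phrase the argument through localisation: $M_{I_\eta}\neq 0$ if and only if $M/I_\eta M\neq 0$, by Nakayama over the local ring $Z_p(\g)_{I_\eta}$, since $M/I_\eta M\cong M_{I_\eta}/I_\eta M_{I_\eta}$. This recovers the first definition of the support directly. The only point requiring care is the finite generation of $M$ over $Z_p(\g)$; once that is in place, both directions are standard commutative algebra.
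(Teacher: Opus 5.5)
Your argument is correct. The paper gives no proof of this lemma and treats it as a well-known consequence of $\Supp_p(M)=\{I \mid \Ann_{Z_p(\g)}M\subseteq I\}$; your Nakayama argument is exactly the standard justification being taken for granted, and you correctly locate the needed finiteness in $M$ being finitely generated over $Z_p(\g)$ (only the implication from $I_\eta M=M$ to $\Ann_{Z_p(\g)}M\not\subseteq I_\eta$ actually uses it).
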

We will use this lemma repeatedly in what follows, and we always identify the $p$-support with its set of closed points.

Let $I \in \Idl_{\overline{\O}_\chi}^G U(\g)$ and set $\A := U(\g)/ I$, so that $\gr \A = \k[\o\O_\chi]$. We regard $\A$ as a $U(\g)$-module via the natural map $\Phi : U(\g) \onto \A$. The {\it $p$-centre of $\A$} is defined to be $Z_p\A := \Phi Z_p(\g)$. To describe the $p$-support of $\cA$, we first need the following lemma about the Poisson structure of $\k[\o\O_\chi]$.
\begin{Lemma}
\label{L:casimirspthpowers}
The Poisson centre $Z\k[\cO_\chi]$ is equal to $\k[\cO_\chi]^p$, and the centre $Z\A$ is equal to $Z_p\A$.
\end{Lemma}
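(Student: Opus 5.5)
Two claims are in play. First, the Poisson centre of $\k[\cO_\chi]$ is exactly $\k[\cO_\chi]^p$. Second, that equality lifts along $\gr$ to the statement $Z\A = Z_p\A$ for the quantization.

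\textbf{Poisson centre.} By Lemma~\ref{L:normalorbits}, $\k[\cO_\chi] = \k[\O_\chi]$, so we may work on the smooth symplectic orbit $\O_\chi \cong G/G^\chi$. Here the bracket is nondegenerate: the Hamiltonian vector fields $\{\mu^*(x),-\}$, $x\in\g$, span $T_\eta\O_\chi$ at every point $\eta$, since $\O_\chi$ is a $G$-orbit and $G^\chi$ is smooth by \eqref{L:smoothstabilisers}.

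\begin{enumerate}
\item \emph{Reduce to killed differentials.} Take $f$ Poisson central. Then $\{x,f\}=0$ for all $x\in\g$. By nondegeneracy this says that $df$ vanishes on every tangent space, i.e.\ $df = 0$ in $\Omega^1_{\O_\chi}$.
\item \emph{Apply the Cartier criterion.} On a smooth variety over a perfect field in characteristic $p$, the kernel of $d$ on functions is $\Oc^p$. Applying this on $\O_\chi$, and using that taking $p$th powers commutes with global sections in the sense that $\Gamma(\O_\chi, \Oc^p) = \k[\O_\chi]^p$, gives $f \in \k[\O_\chi]^p$.
\item \emph{Justify the global sections step.} This holds because the Frobenius twist $\O_\chi^{(1)}$ is normal with the same affinization, and its global functions are $p$th powers. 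Concretely, $f$ is a $p$th power locally, hence $f$ defines a regular function on $\O^{(1)}$, which is normal. Since $\k[\O^{(1)}] = \k[\O]^{(1)}$, the result follows.
\item \emph{Reverse inclusion.} Every $p$th power is Poisson central, since $\{g^p,-\} = p g^{p-1}\{g,-\} = 0$.
\end{enumerate}

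\textbf{Centre of $\A$.} The inclusion $Z_p\A \subseteq Z\A$ is clear, because $Z_p(\g)$ is central. For the converse, let $z \in Z\A$ have degree $d$.

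\begin{enumerate}
\item \emph{Degree zero case.} Commutation gives $[z,\A_j]\subseteq \A_{d+j-3}$, so the symbol $\bar z$ is Poisson central. By the first part, $\bar z = g^p$ for some homogeneous $g$ of degree $d/p$.
\item \emph{Lift the symbol.} Since $\gr\A = \k[\cO_\chi]$ is a quotient of $\k[\g^*]$, choose $h \in \k[\g^*]$ mapping to $g$. Then $h^p$ maps to $\bar z$. Now $h^p = \gr(\xi(h^{(1)}))$, where $\xi$ extends to an algebra isomorphism $S(\g)^{(1)}\to Z_p(\g)$ whose PBW symbol is the $p$th power map. Hence there is $c\in Z_p\A$ with $\bar c = \bar z$.
\item \emph{Induct.} Replace $z$ by $z-c$, which lies in $Z\A$ and has strictly smaller degree. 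The PBW filtration is discrete and nonnegative, so induction on degree terminates.
\end{enumerate}

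\textbf{Main obstacle.} The delicate point is the Poisson-centre step: passing from ``$df=0$ on the smooth orbit'' to ``$f$ is a global $p$th power of a regular function''. I would handle it through normality of $\cO_\chi$ via Donkin together with the Cartier isomorphism. The normality is needed so that the function $f^{1/p}$ on the smooth locus extends.
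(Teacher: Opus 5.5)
Your proposal is correct and follows essentially the paper's route. The paper gets the first claim by identifying $\k[\cO_\chi]=\k[\O_\chi]$ via Donkin's normality and applying \cite[Lemma~1.10]{BeKa} to $\O_\chi$, then taking global sections; that lemma is exactly your Cartier argument on the smooth symplectic orbit. For the second claim the paper records the chain $\k[\cO_\chi]^p\subseteq \gr(Z_p\A)\subseteq\gr(Z\A)\subseteq Z\k[\cO_\chi]=\k[\cO_\chi]^p$, which your symbol-lifting induction on the discrete, nonnegative PBW filtration simply unpacks.
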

\begin{proof}
Since $\k[\cO_\chi]=\k[\O_\chi]$ by Lemma~\ref{L:normalorbits}, we can apply \cite[Lemma~1.10]{BeKa} to $X=\O_\chi$ and take global sections. This proves the first claim. Taking the associated graded morphism of the inclusion $Z_p\cA\subseteq Z\cA$ yields inclusions $$ \k[\cO_\chi]^p=(\gr\Phi)(\gr Z_p(\g))\subseteq \gr(Z_p\cA)\subseteq \gr(Z \cA)\subseteq Z\k[\cO_\chi]=\k[\cO_\chi]^p,$$ and thus that $\gr(Z_p\cA)=\gr(Z\cA)$. This implies the result.
\end{proof}

Our next result shows that the $p$-support of $\cA$ is the closure of a (not necessarily nilpotent) coadjoint $G$-orbit. Recall that $\bbS_\chi$ denotes the (unique) sheet of $\g^*$ containing $\O_\chi$ (see Section~\ref{ss:sheets}).

\begin{Proposition}
\label{P:psupport}
    $\Supp_p(\A) = \overline{\O}^{(1)}$ for some $G$-orbit $\O \subseteq \bbS_\chi$. 
\end{Proposition}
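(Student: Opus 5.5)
We plan to show that $Z_p\A$ is the coordinate ring of an orbit closure in $(\g^*)^{(1)}$, using primality of $\A$ together with the identification of the centre in Lemma~\ref{L:casimirspthpowers}.

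\textbf{Step 1: $Z_p\A$ is a domain of the right dimension.} Since $\gr\A=\k[\cO_\chi]$ is an integral domain, $\A$ is a domain. In particular the commutative subalgebra $Z_p\A\cong Z_p(\g)/(I\cap Z_p(\g))$ is a domain, so $\mathfrak P:=I\cap Z_p(\g)$ is prime. Then $\Supp_p(\A)=V(\mathfrak P)$ is an irreducible closed subvariety of $(\g^*)^{(1)}$. By the proof of Lemma~\ref{L:casimirspthpowers}, $\gr Z_p\A=\k[\cO_\chi]^p$, which has Krull dimension $d_\chi$. Using the standard fact that Krull dimension is preserved under passing to the associated graded of a finitely generated filtered commutative algebra (discrete after untwisting), we get $\dim\Supp_p(\A)=d_\chi$. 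Moreover $V(\mathfrak P)$ is $G$-stable, because $I$ is $G$-stable and $\xi$ is $G$-equivariant.

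\textbf{Step 2: the support is a single orbit closure.} Since $\A$ is finite over its centre $Z\A=Z_p\A$ (as $U(\g)$ is finite over $Z_p(\g)$), the fibres of $\A$ over closed points of $V(\mathfrak P)$ are nonzero finite-dimensional algebras $\A/I_\eta\A$. Each has a simple quotient, which is a simple $U_\eta(\g)$-module killed by $I$. We want to bound orbit dimensions from above: $\dim\A/I_\eta\A\le p^{d_\chi}$ for generic $\eta$, since $\A$ has rank $p^{d_\chi}$ over $Z_p\A$. This follows because $\gr$ of $\A$ over $\gr Z_p\A$ is $\k[\O_\chi]$ over $\k[\O_\chi]^p$, which has generic rank $p^{d_\chi}$. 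Combined with Premet's divisibility \eqref{eq:Premetstheorem}, every simple quotient has dimension at least $p^{\frac12\dim\O_\eta}$, so $\dim\O_\eta\le d_\chi$ on a dense open set. Upper semicontinuity of orbit dimension then gives $\dim\O_\eta\le d_\chi$ on all of $V(\mathfrak P)$. Since $V(\mathfrak P)$ is irreducible, $G$-stable and of dimension $d_\chi$, while every orbit in it has dimension at most $d_\chi$, the open dense subset where $\dim\O_\eta$ is maximal must consist of orbits of dimension exactly $d_\chi$. Hence $V(\mathfrak P)$ is the closure of a single orbit $\O^{(1)}$ with $\dim\O=d_\chi$. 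A bare irreducibility argument would allow a family of orbits, but that would give $\dim V(\mathfrak P)>$ the orbit dimension, which is impossible here.

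\textbf{Step 3: $\O$ lies in $\bbS_\chi$.} The Rees algebra of $\A$ degenerates $Z_p\A$ to $\k[\cO_\chi]^p$. Concretely, the ideal $\mathfrak P$ degenerates under $\gr$ to the defining ideal of $\cO_\chi^{(1)}$, so $\cO_\chi^{(1)}$ is the associated cone of $\overline{\O}^{(1)}$. Consequently $\overline{\k^\times\O}$ is an irreducible family of orbits of dimension $d_\chi$ whose closure contains $\O_\chi$. This places $\O$ in the rank stratum $d_\chi$ and in an irreducible subset of it containing $\O_\chi$, hence in $\bbS_\chi$ by uniqueness of the sheet containing $\O_\chi$, as in the proof of Lemma~\ref{L:dogwobble}.

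The main obstacle is the rank computation in Step 2: showing that the generic fibre dimension of $\A$ over $Z_p\A$ equals $p^{d_\chi}$ by passing through $\gr$. We would try to handle this via the Rees algebra, which is flat over $\k[t]$, using semicontinuity of generic rank.
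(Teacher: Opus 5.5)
\textbf{There is a genuine gap in Step~2.} What you actually prove there is only the upper bound $\dim\O_\eta\le d_\chi$ for $\eta\in V(\mathfrak P)$. That bound is automatic anyway: an orbit cannot have larger dimension than the $d_\chi$-dimensional variety containing it. So the rank computation and Premet's divisibility do no real work. The decisive claim is that the generic orbit in $V(\mathfrak P)$ has dimension \emph{exactly} $d_\chi$, and this is asserted, not proved. An irreducible $G$-stable closed subvariety of dimension $d_\chi$ can perfectly well be the closure of a positive-dimensional family of orbits of some dimension $k<d_\chi$. Nothing you have established excludes $\dim V(\mathfrak P)=d_\chi>k$, so the sentence ``which is impossible here'' has no support.

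The fibre analysis cannot rescue this. Premet's theorem bounds dimensions of simple $U_\eta(\g)$-modules from below, so a generic fibre $\A/I_\eta\A\cong\Mat_{p^{d_\chi/2}}(\k)$ only yields $\dim\O_\eta\le d_\chi$. It is compatible with smaller orbits: the Steinberg module is a simple $U_0(\g)$-module of dimension $p^{\dim\Nc(\g^*)/2}$, although $\dim\O_0=0$. Consequently Step~3, which starts from $\dim\O=d_\chi$, is also unsupported. The rank computation you flag as the main obstacle is routine and not needed for this statement.

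\textbf{The missing ingredient is the degeneration you invoke in Step~3, used one step earlier.} This is how the paper argues.

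First, you need $\gr\mathfrak P$ to be the ideal of $\cO_\chi^{(1)}$ for the filtration on $Z_p(\g)$, not just $\gr Z_p\A=\k[\cO_\chi]^p$ for the subspace filtration from $\A$. This requires strictness of $Z_p(\g)\to Z_p\A$, which holds because $\gr Z_p(\g)\to\k[\cO_\chi]^p$ is onto and the PBW filtration is discrete.

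Next, let $k$ be the maximal orbit dimension in $V(\mathfrak P)$. The set $(\g^*)^{(1)}_{\le k}$ of points with orbit dimension at most $k$ is closed and $\k^\times$-stable. Its (homogeneous) ideal is contained in $\mathfrak P$, hence in $\gr\mathfrak P$. So $(\g^*)^{(1)}_{\le k}$ contains the asymptotic cone $\bbV(\gr\mathfrak P)=\cO_\chi^{(1)}$. Since $\O_\chi$ has dimension $d_\chi$, this forces $k\ge d_\chi$. Combined with $k\le\dim V(\mathfrak P)=d_\chi$ and irreducibility, $V(\mathfrak P)$ is the closure of a single orbit of dimension $d_\chi$.

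From there your Step~3 works essentially as in the paper. Remove the orbits of dimension $<d_\chi$ from $\overline{\k^\times\O^{(1)}}$ to get an irreducible subset of the rank stratum containing both $\O^{(1)}$ and $\O_\chi^{(1)}$. This subset lies in a single sheet, which must be $\bbS_\chi$ by uniqueness.
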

\begin{proof}

For $J \lhd Z_p(\g)$ let $\bbV(J) = \{ J' \in \Spec Z_p(\g) \mid J' \supseteq J\}$, viewed as a reduced scheme. 
Denote $I_p:=I\cap Z_p(\g)\lhd Z_p(\g)$
so that $\Supp_p(\A) = \bbV(I_p)$
and identify $Z_p(\g)$ with $\k[(\g^*)^{(1)}]$. 
Throughout the proof we use the standard grading on $\k[\g^*]$ placing $\g$ in degree 2, and regard $\k[\g^*]$ as a filtered algebra by $\k[\g^*] = \bigcup_{d \ge 0} \k[\g^*]_{\le d}$. 
The identification $\k[(\g^*)^{(1)}]\isoto \k[\g^*]^p\subseteq \k[\g^*]$ gives a corresponding grading and filtration on $\k[(\g^*)^{(1)}]$. These choices allow us to consider the associated graded ideal $\gr J \lhd \k[(\g^*)^{(1)}]$ for any ideal $J \lhd \k[(\g^*)^{(1)}]$, and define the asymptotic cone $\k \bbV(J) := \bbV(\gr J)$, a reduced subscheme of $(\g^*)^{(1)}$.

{\bf Claim 1:} $\k \bbV(I_p)=\o{\O}_\chi^{(1)}$.

{\bf Proof of Claim 1:} 
Denote by $\nu$ the composition $Z_p(\g)\hookrightarrow U(\g)\twoheadrightarrow \cA$, and equip $Z_p\cA=\nu(Z_p(\g))\subseteq \cA$ with the subspace filtration (where $U(\g)$ has the PBW filtration with $\g$ in degree $2$). By Lemma~\ref{L:casimirspthpowers}, $\gr(\nu(Z_p(\g)))=\k[\o{\O}_\chi]^p$ and thus $\gr(Z_p(\g))\to\gr(\nu(Z_p(\g)))$ is surjective. By \cite[Corollary 6.14]{MR}, this implies that $Z_p(\g)\to \nu(Z_p(\g))$ is surjective and strict. In particular, the image and coimage of this map are isomorphic as filtered algebras, i.e. $Z_p(\g)/I_p\cong \nu(Z_p(\g))$, and thus $\gr(Z_p(\g)/I_p)\cong \k[\o{\O}_\chi]^p=\k[\o\O_\chi^{(1)}]$, as required.

{\bf Claim 2:} $\bbV(I_p)$ is an orbit closure in $(\g^*)^{(1)}$.

{\bf Proof of Claim 2:} Let $(\g^*)^{(1)}_{\leq k}$ denote the union of orbits of dimension less than $k+1$. 
Since $\bbV(I_p)$ is $G$-stable it is a union of orbits. 
We claim that $\dim\O_\chi$ is the maximal dimension of $G$-orbits in $\bbV(I_p)$. 
If not, let $k<\dim\O_\chi$ be such that $\bbV(I_p)\subseteq (\g^*)^{(1)}_{\leq k}$. 
Since the rank strata are $\k^\times$-stable we may apply \cite[5.3(4), (5)]{PrST} to deduce that $\bbV(\gr I_p) \subseteq (\g^*)^{(1)}_{\leq k}$. 
This contradicts $\bbV(\gr I_p) = \o\O_\chi^{(1)}$. Since $\dim(\bbV(I_p))=\dim(\k \bbV(I_p))=\dim\O_\chi$, 
$\bbV(I_p)$ must therefore contain an orbit of dimension $\dim\O_\chi=\dim(\bbV(I_p))$. 
Furthermore, $I_p$ is prime, because $Z_p(\g)/I_p$ embeds in $\A$ which has no zero divisors, and thus $\bbV(I_p)$ is irreducible. This proves the claim.

{\bf Claim 3:} $\bbV(I_p)=\o\O^{(1)}$ for some $G$-orbit $\O\subseteq \bbS_\chi$.

{\bf Proof of Claim 3:} From Claim 2, we know that $\bbV(I_p)=\o\O^{(1)}$ for some $G$-orbit $\O\subseteq \g^*$. 
Then \cite[5.3(5)]{PrST} implies that $\o\O_\chi^{(1)}=\k \bbV(I_p)\subseteq \overline{\k^\times\O^{(1)}}$ and $\dim \O_\chi^{(1)} = \dim \O^{(1)}$. Consider the locally closed set $D := \overline{\k^\times \O^{(1)}} \setminus (\g^*)^{(1)}_{\le \dim \O-1}$. 
We have shown that $\O_\chi^{(1)}$ and $\O^{(1)}$ are contained in $D$. Since $D$ is irreducible and consists of orbits of the same dimension, it is contained in some sheet of $(\g^*)^{(1)}$. This completes the proof.
\end{proof}

We collect here some results about the structure of $\cA$ as a $Z_p\A$-algebra. Recall that for an integral domain $R$, the rank of an $R$-module $M$ is $\dim_{F}(F\otimes_R M)$ where $F = \Fract(R)$ is the field of fractions.

\begin{Proposition}
\label{P:fibredimension}
The following hold:
\begin{enumerate}
    \setlength{\itemsep}{4pt}
    \item[(i)] $\A$ has rank $p^{d_\chi}$ over $Z_p\A$;
    \item[(ii)] If $\O \subseteq \g^*$ is the orbit of Proposition~\ref{P:psupport}, so that $\Supp_p(\A) = \o\O$, then for $\eta \in \O$ $$\dim \A/\Phi(I_\eta)\A = p^{d_\chi}.$$
\end{enumerate}
\end{Proposition}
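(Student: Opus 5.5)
For (i) I pass to associated graded objects with respect to the PBW filtration. The proof of Proposition~\ref{P:psupport} (Claim~1) showed that $Z_p(\g)\to Z_p\A$ is strict, so $\gr Z_p\A=\k[\cO_\chi]^p$, while $\gr\A=\k[\cO_\chi]$. Since $\k[\cO_\chi]=\k[\O_\chi]$ is a normal domain of dimension $d_\chi$, its Frobenius extension $\k[\cO_\chi]^p\subseteq\k[\cO_\chi]$ has generic rank $p^{d_\chi}$: the fraction field degree $[K:K^p]$ equals $p^{\operatorname{trdeg}K}$. I would then transfer this rank to the filtered level. Both $\A$ and $Z_p\A$ are domains ($\A$ is a domain because $\gr\A$ is), $\A$ is finite over $Z_p\A$ (it is a quotient of $U(\g)$, which is finite over $Z_p(\g)$), and $Z_p\A$ is central. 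By Lemma~\ref{L:casimirspthpowers}, $Z_p\A$ is the full centre, so $Q=\Fract(Z_p\A)\otimes_{Z_p\A}\A$ is a central simple division algebra over $Q$ and $\dim_Q$ of it is a square.

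To compute this dimension from the graded data, I would use the standard principle that rank is preserved under passage to $\gr$ for finite filtered modules over filtered domains with Noetherian domain $\gr$. One route is via the Rees algebra: $\operatorname{Rees}\A$ is a finite module over $\operatorname{Rees}Z_p\A$, flat over $\k[\hbar]$, and its generic rank coincides with the rank at both the $\hbar=1$ and $\hbar=0$ fibres. A cleaner alternative is to compute Gelfand–Kirillov multiplicities or Hilbert-function leading terms: $\A$ and $Z_p\A$ have Hilbert functions equal to those of $\k[\cO_\chi]$ and $\k[\cO_\chi]^p$. Comparing leading coefficients of finitely generated modules of the same dimension gives rank $=e(\A)/e(Z_p\A)=e(\k[\cO_\chi])/e(\k[\cO_\chi]^p)=p^{d_\chi}$. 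Here I use that $\A\cong (Z_p\A)^{r}$ up to torsion of smaller dimension, which holds because we are over a domain.

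For (ii), the map $\eta\mapsto\dim\A/\Phi(I_\eta)\A$ on $\Supp_p(\A)=\o\O^{(1)}$ is upper semicontinuous. It is also $G$-invariant, since $\Phi$ is $G$-equivariant and $G$ permutes the $I_\eta$. On the dense open subset where $\A$ is locally free over $Z_p\A$, the value equals the rank $p^{d_\chi}$. Since $\O$ is open and dense in $\o\O$ and the locally free locus is $G$-stable, nonempty and open, it meets $\O$ and therefore contains it, which gives (ii).

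The main obstacle is the rank transfer in (i). I would handle it with the Rees algebra and generic freeness over $\k[\hbar^{\pm1}]$-localised base, or, if that becomes messy, with the Hilbert-polynomial multiplicity comparison.
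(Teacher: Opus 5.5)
Your proposal is correct and is essentially the paper's argument. For (i), the paper likewise reduces to the rank of $\k[\cO_\chi]$ over $\k[\cO_\chi]^p$, computes $[\Fract(\k[\cO_\chi]):\Fract(\k[\cO_\chi])^p]=p^{d_\chi}$ via transcendence degree, and simply quotes the rank-preservation principle for filtered modules that you propose to verify via Rees algebras or multiplicities; for (ii), it uses the same nonempty $G$-stable open locus where the fibre dimension equals the rank, which must contain the dense orbit $\O$.
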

\begin{proof}
    (i) It is well-known that the rank of a filtered module is preserved by taking the associated graded module under our hypotheses (see \cite[Lemma 2.3]{Ti}, for example). Therefore it suffices to show that $\gr(\A)=\k[\o\O_\chi]$ has rank $p^{d_\chi}$ over $\gr(Z_p\A)=\k[\o\O_\chi]^p$ or, equivalently, that $$[\Fract(\k[\o\O_\chi]):\Fract(\k[\o\O_\chi])^p]=p^{d_\chi}.$$ The latter follows easily from the fact that $\dim\O_\chi$ equals the transcendence degree of $\Fract(\k[\o\O_\chi])$ over $\k$.    
    
    (ii) By (i), $\Supp_p(\A)$ contains a $G$-stable open subset on which the desired equality holds. This subset must contain $\O$, which proves the result.
\end{proof}

\subsection{From quantizations to primitive ideals}
\label{ss:quantisationtoprimitiveideals}

We restate some notation. Let $\S_\chi$ be the good transverse slice to $\O_\chi$ at $\chi$. The unique sheet of $\g^*$ containing $\chi$ is denoted $\bbS_\chi$ and the Katsylo section is $\chi + X := \S_\chi \cap \bbS_\chi$.

We define a map
\begin{eqnarray}
\label{eq:ideal_map}
    \Theta : \Idl_{\o\O_\chi}^G U(\g) \to \Prim_{\chi + X}^{p^{d_\chi}} U(\g),
\end{eqnarray}
as follows. Fix $I \in \Idl_{\o\O_\chi}^G U(\g)$ and write $\A = U(\g) / I$. There is an orbit $\O \subseteq \bbS_\chi$ such that $\o \O^{(1)} = \Supp_p(\A)$, by Proposition~\ref{P:psupport}. Thanks to Theorem~\ref{T:Katsylo} there is a unique $\eta \in \O \cap \S_\chi\subseteq\chi+X$.

Write $\Phi : U(\g) \onto \A$ and recall that $\eta$ corresponds to a maximal ideal $I_\eta \subseteq Z_p(\g)$. We define
\begin{eqnarray}
    \label{eq:keyidealmap}
    \Theta(I) := \Ker\big(U(\g) \to \A / \Phi(I_\eta) \A\big) = I + I_\eta U(\g).
\end{eqnarray}

Since the maps $U(\g) \onto \A \onto \A / \Phi(I_\eta)\A$ are surjective we have $U(\g) / \Theta(I) \cong \A / \Phi(I_\eta)\A$. Together with Proposition~\ref{P:fibredimension} we see that $\codim_{U(\g)} \Theta(I) = p^{d_\chi}$.

It follows from Lemma~\ref{L:KW+Humphreys} that all maximal ideals of $U_\eta(\g)$ have codimension greater than or equal to $p^{\dim \O_\eta}$. 
Minimality of codimension  implies that $\Theta(I)$ (which contains $I_\eta$) is a maximal ideal of $U(\g)$ and hence it is primitive.

By construction $\Theta(I) \cap Z_p(\g) = I_\eta$, and so we have demonstrated that \eqref{eq:ideal_map} is well-defined. We record this as a lemma.
\begin{Lemma}\label{L:IdltoPrim}
    $\Theta(I) \in \Prim_{\chi + X}^{p^{d_\chi}} U(\g)$. $\hfill \qed$
\end{Lemma}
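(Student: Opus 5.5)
The plan is to verify the three defining conditions of $\Prim_{\chi+X}^{p^{d_\chi}}U(\g)$ for $\Theta(I)=I+I_\eta U(\g)$, in the order: codimension, primitivity, $p$-central character. Throughout, $\O\subseteq\bbS_\chi$ is the orbit of Proposition~\ref{P:psupport} with $\Supp_p(\A)=\o\O^{(1)}$, and $\eta$ is the unique point of $\O\cap\S_\chi$ given by Theorem~\ref{T:Katsylo}. Since $\eta\in\bbS_\chi\cap\S_\chi$, it lies in $\chi+X$.

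\textbf{Codimension.} The composite $U(\g)\onto\A\onto\A/\Phi(I_\eta)\A$ is surjective, and its kernel is exactly $I+I_\eta U(\g)$. Hence
\[
\codim_{U(\g)}\Theta(I)=\dim \A/\Phi(I_\eta)\A = p^{d_\chi}
\]
by Proposition~\ref{P:fibredimension}(ii), using $\eta\in\O$. In particular the quotient is nonzero, so $\Theta(I)$ is a proper ideal.

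\textbf{Primitivity.} This is the step that needs care. Since $I_\eta U(\g)\subseteq\Theta(I)$, the quotient $U(\g)/\Theta(I)$ is a quotient of $U_\eta(\g)$. Take any maximal ideal $M\supseteq\Theta(I)$. Then $U(\g)/M$ is a simple algebra, isomorphic to $\End_\k(L)$ for a simple $U_\eta(\g)$-module $L$ (Lemma~\ref{L:simplesprimitivesmaximals}). By \eqref{eq:Premetstheorem} together with $\dim\O_\eta=\dim\O=d_\chi$ (because $\O\subseteq\bbS_\chi$), we get $\dim(L)^2\ge p^{d_\chi}$. So $\codim M\ge p^{d_\chi}=\codim\Theta(I)$, and since $\Theta(I)\subseteq M$ this forces $M=\Theta(I)$. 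Thus $\Theta(I)$ is maximal, hence primitive by Lemma~\ref{L:simplesprimitivesmaximals}(2).

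\textbf{Central character.} The intersection $\Theta(I)\cap Z_p(\g)$ contains the maximal ideal $I_\eta$ and is proper, since $1\notin\Theta(I)$. Therefore it equals $I_\eta$, and $\eta\in\chi+X$. Combining the three steps, $\Theta(I)\in\Prim_{\chi+X}^{p^{d_\chi}}U(\g)$.

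The only point that needs genuine input is the lower bound on the dimension of simple $U_\eta(\g)$-modules. It relies on $\eta$ lying in an orbit of dimension exactly $d_\chi$, and that in turn comes from Proposition~\ref{P:psupport} and the sheet condition.
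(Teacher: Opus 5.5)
Your proposal is correct and follows essentially the same route as the paper: the codimension $p^{d_\chi}$ comes from Proposition~\ref{P:fibredimension}(ii), maximality (hence primitivity) follows by minimality of codimension against the lower bound on dimensions of simple $U_\eta(\g)$-modules, and $\Theta(I)\cap Z_p(\g)=I_\eta$. You also make two points explicit that the paper leaves implicit: $\dim\O_\eta=d_\chi$ because $\O\subseteq\bbS_\chi$, and the intersection with $Z_p(\g)$ is exactly $I_\eta$ because $I_\eta$ is maximal and $\Theta(I)$ is proper.
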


\subsection{The coordinate ring of an orbit as an induced module}\label{ss:coordringdesc}

For a subgroup scheme $H \subseteq G$ and $M \in H\lmod$ we use the standard notation for induction (see \cite[\textsection I.3.2]{JanRAGS} for more details):
\begin{eqnarray}
    \Ind_H^G(M) := (\k[G] \otimes M)^H \in G\lmod.
\end{eqnarray}
Here, $H$ acts on $\k[G]$ via the right regular representation and on $M$ via the given $H$-module structure, while $G$ acts on $\Ind_H^G(M)$ via the left regular representation on $\k[G]$ and via the trivial $G$-module structure on $M$. If $M$ is a $\k$-algebra and $H$ acts on $M$ via algebra automorphisms, then clearly $\Ind_H^G(M)$ has the structure of a $\k$-algebra. Similarly, if $M$ is a Poisson $\k$-algebra and $H$ acts on $M$ via Poisson automorphisms, then $\Ind_H^G(M)$ can be equipped with the structure of a Poisson $\k$-algebra with $\{f\otimes m,f'\otimes m'\}=ff'\otimes\{m,m'\}$.

Recall that by Lemma~\ref{L:PremetSkryabin} there is a unique maximal Poisson ideal $P_\chi \lhd \k_\chi[\g^*] = \k[\g^*] / J_\chi$, where $J_\chi = (x^p - \chi(x)^p \mid x\in \g)$. The quotient $\k_\chi[\g^*] / P_\chi$ is then a simple Poisson algebra of dimension $p^{d_\chi}$, on which $G^{F(\chi)}$ acts by Poisson automorphisms. As a $G^{F(\chi)}$-algebra, it has the following alternative description. 

\begin{Lemma}
\label{L:inducedsimplePoissonquotient}
    $\Ind_{G^\chi}^{G^{F(\chi)}}(\k) \isoto \k_\chi[\g^*]/P_\chi$ as $G^{F(\chi)}$-algebras.
\end{Lemma}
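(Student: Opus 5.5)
We identify $\Ind_{G^\chi}^{G^{F(\chi)}}(\k) = \k[G^{F(\chi)}]^{G^\chi} = \k[G^{F(\chi)}/G^\chi]$, the coordinate ring of a finite (infinitesimal-type) homogeneous scheme. Using \eqref{eq:coordfrobstabiliserfibreproduct}, the scheme $G^{F(\chi)}/G^\chi$ is the fibre of the Frobenius-type map $\O_\chi = G/G^\chi \to G/G^{F(\chi)} = \O_\chi^{(1)}$ over the base point. Equivalently, it is the scheme-theoretic fibre of $F:\O_\chi \to \O_\chi^{(1)}$ over $F(\chi)$. Its coordinate ring is therefore
$$\k[\O_\chi]\otimes_{\k[\O_\chi]^p}\k = \k[\O_\chi]/(f^p - f(\chi)^p \mid f\in \k[\O_\chi]).$$
Since $\O_\chi$ is smooth of dimension $d_\chi$, this is a local algebra of dimension $p^{d_\chi}$ (locally $\k[t_1,\dots,t_{d_\chi}]/(t_i^p)$).

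We compare this with $\k_\chi[\g^*]/P_\chi$. Restriction of functions gives a $G^{F(\chi)}$-equivariant algebra map $\k[\g^*]\to \k[\O_\chi] \to \k[G^{F(\chi)}/G^\chi]$. Here $\k[\g^*]\to\k[\O_\chi]$ is the orbit map pulled back, and the equivariance holds because $G^{F(\chi)}$ stabilises the fibre. This map sends $x^p-\chi(x)^p$ to $0$, since $x^p$ restricted to $\O_\chi$ is a $p$th power of a function and so is constant on Frobenius fibres. Hence it factors through $\k_\chi[\g^*]$. We then check surjectivity. The composite $\k[\g^*]\to\k[\O_\chi]$ is surjective by Lemma~\ref{L:normalorbits} (normality gives $\k[\cO_\chi]=\k[\O_\chi]$, and $\k[\g^*]\onto\k[\cO_\chi]$), and quotienting preserves surjectivity. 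So we obtain an equivariant surjection $\psi:\k_\chi[\g^*]\onto \k[G^{F(\chi)}/G^\chi]$.

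Next we show that $\ker\psi$ is a Poisson ideal. Since $\g$-stability is equivalent to being Poisson, as noted before Lemma~\ref{L:PremetSkryabin}, and $\psi$ is $G^{F(\chi)}$-equivariant, hence $G_1$- and so $\g$-equivariant, the kernel is $\g$-stable. It is proper, so $\ker\psi\subseteq P_\chi$. Comparing dimensions, the target has dimension $p^{d_\chi}$ and $\k_\chi[\g^*]/P_\chi$ also has dimension $p^{d_\chi}$ by Lemma~\ref{L:PremetSkryabin}. This forces $\ker\psi=P_\chi$, giving the isomorphism of $G^{F(\chi)}$-algebras. It is compatible with the Poisson structures if we equip the target with the bracket restricted from $\O_\chi$, though only the algebra statement is required.

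The main obstacle is the scheme-theoretic identification of $\Ind_{G^\chi}^{G^{F(\chi)}}\k$ with the Frobenius fibre and its dimension count. We will handle it via the fibre product \eqref{eq:frobstabiliserfibreproduct}:
$$G^{F(\chi)}/G^\chi \cong G/G^\chi\times_{G^{(1)}/(G^\chi)^{(1)}}\{pt\},$$
using $G/G^\chi\cong\O_\chi$ from \eqref{eq:quotientisorbit} and its Frobenius twist. Flatness of $F$ on the smooth variety $\O_\chi$ gives degree $p^{d_\chi}$. We also need that the map to $\k[\O_\chi]$ sends $x^p$ into $\k[\O_\chi]^p$ compatibly with $\chi$, which is immediate.
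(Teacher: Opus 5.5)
Your proof is correct and follows essentially the same route as the paper. Both identify $\Ind_{G^\chi}^{G^{F(\chi)}}(\k)$ with the coordinate ring of the scheme-theoretic Frobenius fibre $F^{-1}F(\chi)$ of the orbit, which has dimension $p^{d_\chi}$; then $\k_\chi[\g^*]$ surjects onto it with Poisson ($\g$-stable) kernel, and the uniqueness and codimension statement of Lemma~\ref{L:PremetSkryabin} finishes the argument. The differences are cosmetic: you get the fibre identification from the fibre product \eqref{eq:frobstabiliserfibreproduct} rather than from transitivity of $G^{F(\chi)}$ on the fibre, and you invoke normality for surjectivity, which the paper sidesteps by working with $\k[\cO_\chi]/\mu^*(J_\chi)$ directly.
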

    \begin{proof}

    We prove in three steps that such an isomorphism of $G^{F(\chi)}$-algebras exists. First we recall some notation: we set $\mu^*:\k[\g^*]\twoheadrightarrow\k[\o\O_\chi]$ to be the comoment map, and $F_{\o\O_\chi}:\o\O_\chi\to\o\O_\chi^{(1)}$ the Frobenius morphism on $\o\O_\chi$.

    \noindent{\bf Step 1:} $\k[\o\O_\chi]/\mu^*(J_\chi)=\k[F_{\o\O_\chi}^{-1}F_{\o\O_\chi}(\chi)].$

    Since $\mu^*(J_\chi)$ is generated by $p$-th powers of generators of the defining ideal of the closed point $\chi\in\o\O_\chi$, this is \cite[I.9.3]{JanRAGS} applied to $X=\cO_\chi$.

    \noindent{\bf Step 2:} $\k_\chi(\g)/P_\chi\iso \k[\o\O_\chi]/\mu^*(J_\chi)$.

   The Poisson surjection $\k[\g^*]\twoheadrightarrow \k[\cO_\chi]/\mu^*(J_\chi)$ factors through $\k_\chi[\g^*]\twoheadrightarrow \k[\cO_\chi]/\mu^*(J_\chi)$. By Step 1 and \cite[I.9.3(3)]{JanRAGS}, $\dim(\k[\cO_\chi]/\mu^*(J_\chi))=p^{d_\chi}$. The claim follows from Lemma~\ref{L:PremetSkryabin}.

    \noindent{\bf Step 3:} $\k[F_{\o\O_\chi}^{-1}F_{\o\O_\chi}(\chi)]=\Ind_{G^\chi}^{G^{F(\chi)}}(\k)$.
   
   Since $F_{\o\O_\chi}^{-1}F_{\o\O_\chi}(\chi)\subseteq \O_\chi$, we easily obtain that $G^{F(\chi)}$ acts transitively on it and thus that $F_{\o\O_\chi}^{-1}F_{\o\O_\chi}(\chi)\iso G^{F(\chi)}/G^\chi$. Thus, $$\k[F_{\o\O_\chi}^{-1}F_{\o\O_\chi}(\chi)]=\k[G^{F(\chi)}/G^\chi]=\k[G^{F(\chi)}]^{G^\chi}=\Ind_{G^\chi}^{G^{F(\chi)}}(\k).$$

   Combining Steps (1)--(3) yields the isomorphism of $G^{F(\chi)}$-algebras. 

\end{proof}
For the following result, give $\Ind_{G^{F(\chi)}}^G (\k_\chi[\g^*] / P_\chi)$ the Poisson algebra structure induced from that of $\k_\chi[\g^*] / P_\chi$.

\begin{Corollary}
\label{C:coordasinduced}
    $\Ind_{G^{F(\chi)}}^G (\k_\chi[\g^*] / P_\chi) \isoto \k[\O_\chi]$ as Poisson $G$-algebras.
\end{Corollary}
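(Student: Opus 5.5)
The plan is to combine Lemma~\ref{L:inducedsimplePoissonquotient} with transitivity of induction. By that lemma,
\[
\Ind_{G^{F(\chi)}}^G(\k_\chi[\g^*]/P_\chi)\cong \Ind_{G^{F(\chi)}}^G\Ind_{G^\chi}^{G^{F(\chi)}}(\k),
\]
and transitivity of induction \cite[I.3.5]{JanRAGS} identifies the right-hand side with $\Ind_{G^\chi}^G(\k)=\k[G]^{G^\chi}=\k[G/G^\chi]$. By \eqref{eq:quotientisorbit} this is $\k[\O_\chi]$.

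We must check that these identifications are isomorphisms of $G$-algebras. Each step is given by explicit maps. The first is $\Ind_{G^{F(\chi)}}^G$ applied to an isomorphism of $G^{F(\chi)}$-algebras. The second comes from the transitivity isomorphism, which on algebras sends
\[
(\k[G]\otimes \k[G^{F(\chi)}]^{G^\chi})^{G^{F(\chi)}} \longrightarrow \k[G]^{G^\chi}
\]
via the counit of $\k[G^{F(\chi)}]$ (evaluation at $1$). Both respect multiplication, since the induced algebra structures are defined componentwise and evaluation at the identity is an algebra map.

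The real content is the Poisson compatibility. I would make the composite concrete. An element of the induced algebra is a $G^{F(\chi)}$-equivariant function $f:G\to \k_\chi[\g^*]/P_\chi$. By Steps 1--3 of the lemma, $\k_\chi[\g^*]/P_\chi$ is the function algebra of the Frobenius fibre $F^{-1}F(\chi)\subseteq \O_\chi$. The composite map to $\k[\O_\chi]$ sends $f$ to the function $g\cdot\chi\mapsto f(g)(\chi)$. Equivalently, it is the $G$-equivariant map which, composed with restriction $\k[\O_\chi]\to \k[F^{-1}F(\chi)]$ and evaluation at $1$, recovers $f(1)$.

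To check that the Poisson brackets agree, I would use that both sides are generated (after localisation, or at least up to the identification) in a way controlled by the comoment map. The comoment map $\mu^*:\k[\g^*]\to\k[\O_\chi]$ is Poisson. It also corresponds to the map $\k[\g^*]\to\Ind(\k_\chi[\g^*]/P_\chi)$ sending $\phi$ to $g\mapsto \overline{g^{-1}\cdot\phi}$, which is Poisson by construction of the induced bracket. The Poisson structure on $\k[\O_\chi]$ is the unique one compatible with the Hamiltonian $G$-action. A cleaner route is to compare brackets pointwise. The bracket on $\k[\O_\chi]$ at a point $g\chi$ is computed from the symplectic form on $T_{g\chi}\O_\chi$. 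The induced bracket at $g$ is computed in the fibre algebra $g\cdot(\k_\chi[\g^*]/P_\chi)$, which is the algebra of functions on the infinitesimal (Frobenius) neighbourhood of $g\chi$ in $\O_\chi$. The quotient map $\k[\O_\chi]\to\k[F^{-1}F(g\chi)]$ is Poisson, since by Step 2 it is a quotient by a Poisson ideal. A bracket of functions evaluated at $g\chi$ therefore depends only on their images in that Frobenius fibre. This gives equality of brackets.

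The main obstacle is making this last step rigorous. In the formula $\{f\otimes m,f'\otimes m'\}=ff'\otimes\{m,m'\}$ the $\k[G]$-part is treated as Poisson-central, so one needs that $p$-th powers, and hence "directions transverse to the Frobenius fibre", are Casimirs. I would first try to derive this from Lemma~\ref{L:casimirspthpowers}, which states that $\k[\O_\chi]^p$ is the Poisson centre. With that, $\k[\O_\chi]$ is a Poisson algebra over $\k[\O_\chi^{(1)}]=\k[G/G^{F(\chi)}]$. Its fibres are Poisson quotients $\k[\O_\chi]/\mu^*(J_{g\chi})$, and the induction isomorphism is fibrewise the identity on these quotients, so brackets agree.
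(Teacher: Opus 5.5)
Your proposal is correct and is essentially the paper's argument. The $G$-algebra isomorphism comes from Lemma~\ref{L:inducedsimplePoissonquotient} plus transitivity of induction, exactly as in the paper. Your fibrewise bracket check is an unpacked form of the paper's observation: under Frobenius reciprocity the isomorphism corresponds to the Poisson projection $\k[\O_\chi]\twoheadrightarrow\k[\O_\chi]/\mu^*(J_\chi)$, and reciprocity preserves Poisson maps because $G$ acts by Poisson automorphisms.

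The ``main obstacle'' you flag is not one. In any Poisson algebra over $\k$, $p$-th powers are Casimirs by the Leibniz rule, $\{f^p,g\}=pf^{p-1}\{f,g\}=0$, so you do not need Lemma~\ref{L:casimirspthpowers} at all.
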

\begin{proof}
    By \cite[I.3.5(2) \& I.5.6(2)]{JanRAGS} and Lemma~\ref{L:inducedsimplePoissonquotient} we have
    \begin{equation}\label{eq:coordeq}
    \k[\O_\chi] = \k[G / G^\chi] = \Ind_{G^\chi}^G(\k) = \Ind_{G^{F(\chi)}}^G \Ind_{G^\chi}^{G^{F(\chi)}} (\k) = \Ind_{G^{F(\chi)}}^G (\k_\chi[\g^*] / P_\chi)
    \end{equation}
    as $G$-algebras. 
    
    What remains is to show that the composition is a Poisson homomorphism. Frobenius reciprocity yields bijections $$\Hom_G(\k[\O_\chi],\Ind_{G^{F(\chi)}}^G(\k_\chi[\g^*]/P_\chi))\simeq\Hom_{G^{F(\chi)}}(\k[\O_\chi],\k_\chi[\g^*]/P_\chi)\simeq\Hom_{G^{F(\chi)}}(\k[\O_\chi],\k[\O_\chi]/\mu^*(J_\chi)).$$ The claim then follows from two observations: (1) Frobenius reciprocity identifies Poisson homomorphisms with Poisson homomorphisms, and (2) the identification \eqref{eq:coordeq} corresponds under Frobenius reciprocity to the natural projection $\k[\O_\chi]\twoheadrightarrow \k[\O_\chi]/\mu^*(J_\chi)$ (which is a Poisson homomorphism). The first observation follows from the fact that $G$ acts on $\k[\O_\chi]$ by Poisson automorphisms, and the second is straightforward to check using Lemma~\ref{L:inducedsimplePoissonquotient}.
\end{proof}

We now equip $\Ind_{G^{F(\chi)}}^G (\k_\chi[\g^*] / P_\chi)$ with a grading such that Corollary~\ref{C:coordasinduced} yields a isomorphism of graded Poisson algebras. To do so in a natural way, we must work with the Kazhdan grading on $\k[\O_\chi]$. 

Recall that $\lambda_e:\k^\times\to G$ is an associated cocharacter for $\chi$. Equip $\k[G]$ with the grading coming from the $\k^\times$-action $\k^\times\times G\to G$, $(t,g)\mapsto \lambda_e(t)g\lambda_e(t)^{-1}$. The Poisson algebra $\k_\chi[\g^*]$ is equipped with the Kazhdan grading. Since the corresponding $\k^\times$-action on $\k_\chi[\g^*]$ induces a $\k^\times$-action on the set of its Poisson ideals, the unique maximal Poisson ideal $P_\chi$ is graded; hence the Kazhdan grading descends to $\k_\chi[\g^*]/P_\chi$. As in Subsection~\ref{ss:filteredgradedspaces}, $\k[G]\otimes \k_\chi[\g^*]/P_\chi$ becomes a graded algebra.

Also notice that $\k[\O_\chi] = \k[\cO_\chi]$ (Lemma~\ref{L:normalorbits}) is $\gamma_e(\k^\times)$-stable and so $\k[\O_\chi]$ inherits the Kazhdan grading from $\k[\g^*]$, by the remarks following \eqref{eq:Kazhdanaction}. Equivalently, we can equip $\k[\O_\chi]$ with the Kazhdan grading by twisting the PBW grading by $\lambda = \lambda_e$.

\begin{Lemma}\label{L:coordgrad}\label{C:coordisom}
\begin{enumerate}
\setlength{\itemsep}{4pt}
    \item     $\Ind_{G^{F(\chi)}}^G (\k_\chi[\g^*] / P_\chi)$ is a graded subalgebra of $\k[G]\otimes \k_\chi[\g^*]/P_\chi$.
    \item The isomorphism $\Ind_{G^{F(\chi)}}^G (\k_\chi[\g^*] / P_\chi) \isoto \k[\O_\chi]$ from Corollary~\ref{C:coordasinduced} is Kazhdan graded.
\end{enumerate}

\end{Lemma}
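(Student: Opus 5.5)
For (1), I would show that the $G^{F(\chi)}$-invariants in $\k[G]\otimes \k_\chi[\g^*]/P_\chi$ form a graded subspace. Since $\k[G]\otimes \k_\chi[\g^*]/P_\chi$ is already a graded algebra, this amounts to showing that the subspace is stable under the grading $\k^\times$-action. That action is $\tau(t) := \mathrm{Ad}$-conjugation by $\lambda_e(t)$ on $\k[G]$, tensored with the Kazhdan action $\gamma_e(t)$ on $\k_\chi[\g^*]/P_\chi$.

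The key observation is that $\lambda_e(t)$ normalises $G^\chi$, since $\lambda_e(t)\cdot\chi = t^{-2}\chi$ and stabilisers of scalar multiples coincide. It therefore also normalises $G^{F(\chi)}$ and $G_1$. Moreover, the Kazhdan action on $\k_\chi[\g^*]$ is the coadjoint action of $\lambda_e(t)$ composed with the scalar action, so it intertwines the $G^{F(\chi)}$-action with its conjugate by $\lambda_e(t)$. Concretely, I would write the invariance condition via the coaction $\Delta_M : M \to M\otimes \k[G^{F(\chi)}]$ together with the right regular coaction on $\k[G]$. I would then check that both coactions are graded, where $\k[G^{F(\chi)}] = \k[G]\otimes_{\k[G]^p}\k[G^\chi]^p$ carries the grading induced from conjugation by $\lambda_e$; this grading is well defined because the relevant ideal is $\lambda_e$-stable. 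A homogeneous decomposition of an invariant element then has invariant homogeneous components, since the condition ``$\Delta(m) = m\otimes 1$'' is a graded linear condition. This proves (1), and the subalgebra property is automatic.

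For (2), I would trace the isomorphism of Corollary~\ref{C:coordasinduced} through its three constituents and check that each is $\k^\times$-equivariant for the appropriate actions. On $\k[\O_\chi]=\k[G]^{G^\chi}$, the $\k^\times$-action is $g G^\chi \mapsto \lambda_e(t) g\lambda_e(t)^{-1}G^\chi$, twisted by the scalar factor. Under the orbit map $gG^\chi\mapsto g\cdot\chi$ this becomes $g\cdot\chi\mapsto t^{2}\lambda_e(t)g\cdot \chi$ after rescaling, that is, $\gamma_e(t)$, because $\lambda_e(t)^{-1}$ fixes $\chi$ up to the scalar $t^{2}$. Hence the identification $\k[\O_\chi]\cong\k[G]^{G^\chi}$ is Kazhdan graded, provided the grading on $\k[G]^{G^\chi}$ is taken to include the correct scalar twist.

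The transitivity-of-induction isomorphism and Frobenius reciprocity are natural in the group data. They commute with automorphisms of the triple $(G, G^{F(\chi)}, G^\chi)$ compatible with the module actions, and conjugation by $\lambda_e(t)$ is such an automorphism. The identification in Lemma~\ref{L:inducedsimplePoissonquotient} comes from the restriction $\k[\O_\chi]\to\k[\O_\chi]/\mu^*(J_\chi)$. This map is graded because $J_\chi$ and $P_\chi$ are Kazhdan graded. So the composite is equivariant, hence graded.

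The main obstacle I expect is bookkeeping the grading conventions. The grading on $\Ind$ uses only conjugation on $\k[G]$ together with the Kazhdan grading on the fibre, while $\k[\O_\chi]$ carries the Kazhdan grading, which includes the scalar factor $t^2$. I must verify that evaluation at the base point converts one convention into the other. I would first check this on the generators $x\in\g(i)\subset\k[\g^*]$, computing the image of $x$ in $\k[G]\otimes\k_\chi[\g^*]/P_\chi$ as $\sum f_j\otimes \bar x_j$ via the coaction, and confirming that it is homogeneous of degree $i+2$. Since these generators generate $\k[\O_\chi]$, this suffices for (2).
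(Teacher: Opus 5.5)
Your proposal is correct and is essentially the paper's argument. For (1), the paper also shows that the $\k^\times$-action (conjugation by $\lambda_e$ on $\k[G]$, Kazhdan action on $\k_\chi[\g^*]/P_\chi$) normalises the $G^{F(\chi)}$-action, so the invariants are $\k^\times$-stable and hence graded; your coaction formulation is the scheme-theoretic version of this. For (2), the paper uses precisely your final check on generators, namely that the action map $G\times\g^*\to\g^*$ is $\k^\times$-equivariant for $(g,\eta)\mapsto(\lambda_e(t)g\lambda_e(t)^{-1},t^{-2}\lambda_e(t)\eta)$, so tracing the separate constituents of Corollary~\ref{C:coordasinduced} is unnecessary.

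One small correction to your bookkeeping: since $e\in\g(2)$, one has $\lambda_e(t)\cdot\chi=t^{2}\chi$, not $t^{-2}\chi$. So the Kazhdan action on $\g^*$ is $t^{-2}\lambda_e(t)$, and on $\O_\chi\cong G/G^\chi$ it coincides exactly with conjugation by $\lambda_e(t)$; no extra scalar twist on $\k[G]^{G^\chi}$ is required.
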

\begin{proof}
    Note first that $G^{F(\chi)}$ is normalised inside $G$ by $\lambda_e(\k^\times)$. Define a $G^{F(\chi)}$-action and a $\k^\times$-action on $X:=G\times \g^*$ by $h\cdot(g,\eta)=(gh^{-1},h\eta)$ and $t\cdot(g,\eta)=(\lambda_e(t)g\lambda_e(t)^{-1},t^{-2}\lambda_e(t)\eta)$, for $t\in \k^\times$ and $h\in G^{F(\chi)}$. 
    
    Write $\tau:G^{F(\chi)}\to\Aut(X)$ and $\sigma:\k^\times\to \Aut(X)$ for the corresponding maps. It is straightforward to check that $\sigma(\k^\times)$ normalises $\tau(G^{F(\chi)})$ inside $\Aut(X)$. This implies that the corresponding $\k^\times$-action on $\k[G]\otimes \k[\g^*]$ normalises the $G^{F(\chi)}$-action.

    The actions of $\k^\times$ and $G^{F(\chi)}$ on $\k[G]\otimes \k_\chi[\g^*]/P_\chi$ are inherited from $\tau$ and $\sigma$, thus we deduce the $\k^\times$-action on $\k[G]\otimes \k_\chi[\g^*]/P_\chi$  normalises the $G^{F(\chi)}$-action. This suffices to prove (1).

    Part (2) follows from the fact that the coadjoint action map $G\times \g^*\to \g^*$ is $\k^\times$-equivariant with respect to the $\k^\times$-actions $\tau$ and $\sigma$.
\end{proof}

\subsection{Constructing quantizations as induced modules}\label{ss:primidlstoquants}

This section contains one of the key constructions of the paper. Corollary~\ref{C:coordasinduced} tells us that we can construct the coordinate ring of $\O_\chi$ as a Poisson algebra by starting with a Poisson ideal of $\k[\g^*]$ of codimension $p^{d_\chi}$ which contains $J_\chi$. We demonstrate in this section that this procedure is amenable to quantization.

\begin{Lemma}\label{L:actiondescends}
    For $I \in \Prim_\eta U(\g)$, the action $G^{F(\eta)} \curvearrowright U(\g)$ descends to an action on $U(\g)/I$.
\end{Lemma}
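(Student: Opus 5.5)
The approach is to reduce the claim to showing that $I$ is stable under $G^{F(\eta)}$. Recall from Subsection~\ref{ss:twistedcoadjoint} that $G^{F(\eta)}$ is generated, as a group scheme, by the closed subgroup schemes $G_1$ and $G^\eta$. Moreover, $G$ acts rationally on $U(\g)$ by algebra automorphisms. If $I$ is stable under each of these two subgroup schemes, then its stabiliser in $G$ is a closed subgroup scheme containing both, hence containing $G^{F(\eta)}$. The action then descends to $U(\g)/I$ by algebra automorphisms. Here stability under a subgroup scheme $H$ means that the coaction sends $I$ into $I\otimes \k[H]$, and the stabiliser of a subspace is a closed subgroup scheme (cf. \cite[I.2.12]{JanRAGS}). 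So the proof splits into two cases.

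\textbf{Stability under $G_1$.} A $G_1$-module structure is the same as a restricted $\g$-module structure (equivalently, a $\mathrm{Dist}(G_1)=u(\g)$-module structure). The differentiated action of $G$ on $U(\g)$ is $\ad$, that is $x\mapsto [x,-]$. Every two-sided ideal is stable under $\ad(x)$ for all $x\in\g$, so $I$ is a $u(\g)$-submodule of $U(\g)$. Since $U(\g)$ is a locally finite $G$-module, this means $I$ is $G_1$-stable. This step is routine.

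\textbf{Stability under $G^\eta$.} This is the main step. Let $g\in G^\eta(\k)$; since $G^\eta$ is smooth by \eqref{L:smoothstabilisers}, it suffices to check $\k$-points, because a reduced group scheme stabilises a subspace if and only if its points do. The ideal $g\cdot I$ is again primitive, being the annihilator of the twisted module $L^g$, where $L$ is the simple module with $\Ann L = I$ (Lemma~\ref{L:simplesprimitivesmaximals}). Its $p$-central character is $g\cdot\eta=\eta$, since $\xi$ is $G$-equivariant. So $L^g$ is a simple $U_\eta(\g)$-module of the same dimension as $L$.

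What remains is to show $L^g\cong L$, and this is the obstacle: in general $G^\eta$ may permute the simple $U_\eta(\g)$-modules. To handle it, I would argue that $G^\eta$ is connected. For $\GL_N$, centralisers $G^x$ are unit groups of the associative algebras $\k[x]$-commutants, hence open in a vector space and therefore connected. The group $G^\eta(\k)$ is thus connected, and it acts on the set $\Irr U_\eta(\g)$, which is finite since $U_\eta(\g)$ is finite-dimensional. I then need this action to be algebraic, so that the orbit map from a connected group to a finite set is constant. For this I would use the identification of isoclasses of simple $U_\eta(\g)$-modules with the finitely many maximal ideals of $U_\eta(\g)$. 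The set of $g\in G^\eta$ with $g\cdot I=I$ is the stabiliser of a point in the Grassmannian of codimension-$\dim U_\eta(\g)/I$ subspaces of the finite-dimensional $G^\eta$-module $U_\eta(\g)$. It is therefore a closed subgroup of finite index, since the orbit lies in the finite set of maximal ideals. By connectedness it is all of $G^\eta$.

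Combining the two cases, $I$ is $G^{F(\eta)}$-stable, and the action descends to $U(\g)/I$. This argument works for any $\eta\in\g^*$, not only nilpotent $\eta$.
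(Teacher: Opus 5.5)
Your proposal is correct and follows essentially the same route as the paper. Both reduce to stability of $I$ under the generating subgroup schemes $G_1$ and $G^\eta$ of $G^{F(\eta)}$. Stability under $G_1$ is automatic because $I$ is $\ad(\g)$-stable. For $G^\eta$, it permutes the finitely many primitive ideals lying over $\eta$ and is connected for $\GL_N$, hence acts trivially. Your Grassmannian argument, showing the stabiliser of $I$ in $G^\eta$ is closed of finite index, simply spells out the paper's remark that $(G^\eta)^\circ$ must act trivially on a finite set.
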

\begin{proof}
First of all, note that $G^\eta \curvearrowright U_\eta(\g)$ for $\eta\in \g^*$, and therefore $G^\eta$ acts by conjugation on the set of primitive ideals $\Prim_\eta U(\g)$. Since $U_\eta(\g)$ is finite-dimensional the set is finite, and the connected component $(G^\eta)^\circ$ acts trivially. For $G = \GL_N$ all coadjoint centralisers are connected, and thus $G^\eta$ acts trivially.

    Recall from Section~\ref{ss:twistedcoadjoint} that $G^{F(\eta)}$ is generated by the subgroups $G_1$ and $G^\eta$. Therefore $G^{F(\eta)}$ acts on any space where $G_1$ and $G^\eta$ act compatibly (i.e. on any module for the Harish-Chandra pair $(G^\eta, \g)$). They do so on $U(\g)$ and hence on any quotient stabilised by both $\ad(\g)$ and $G^\eta$. By the remarks of the previous paragraph, this holds for $U(\g)/I$.
\end{proof}

Let $\eta\in\chi+X$ be an element in the Katsylo section. For $I\in\Prim_{\eta}^{p^{d_\chi}}U(\g)$ we define
\begin{eqnarray}
\label{eq:defineAI}
    \A_I := \Ind_{G^{F(\eta)}}^G (U(\g)/I).
\end{eqnarray} Note that Frobenius reciprocity \cite[Proposition I.3.4]{JanRAGS} gives a natural algebra homomorphism $\Phi_I:U(\g)\to \cA_I$.

For the next theorem, we equip $\k[\O_\chi]$ (and other algebras arising in the proof) with the Kazhdan grading so that we may apply Corollary~\ref{C:coordisom}. Recall that the Kazhdan grading on $\k[\O_\chi]$ may be obtained from the PBW grading via the twisting construction from Subsection~\ref{ss:filteredgradedspaces}. We omit twisting notation for simplicity.
\begin{Theorem}\label{T:grAI}
    $\A_I$ admits a filtration such that $\gr \A_I = \k[\O_\chi]$, where $\k[\O_\chi]$ is Kazhdan graded.
\end{Theorem}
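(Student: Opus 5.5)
We filter $\A_I$ as a subspace of $\k[G]\otimes U(\g)/I$, where $\k[G]$ is graded by $\lambda_e$-conjugation eigenspaces and $U(\g)/I$ carries the quotient of the Kazhdan filtration on $U(\g)$. The strategy is to compare with the graded picture of Lemma~\ref{L:coordgrad}: we show $\gr\A_I$ embeds into $\Ind_{G^{F(\chi)}}^G(\k_\chi[\g^*]/P_\chi)$, and then conclude equality by a dimension count over the $p$-centre.

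\textbf{Step 1: the fibre algebra.} Since $\eta\in\chi+X\subseteq\chi+\kappa\g(\le 1)$, Lemma~\ref{L:grUchi} gives $\gr U_\eta(\g)\cong\k_\chi[\g^*]$ as Kazhdan graded algebras. Hence $\gr(U(\g)/I)$ is a graded quotient of $\k_\chi[\g^*]$ by a Poisson ideal, because the commutator induces the bracket. This ideal has codimension $\dim U(\g)/I=p^{d_\chi}$. By Lemma~\ref{L:PremetSkryabin} any proper Poisson ideal lies in $P_\chi$, and $P_\chi$ itself has codimension $p^{d_\chi}$, so $\gr(U(\g)/I)=\k_\chi[\g^*]/P_\chi$.

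\textbf{Step 2: degenerating the group action.} Next we check that the coaction $U(\g)/I\to\k[G^{F(\eta)}]\otimes U(\g)/I$ is filtered. The target is filtered via $\k[G^{F(\eta)}]=\k[G]\otimes_{\k[G]^p}\k[G^\eta]^p$ from \eqref{eq:coordfrobstabiliserfibreproduct}. The associated graded of this coaction should be the coaction of $G^{F(\chi)}$. For this we use Lemma~\ref{L:coneisGchi}, namely $\gr\k[G^\eta]=\k[G^\chi]$, which gives $\gr\k[G^{F(\eta)}]=\k[G^{F(\chi)}]$ and $\gr I_{G^{F(\eta)}}=I_{G^{F(\chi)}}$. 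We then run the argument of Corollary~\ref{C:gradedHopfcentraliser} verbatim. An element $m\in\k[G]\otimes U(\g)/I$ is invariant if and only if $\Delta'(m)-m\otimes 1$ lies in the ideal generated by $I_{G^{F(\eta)}}$, where $\Delta'$ is the twisted coaction. Strictness of the graded coaction, together with filtered bases of $\k[G]$ and of $U(\g)/I$ (the latter is finite-dimensional), yields
\[
\gr\A_I\subseteq\bigl(\k[G]\otimes\k_\chi[\g^*]/P_\chi\bigr)^{G^{F(\chi)}}=\Ind_{G^{F(\chi)}}^G(\k_\chi[\g^*]/P_\chi)\cong\k[\O_\chi],
\]
where the last isomorphism is Kazhdan graded by Corollary~\ref{C:coordasinduced} and Lemma~\ref{C:coordisom}. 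The filtration is exhaustive and separated because it is the subspace filtration of one with a filtered basis.

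\textbf{Step 3: equality.} This is the main obstacle. An injection of graded algebras into $\k[\O_\chi]$ need not be onto, so we compare sizes in a way that survives $\gr$. Both algebras carry compatible $\k[G]^p$-type structures. The plan is to show that $\A_I$ is a module over $\Ind_{G^{F(\eta)}}^G(\k)=\k[G/G^{F(\eta)}]=\k[\O_\eta^{(1)}]$ (the functions on $G$ inside $\A_I$) of generic rank $p^{d_\chi}$. To see this, note that $\A_I$ is the module of sections of the homogeneous bundle $G\times^{G^{F(\eta)}}(U(\g)/I)$ over $G/G^{F(\eta)}$, which is locally free of rank $p^{d_\chi}$. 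Likewise, $\k[\O_\chi]$ has rank $p^{d_\chi}$ over $\k[\O_\chi]^p$, and the Frobenius-twisted versions degenerate one to the other by Corollary~\ref{C:gradedHopfcentraliser}. Since rank is preserved under $\gr$ (as in Proposition~\ref{P:fibredimension}), the inclusion $\gr\A_I\subseteq\k[\O_\chi]$ becomes an inclusion of modules of equal rank over $\gr\k[\O_\eta^{(1)}]\subseteq\k[\O_\chi]^p$.

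To upgrade equal rank to equality, two routes are possible. The first is to compare Hilbert series, refining Theorem~\ref{T:Katsylo}'s flatness to show $\gr\k[G/G^{F(\eta)}]=\k[G/G^{F(\chi)}]$. The second, which we would try first, is surjectivity via Frobenius reciprocity. The projection $\A_I\to U(\g)/I$ (evaluation at $1\in G$) is filtered, and its associated graded is evaluation $\k[\O_\chi]\to\k_\chi[\g^*]/P_\chi$. The image of $\gr\A_I$ is therefore a $G$-stable subalgebra that surjects onto the fibre at $\chi$ and is stable under translates. Together with normality of $\o\O_\chi$ (Lemma~\ref{L:normalorbits}), this should force equality.
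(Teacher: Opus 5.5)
Your Steps 1 and 2 match the paper's Steps 1--3 and the first half of its Step 4: the Kazhdan-filtered fibre $\gr(U(\g)/I)=\k_\chi[\g^*]/P_\chi$, the degeneration $\gr\k[G^{F(\eta)}]=\k[G^{F(\chi)}]$, and $\gr\Ind\subseteq\Ind\gr$. So you correctly obtain $\gr\A_I\subseteq\k[\O_\chi]$.

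The gap is your Step 3, which must upgrade this inclusion to an equality. Neither route you sketch does so. Equal generic rank over $\k[\O_\chi]^p$ only says the cokernel is torsion. A Hilbert series comparison is also not available, since the Kazhdan grading on $\k[\O_\chi]$ has infinite-dimensional graded pieces; for example, the restriction of $S(\g(-2))$ lies in degree $0$.

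The properties you list in the second route do not force equality either. Take $\chi\neq 0$, let $\mathfrak m_0\subseteq\k[\O_\chi]^p$ be the ideal of the cone point, and set $B=\k[\O_\chi]^p+\mathfrak m_0\k[\O_\chi]$. Then $B$ is a $G$-stable, Kazhdan graded subalgebra of full generic rank. It maps onto every fibre over $\O_\chi^{(1)}$, because $\mathfrak m_0$ is comaximal with the maximal ideal at $F(\chi)$. Normality of $\o\O_\chi$ holds in this situation too. Yet the image of $B$ in $\k[\O_\chi]/\mathfrak m_0\k[\O_\chi]$ is just $\k$, so $B$ misses the restrictions of linear functions $x\in\g$.

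The missing idea is the quantum comoment map. Frobenius reciprocity gives the $G$-equivariant algebra map $\Phi_I:U(\g)\to\A_I$ corresponding to the projection $U(\g)\to U(\g)/I$, and $\Phi_I$ is filtered for the Kazhdan filtration. Applying Frobenius reciprocity on the graded side, using Corollary~\ref{C:coordasinduced}, shows that the composite $\k[\g^*]\xrightarrow{\gr\Phi_I}\gr\A_I\hookrightarrow\k[\O_\chi]$ is the restriction map $\mu^*$. This is where your evaluation-at-$1$ observation belongs; even the surjectivity of $\gr$ of evaluation that you invoke needs $\Phi_I$. Since $\o\O_\chi$ is normal (Lemma~\ref{L:normalorbits}), $\mu^*:\k[\g^*]\to\k[\O_\chi]=\k[\o\O_\chi]$ is surjective, so the inclusion $\gr\A_I\hookrightarrow\k[\O_\chi]$ is onto. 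This is exactly how normality enters in the paper's Steps 4--5, and it makes any rank or Hilbert series comparison unnecessary.
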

\begin{proof}
    The cocharacter $\lambda_e:\k^\times\to G$ associated to $\chi$ defines a $\k^\times$-action on $G$ by conjugation and thus a grading $\k[G]=\bigoplus_{i\in\Z}\k[G]_i$. Equip $U(\g)$, and hence $U(\g)/I$, with the Kazhdan filtration. Then $\k[G]\otimes U(\g)/I$ is filtered as in Subsection~\ref{ss:filteredgradedspaces} and $$\gr(\k[G]\otimes U(\g)/I)\isoto \k[G]\otimes \gr(U(\g)/I),$$ since both tensorands admit filtered bases, see the remarks following \eqref{eq:gradedandtensors}. The inclusion $\cA_I\hookrightarrow \k[G]\otimes U(\g)/I$ induces the desired filtration.

    We show $\gr(\cA_I)=\k[\O_\chi]$ in 5 steps.

    \noindent{\bf Step 1:} $\gr(U(\g)/I)=\k_\chi[\g^*]/P_\chi$.

    Since $I\in\Prim_{\eta}^{p^{{d_\chi}}} U(\g)$, $U(\g)/I$ is a $p^{d_\chi}$-dimensional quotient of $U_\eta(\g)$. This step thus follows by combining Lemma~\ref{L:PremetSkryabin} and Lemma~\ref{L:grUchi}. 

    \noindent{\bf Step 2:} $\gr(\k[G^{F(\eta)}])=\k[G^{F(\chi)}]$.

    Using \eqref{eq:coordfrobstabiliserfibreproduct}, this follows from Lemma~\ref{L:coneisGchi} so long as $\k[G]$ is filtered free over $\k[G]^p$ (see \cite[Lemma I.8.2]{NVO}). Clearly $\k[\g]$ is filtered free over $\k[\g]^p$; that $\k[\g]_\delta$ is filtered free over $(\k[\g]_\delta)^p$ (where $\delta\in\k[\g]$ is the determinant function) follows easily from the identifications $(\k[\g]_\delta)^p=\k[\g]^p_{\delta^p}$ and $\k[\g]_\delta=\k[\g]\otimes_{\k[\g]^p}\k[\g]^p_{\delta^p}$.

    \noindent{\bf Step 3:} If $H$ and $K$ are subgroups of $G$ such that $K$ is preserved by the $\k^\times$-action and $\gr(\k[H])=\k[K]$, then $\gr\Ind_H^G(M)\subseteq \Ind_K^G\gr(M)$ for any finite-dimensional filtered $M\in H\lmod$.

    This follows by considering the associated graded homomorphism of the $\k[H]$-comodule structure map for $\k[G]\otimes M$. Note that the finite-dimensionality of $M$ ensures that $\k[G]\otimes M$ has a filtered basis, and thus \eqref{eq:gradedandtensors} gives an isomorphism.
    
   \noindent {\bf Step 4:} Combining Steps 1--3 yields $\gr(\cA_I)\subseteq \k[\O_\chi]$. Furthermore, the composition $\k[\g^*]\to\gr(\cA_I)\hookrightarrow \k[\O_\chi]$ is the comoment map for $\O_\chi$.

    The first of these observations is immediate. The second follows straightforwardly from Corollary~\ref{C:coordasinduced} and Frobenius reciprocity.

   \noindent {\bf Step 5:} $\gr(\cA_I)=\k[\O_\chi]$.

    This follows from Step 4, recalling that (for $G=\GL_N$) the comoment map $\k[\g^*]\to \k[\O_\chi]=\k[\o{\O}_\chi]$ is surjective.
\end{proof}

Combining the elements of the proof of Theorem~\ref{T:grAI}, the composition $$\gr(\cA_I)=\gr(\Ind_{G^{F(\eta)}}^G (U(\g)/I))\hookrightarrow \Ind_{G^{F(\chi)}}^G(\gr(U(\g)/I))\isoto \Ind_{G^{F(\chi)}}^G(\k_\chi[\g^*]/P_\chi)\isoto \k[\O_\chi]$$ is an isomorphism of algebras, which we denote $\iota_I$. Using the results of Subsection~\ref{ss:coordringdesc}, it is straightforward to check that $\iota_I$ is a Kazhdan graded Poisson isomorphism.

Furthermore, the proof of Theorem~\ref{T:grAI} shows that the natural $G$-action on $\A_I$ and homomorphism $\Phi_I:U(\g)\to \A_I$ satisfy (Q1) and (Q2) from Subsection~\ref{ss:gradedHamQuant}.

We collect these observations in the following corollary. Since we mainly consider $\k[\O_\chi]$ with its PBW grading, we will use the twisting notation $\k[\O_\chi]^{\lambda_e}$ for the rest of Section~\ref{s:Quants}.

\begin{Corollary}\label{C:AIQuant}
$(\cA_I,\iota_I,\Phi_I)\in\Quant^G(\k[\O_\chi]^{\lambda_e})$.
\end{Corollary}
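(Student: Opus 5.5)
The corollary amounts to checking that the four pieces of data defining a $\lambda_e$-Hamiltonian quantization are present, now for the twisted grading $\k[\O_\chi]^{\lambda_e}$. The pieces are: the filtered algebra $\A_I$, the graded Poisson isomorphism $\iota_I$, and axioms (Q1) and (Q2). The filtration on $\A_I$ is the one from Theorem~\ref{T:grAI}, namely the restriction of the tensor product filtration on $\k[G]\otimes U(\g)/I$. Here $\k[G]$ carries the $\lambda_e$-eigenspace grading and $U(\g)/I$ the Kazhdan filtration. By Lemma~\ref{L:twistingfiltrations}, the Kazhdan grading on $\k[\O_\chi]$ coincides with the $\lambda_e$-twist of the PBW grading. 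So a Kazhdan graded Poisson isomorphism $\gr\A_I\to\k[\O_\chi]$ is exactly a graded Poisson isomorphism onto $\k[\O_\chi]^{\lambda_e}$.

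\textbf{Almost commutativity.} First I would check that $\A_I$ is almost commutative of degree $-2$. The multiplication on $\A_I\subseteq \k[G]\otimes U(\g)/I$ is componentwise, and $\k[G]$ is commutative. Hence $[f\otimes u, f'\otimes u'] = ff'\otimes[u,u']$. Since the Kazhdan filtration on $U(\g)$ satisfies $[\U_i,\U_j]\subseteq \U_{i+j-2}$, the commutator lowers degree by $2$. On symbols it gives $ff'\otimes\{\bar u,\bar u'\}$, which is precisely the induced Poisson bracket on $\Ind_{G^{F(\chi)}}^G(\k_\chi[\g^*]/P_\chi)$. Combined with Corollary~\ref{C:coordasinduced}, Lemma~\ref{C:coordisom}(2) and Step 1 of Theorem~\ref{T:grAI}, this shows $\iota_I$ is a graded Poisson isomorphism.

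\textbf{Axiom (Q1).} The $G$-action on $\A_I$ is the left regular action on the $\k[G]$ factor. The corresponding coaction is $\Delta\otimes\mathrm{id}$ restricted to $\A_I$. The coproduct $\Delta$ is graded for the conjugation-by-$\lambda_e$ grading, and I would justify this by equivariance: $\lambda_e(t)gh\lambda_e(t)^{-1}=(\lambda_e(t)g\lambda_e(t)^{-1})(\lambda_e(t)h\lambda_e(t)^{-1})$. Hence the coaction is filtered. Its associated graded map is the coaction on $\Ind_{G^{F(\chi)}}^G(\k_\chi[\g^*]/P_\chi)$, and under $\iota_I$ this becomes the coaction on $\k[\O_\chi]$, because the isomorphism of Corollary~\ref{C:coordasinduced} is $G$-equivariant.

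\textbf{Axiom (Q2).} Let $\Phi_I$ be the Frobenius reciprocity map $u\mapsto (g\mapsto g^{-1}\cdot u \bmod I)$. This map is $G$-equivariant and is an algebra map. It is filtered from $U(\g)^{\lambda_e}$ because the $G$-action map $G\times\g\to\g$ is $\k^\times$-equivariant for the conjugation action on $G$ and $\gamma_e$ on $\g$, exactly as in the proof of Theorem~\ref{T:Katsylo}. This is the step I expect to require the most care. One must confirm $\gr\Phi_I=\iota_I\circ\mu^*$, and not merely that the composite has the right image; this is the second assertion of Step 4 of Theorem~\ref{T:grAI}. I would establish it by observing that $\gr\Phi_I$ is again the Frobenius reciprocity map attached to $\k[\g^*]\to\k_\chi[\g^*]/P_\chi$. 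Lemma~\ref{L:inducedsimplePoissonquotient} identifies this map with the projection $\k[\O_\chi]\onto\k[\O_\chi]/\mu^*(J_\chi)$, precomposed with $\mu^*$. The proof of Corollary~\ref{C:coordasinduced} shows that, under reciprocity, this projection corresponds to the identification $\k[\O_\chi]\cong\Ind(\cdots)$. Hence $\gr\Phi_I=\iota_I\circ\mu^*$, and $(\A_I,\iota_I,\Phi_I)\in\Quant^G(\k[\O_\chi]^{\lambda_e})$.
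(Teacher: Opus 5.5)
Your proposal is correct and follows essentially the same route as the paper. The paper simply asserts that $\iota_I$ is a Kazhdan graded Poisson isomorphism by the results of Subsection~\ref{ss:coordringdesc}, and that (Q1) and (Q2) follow from the proof of Theorem~\ref{T:grAI}; you spell these checks out in more detail (almost commutativity inside $\k[G]\otimes U(\g)/I$, gradedness of $\Delta$ for (Q1), and the Frobenius reciprocity identification $\gr\Phi_I=\iota_I\circ\mu^*$ behind Step 4), but with the same ingredients.
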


Varying $\eta$ over $\chi+X$, this yields a map $$\Prim_{\chi+X}^{p^{d_\chi}} U(\g)\to \Quant^G(\k[\O_\chi]^{\lambda_e}) .$$ Recall from Lemma~\ref{L:twistquant} the bijection $\Quant^G(\k[\O_\chi]^{\lambda_e})\isoto \Quant^G(\k[\O_\chi])$, and from Proposition~\ref{P:quantizationsvsideals} the bijection $$\Quant^G(\k[\O_\chi])\to \Idl_{\o{\O}_\chi}^G U(\g), \qquad (\cA,\iota,\Phi)\mapsto \ker(\Phi).$$ Composing these maps yields $$\Omega:\Prim_{\chi+X}^{p^{d_\chi}} U(\g)\to \Idl_{\o{\O}_\chi}^G U(\g).$$

Finally, we note the following for future reference.
\begin{Lemma}\label{L:suppAI}
    Let $I\in \Prim_{\chi+X}^{p^{d_\chi}} U(\g)$ and $\eta\in\chi+X$. If $I_\eta\subseteq I$, then $\Supp_p(\A_I)=\o{\O}_\eta^{(1)}$. 
\end{Lemma}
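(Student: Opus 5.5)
The claim is that the $p$-support of $\A_I=\Ind_{G^{F(\eta)}}^G(U(\g)/I)$ is the closure of $\O_\eta^{(1)}$. The plan is to compute the image of $Z_p(\g)$ in $\A_I$ directly: determine the kernel $\Ann_{Z_p(\g)}\A_I=\ker\Phi_I\cap Z_p(\g)$, and show its zero locus is $\o\O_\eta^{(1)}$.

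\textbf{Step 1: $Z_p(\g)$ acts on $\A_I$ through functions on $\O_\eta^{(1)}$.} Since $Z_p(\g)$ is central and $I\supseteq I_\eta$, it acts on $U(\g)/I$ through the character $\operatorname{ev}_{F(\eta)}$. By Frobenius reciprocity, $\Phi_I(z)$ for $z\in Z_p(\g)=\k[(\g^*)^{(1)}]$ is the element $g\mapsto g^{-1}\cdot z \bmod I$ of $\k[G]\otimes U(\g)/I$. This equals $z(g\cdot F(\eta))\otimes 1$, i.e.\ the pullback of $z$ along the orbit map
\[
G\to G/G^{F(\eta)}\cong\O_\eta^{(1)}\hookrightarrow(\g^*)^{(1)},
\]
tensored with $1$. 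Here I use the isomorphism $G/G^{F(\chi)}\isoto\O_\chi^{(1)}$ of Subsection~\ref{ss:twistedcoadjoint}, applied to $\eta$. Hence $\Phi_I|_{Z_p(\g)}$ factors as the restriction $\k[(\g^*)^{(1)}]\to\k[\O_\eta^{(1)}]$ followed by the injection
\[
\k[\O_\eta^{(1)}]=\k[G]^{G^{F(\eta)}}\hookrightarrow\A_I,\qquad f\mapsto f\otimes 1.
\]
This map is injective because $\k[G]\to\k[G]\otimes U(\g)/I$, $f\mapsto f\otimes1$, is injective.

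\textbf{Step 2: conclude.} Step 1 gives that $\Ann_{Z_p(\g)}\A_I$ is the vanishing ideal of $\O_\eta^{(1)}$ in $(\g^*)^{(1)}$, which is the ideal of $\o\O_\eta^{(1)}$. Therefore $\Supp_p(\A_I)=\bbV(\Ann_{Z_p(\g)}\A_I)=\o\O_\eta^{(1)}$.

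The main obstacle is the Frobenius-reciprocity bookkeeping in Step 1. One has to confirm that the comoment $\Phi_I$ sends $z$ to the function $g\mapsto \Ad(g^{-1})z \bmod I$, and that $G$ acts on $Z_p(\g)\cong\k[(\g^*)^{(1)}]$ through the Frobenius-twisted coadjoint action, so that evaluation becomes $z(g F(\eta))$. Both points follow from the $G$-equivariance of $\xi$ recorded in Subsection~\ref{ss:restrictedLiealgebras}. A fallback is Proposition~\ref{P:psupport}, which requires $\ker\Phi_I\in\Idl^G_{\o\O_\chi}U(\g)$. With that, $\Supp_p$ is some $\o\O'^{(1)}$ with $\O'\subseteq\bbS_\chi$, and Step 1's observation that $F(\eta)$ lies in it (evaluate at $g=1$) would force $\O'=\O_\eta$, using $\dim\O_\eta=d_\chi$ from Lemma~\ref{L:dogwobble}.
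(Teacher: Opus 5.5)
Your proof is correct, but it takes a different route from the paper's. The paper only checks that $F(\eta)$ lies in the support. Composing $\Phi_I$ with $\varepsilon\otimes 1$ recovers the projection $U(\g)\onto U(\g)/I$, whose kernel contains $\Omega(I)+I_\eta U(\g)$, so this sum is proper. $G$-stability then gives $\o\O_\eta^{(1)}\subseteq\Supp_p(\A_I)$. For equality the paper relies, implicitly, on Proposition~\ref{P:psupport} applied to $\Omega(I)\in\Idl^G_{\o\O_\chi}U(\g)$: the support is the closure of a single orbit of dimension $d_\chi=\dim\O_\eta$. That is exactly your fallback, and your ``evaluate at $g=1$'' is the paper's counit step.

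Your main route instead identifies $\Phi_I|_{Z_p(\g)}$ with the comorphism of the orbit map $g\mapsto g\cdot F(\eta)$. This computes $\Ann_{Z_p(\g)}\A_I=\Omega(I)\cap Z_p(\g)$ exactly, as the vanishing ideal of $\o\O_\eta^{(1)}$, so both inclusions come at once. What this buys is independence from the heavier inputs. You need neither Theorem~\ref{T:grAI} (to know $\Omega(I)\in\Idl^G_{\o\O_\chi}U(\g)$) nor Proposition~\ref{P:psupport} with its asymptotic-cone and sheet arguments. You also do not use $\eta\in\chi+X$ or the codimension $p^{d_\chi}$: the computation works for any proper $G^{F(\eta)}$-stable ideal containing $I_\eta$. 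The paper's version is shorter only because those results are already in place.

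One point to state explicitly: to conclude $\Phi_I(z)=\tilde z\otimes 1$ from its values at $\k$-points, you use that $G$ is reduced. Alternatively, phrase Step~1 directly as the coaction $Z_p(\g)\to\k[G]\otimes Z_p(\g)$ followed by $\Id\otimes\operatorname{ev}_{F(\eta)}$, with the antipode accounting for $g^{-1}$.
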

\begin{proof} Using the fact that $\Omega(I)$ is $G$-stable, along with Lemma~\ref{L:whatissupport}, it suffices to show $\Omega(I)+I_\eta\neq U(\g)$. Denote by $\varepsilon$ the counit of $\k[G]$ and consider the composition $$U(\g)\to (\k[G]\otimes U(\g)/I)^{G^{F(\eta)}}\xrightarrow{\varepsilon\otimes 1} U(\g)/I,$$ which coincides with the natural surjection $U(\g)\twoheadrightarrow U(\g)/I$ by the axioms of a $\k[G]$-comodule. Clearly $\Omega(I)$ and $I_\eta$ are in the kernel of this non-zero map, which proves the result.
\end{proof}

\subsection{Classification of quantizations}\label{ss:classquants}

Recall that $G=\GL_N$, let $\blambda\vdash N$ be the partition corresponding to $\chi$, and let $\g_0$ be the Levi subalgebra of $\g$ introduced in Subsection~\ref{ss:subalgebrasassociated}.

Set $\z^*=\kappa\z(\g_0)$, let $W(\g_0)$ be the relative Weyl group of $\g_0$, and recall from Subsection~\ref{ss:subalgebrasassociated} that $W(\g_0)$ acts on $\z^*$ via the dot-action. Throughout this subsection, $\k[\O_\chi]$ is equipped with the PBW grading.

Thanks to \eqref{eq:1dimsandprimitives2} and Theorem~\ref{T:OnedimensionalsviaMiura}, we have natural bijections
$$\z^*/W(\g_0)_\bullet \overset{1\text{-}1}{\to} \E(\g,e) \overset{1\text{-}1}{\to} \Prim_{\chi+X}^{p^{d_\chi}} U(\g).$$

We claim that the set of isomorphism classes of quantizations, $\Quant^G \k[\cO_\chi]$, is naturally in bijection with $\Prim_{\chi + X}^{{p^{d_\chi}}} U(\g)$, and this claim will complete the proof of the first part of our main result (Section~\ref{ss:statementofresults}). 

In Proposition~\ref{P:quantizationsvsideals} we showed that taking the kernel of a quantum comoment map gives a bijection $\Quant^G \k[\o\O_\chi] \overset{1\text{-}1}{\to} \Idl_{\cO}^G U(\g)$, and so it remains to check that there is a bijection
\begin{eqnarray}
\label{eq:lastbigectiontodo}
    \Idl_{\cO_\chi}^G U(\g) \overset{1\text{-}1}{\to} \Prim_{\chi+X}^{p^{d_\chi}} U(\g).
\end{eqnarray}

Recall the constructions of the maps $$\Theta:\Idl_{\o{\O}_\chi}^{G} U(\g)\to \Prim_{\chi+X}^{p^{d_\chi}} U(\g) \qquad\mbox{and}\qquad \Omega:\Prim_{\chi+X}^{p^{d_\chi}} U(\g)\to \Idl_{\o{\O}_\chi}^{G} U(\g),$$ from Subsections~\ref{ss:quantisationtoprimitiveideals} and \ref{ss:primidlstoquants}, respectively. We prove that they are mutually inverse.

\begin{Proposition}\label{P:idltoidl}
    $\Omega\circ\Theta=\Id$.
\end{Proposition}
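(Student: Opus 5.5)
Fix $I\in\Idl_{\o\O_\chi}^G U(\g)$, write $\A=U(\g)/I$ with $\Phi:U(\g)\onto\A$, let $\eta\in\chi+X$ be the point with $\Supp_p(\A)=\o\O_\eta^{(1)}$ provided by Proposition~\ref{P:psupport} and Theorem~\ref{T:Katsylo}, and set $J:=\Theta(I)=I+I_\eta U(\g)$. We must show $\Omega(J)=I$, where $\Omega(J)=\ker\Phi_J$ and $\Phi_J:U(\g)\to\A_J=\Ind_{G^{F(\eta)}}^G(U(\g)/J)$ comes from Frobenius reciprocity. The plan is to show first that $I\subseteq\Omega(J)$, and then to get equality by comparing associated graded ideals.

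\textbf{Step 1: $I\subseteq\Omega(J)$.} By Frobenius reciprocity, $\Phi_J$ corresponds to the $G^{F(\eta)}$-equivariant projection $\pi:U(\g)\onto U(\g)/J$. Concretely, $\Phi_J(u)$ is the function $g\mapsto\pi(g^{-1}\cdot u)$, i.e.\ the image of $u$ under the coaction $U(\g)\to\k[G]\otimes U(\g)$ followed by $\Id\otimes\pi$. Since $I$ is $G$-stable, the coaction sends $I$ into $\k[G]\otimes I$. Because $I\subseteq J$, this lands in $\k[G]\otimes J$, so $(\Id\otimes\pi)$ kills it and $\Phi_J(I)=0$. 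Hence $\Phi_J$ factors through a $G$-equivariant surjection $\A\onto U(\g)/\Omega(J)$. (Surjectivity of $\Phi_J$ holds by the argument preceding Proposition~\ref{P:quantizationsvsideals}, since $(\A_J,\iota_J,\Phi_J)$ is a Hamiltonian quantization by Corollary~\ref{C:AIQuant} and Lemma~\ref{L:twistquant}.)

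\textbf{Step 2: comparison of associated gradeds.} Both $I$ and $\Omega(J)$ lie in $\Idl_{\o\O_\chi}^G U(\g)$. For $I$ this is by hypothesis; for $\Omega(J)$ it follows from Corollary~\ref{C:AIQuant} together with Proposition~\ref{P:quantizationsvsideals}, after passing between the Kazhdan and PBW filtrations by Lemma~\ref{L:twistquant}, which does not change the ideal $\ker\Phi_J$. So, with respect to the PBW filtration, $\gr I=\gr\Omega(J)$ is the defining ideal of $\o\O_\chi$. If $I\subseteq I'$ are ideals with $\gr I=\gr I'$, then $I=I'$: otherwise take $x\in I'\setminus I$ of minimal degree $d$ (the PBW filtration is discrete and bounded below). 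Its symbol lies in $\gr I'=\gr I$, so there is $y\in I$ of degree $d$ with $x-y$ of lower degree and $x-y\in I'\setminus I$, contradicting minimality. Applying this with $I'=\Omega(J)$ gives $\Omega(J)=I$.

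\textbf{Main obstacle.} The only delicate point is in Step 1: checking that the map $\Phi_J$ produced by Frobenius reciprocity really is given by the coaction formula, so that $G$-stability of $I$ forces $\Phi_J(I)=0$. This uses that the $G^{F(\eta)}$-action on $U(\g)/J$ (Lemma~\ref{L:actiondescends}) is the one induced from $U(\g)$, and that $J$ is stable under $G^{F(\eta)}$. The latter holds because $J$ is primitive with $p$-character $\eta$, as in the proof of Lemma~\ref{L:actiondescends}. Once this is in place, the graded comparison in Step 2 is routine.
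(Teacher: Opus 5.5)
Your proposal is correct and follows essentially the same route as the paper. There, too, $G$-stability of $I$ together with $I\subseteq\Theta(I)$ puts $I$ in the kernel of $\Phi_{\Theta(I)}$, so $I\subseteq\Omega\Theta(I)$, and equality is then forced by comparing associated gradeds. The paper phrases this last step as the surjection $U(\g)/I\onto U(\g)/\Omega\Theta(I)$ having bijective associated graded map; your minimal-degree argument with $\gr I=\gr\Omega(J)$ just spells that out.
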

\begin{proof}
    Let $I\in \Idl_{\o{\O}_\chi}^{G} U(\g)$, and suppose that $\Theta(I)\supseteq I_\eta$ for $\eta\in \chi+X$. It is clear from the construction that $I\subseteq \Theta(I)$. Furthermore, since $I$ is $G$-stable we have $I\to \k[G]\otimes\Theta(I)$ under the $\k[G]$-comodule map, and thus that $I$ is in the kernel of $$U(\g)\to \Ind_{G^{F(\eta)}}^{G}(U(\g)/\Theta(I))=\cA_{\Theta(I)}.$$ By construction, this means $I\subseteq \Omega\Theta(I)$ and thus that there exists a filtered algebra surjection $$U(\g)/I\twoheadrightarrow U(\g)/\Omega\Theta(I).$$ The associated graded morphism is an isomorphism $\k[\O_\chi]\isoto\k[\O_\chi]$, which implies that $U(\g)/I\twoheadrightarrow U(\g)/\Omega\Theta(I)$ must in fact be bijective. Hence $\Omega\Theta(I)=I$ as claimed.
\end{proof}

\begin{Proposition}\label{P:primtoprim}
    $\Theta\circ\Omega=\Id$.
\end{Proposition}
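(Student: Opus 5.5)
Let $I\in\Prim_{\chi+X}^{p^{d_\chi}}U(\g)$ and let $\eta\in\chi+X$ be the unique point with $I_\eta\subseteq I$, i.e. $I\cap Z_p(\g)=I_\eta$. Write $J:=\Omega(I)=\ker\Phi_I$. The plan is to show first that the point of the Katsylo section attached to $J$ in the construction of $\Theta$ is exactly $\eta$, and then that $\Theta(J)=J+I_\eta U(\g)$ coincides with $I$. For the first step we use Lemma~\ref{L:suppAI}, which gives $\Supp_p(\A_I)=\o\O_\eta^{(1)}$. Since $U(\g)/J\hookrightarrow \A_I$ is a $Z_p(\g)$-submodule, and since $U(\g)/J$ is itself a quantization whose $p$-support is an orbit closure $\o\O^{(1)}$ by Proposition~\ref{P:psupport}, we have $\o\O^{(1)}\subseteq\o\O_\eta^{(1)}$. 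Both orbits have dimension $d_\chi$: $\O$ lies in $\bbS_\chi$ by Proposition~\ref{P:psupport}, and $\eta\in\chi+X$ by Lemma~\ref{L:dogwobble}(1). Hence $\O=\O_\eta$.

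It is cleaner, though, to argue directly from annihilators. We have $\Ann_{Z_p(\g)}(U(\g)/J)=J\cap Z_p(\g)$. Because $\Phi_I$ followed by $\varepsilon\otimes1$ is the projection to $U(\g)/I$, as in the proof of Lemma~\ref{L:suppAI}, we get $J\subseteq I$. So $J\cap Z_p(\g)\subseteq I_\eta$, and $\eta$ lies in the $p$-support of $U(\g)/J$. That support is $\o\O^{(1)}$ with $\O\subseteq\bbS_\chi$. Since $\O_\eta$ is an orbit in this support of the maximal dimension $d_\chi$, the irreducibility of $\o\O$ forces $\O_\eta=\O$. Theorem~\ref{T:Katsylo} then says $\O\cap\S_\chi=\{\eta\}$, so the point chosen in the definition of $\Theta(J)$ is $\eta$.

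For the second step, we have $J\subseteq I$ and $I_\eta U(\g)\subseteq I$, hence $\Theta(J)=J+I_\eta U(\g)\subseteq I$. By Lemma~\ref{L:IdltoPrim}, $\Theta(J)$ is a maximal ideal of codimension $p^{d_\chi}$, and $I$ has the same codimension. Therefore $\Theta(J)=I$, which proves $\Theta\circ\Omega=\Id$.

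The main obstacle is the step identifying $\O$ with $\O_\eta$. In particular, we must check that $\eta\in\Supp_p(U(\g)/J)$ really follows from $J\subseteq I$ together with Lemma~\ref{L:whatissupport}: we need $I_\eta(U(\g)/J)\neq U(\g)/J$, which holds because $J+I_\eta U(\g)\subseteq I\neq U(\g)$. We also need the dimension comparison inside the irreducible closure $\o\O$. The remaining ingredients are already available from Lemma~\ref{L:suppAI}, Proposition~\ref{P:psupport} and Theorem~\ref{T:Katsylo}.
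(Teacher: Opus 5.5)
Your proof is correct. It shares the paper's skeleton: the counit argument gives $\Omega(I)\subseteq I$, hence $\Omega(I)+I_\eta U(\g)\subseteq I$, and then codimensions are compared. The difference lies in how you obtain the key codimension.

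The paper proves a separate claim that $\dim \A/\Phi(I_\eta)\A=p^{d_\chi}$ for $\A=U(\g)/\Omega(I)$. It passes to the Kazhdan filtration and uses $\gr I_\eta=J_\chi$ (Lemma~\ref{L:grUchi}) to get a Poisson surjection from the simple Poisson algebra $\k_\chi[\g^*]/P_\chi$ onto $\gr(\A/\Phi(I_\eta)\A)$, with non-vanishing supplied by Lemma~\ref{L:suppAI}. It then sandwiches $U(\g)/(\Omega(I)+I_\eta U(\g))$ between $U(\g)/\Theta\Omega(I)$ and $U(\g)/I$ and invokes Premet's theorem.

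You instead show explicitly that the point of $\chi+X$ used to define $\Theta(\Omega(I))$ is $\eta$, using Proposition~\ref{P:psupport}, irreducibility of $\o\O$ and Theorem~\ref{T:Katsylo}. The paper only alludes to this step (``by the arguments of Subsection~\ref{ss:quantisationtoprimitiveideals}''). Once it is in place, the maximality and codimension of $\Theta(\Omega(I))$ follow directly from Lemma~\ref{L:IdltoPrim}, so the paper's separate claim becomes unnecessary.

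The paper's route gives a direct computation of the fibre at the specific point $\eta$, via the degeneration to the reduced symmetric algebra. That computation is independent of the generic-rank argument in Proposition~\ref{P:fibredimension}(ii). Your route is shorter and reuses exactly what was already proved for the well-definedness of $\Theta$.
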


\begin{proof}
    Let $I\in\Prim_{\chi+X}^{p^{d_\chi}} U(\g)$ with $I_\eta\subseteq I$. Set $\cA=U(\g)/\Omega(I)$ and let $\Phi:U(\g)\twoheadrightarrow \cA$ be the natural map. We first prove the following claim.
    
    {\bf Claim:} $\dim(\cA/\Phi(I_\eta)\cA)={p^{d_\chi}}$.

    {\em Proof of Claim:} Equip $U(\g)$ (and all subspaces and quotients thereof) with the Kazhdan filtration. By Lemma~\ref{L:grUchi}, $\gr(I_\eta)=J_\chi$ and thus we have inclusions $\mu^*(J_\chi)\k[\O_\chi]\subseteq \gr(\Phi(I_\eta))\gr(\cA)\subseteq \gr(\Phi(I_\eta)\cA)$ and hence a surjective Poisson homomorphism $$\k_\chi[\g^*]/P_\chi\isoto \k[\O_\chi]/\mu^*(J_\chi)\k[\O_\chi]\twoheadrightarrow \gr(\cA/\Phi(I_\eta)\cA),$$ where the first isomorphism follows from Lemma~\ref{L:inducedsimplePoissonquotient}. Since $\cA/\Phi(I_\eta)\cA\neq 0$ by Lemma~\ref{L:suppAI} and $\k_\chi[\g^*]/P_\chi$ is Poisson simple of dimension ${p^{d_\chi}}$ by Lemma~\ref{L:PremetSkryabin}, the claim follows.

    Note now that $\Omega(I)\subseteq I$ by the axioms of the adjoint $\k[G]$-comodule structure on $U(\g)$, and thus $\Omega(I)+I_\eta\subseteq I$. Furthermore, $\Omega(I)\subseteq \Theta\Omega(I)$ and $I_\eta \subset \Theta\Omega(I)$ by the arguments of Subsection~\ref{ss:quantisationtoprimitiveideals}, so similarly $\Omega(I)+I_\eta\subseteq \Theta(\Omega(I))$. There are thus surjections $$U(\g)/\Theta\Omega(I) \twoheadleftarrow U(\g)/(\Omega(I)+ I_\eta) \twoheadrightarrow U(\g)/I.$$ By \eqref{eq:Premetstheorem}, the above claim implies $\Theta\Omega(I)=\Omega(I)+ I_\eta=I$.
\end{proof}

We have now given a bijection \eqref{eq:lastbigectiontodo}, and this proves the following.
\begin{Theorem}\label{T:MainThm}
    We have natural bijections
    \begin{eqnarray}
        \label{eq:allthebijections}
         \z^*/W(\g_0)_\bullet \overset{1\text{-}1}{\to}\E(\g,e)\overset{1\text{-}1}{\to} \Prim_{\chi+X}^{{p^{d_\chi}}} U(\g)\overset{1\text{-}1}{\to} \Idl_{\o\O_\chi}^G U(\g)\overset{1\text{-}1}{\to} \Quant^G \k[\O_\chi].
    \end{eqnarray}
\end{Theorem}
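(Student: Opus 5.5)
The plan is to assemble the chain \eqref{eq:allthebijections} from results already established, checking that each arrow is a bijection and that the composite is natural.

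For the first arrow I will take the isomorphism of Theorem~\ref{T:OnedimensionalsviaMiura}(2), restricted to closed points. The second arrow is the bijection \eqref{eq:1dimsandprimitives2}. That bijection came from Lemma~\ref{L:Walgebraproperties}(3), which matches one-dimensional $U(\g,e)$-modules with small modules having $p$-character in $\S_\chi$. Lemma~\ref{L:dogwobble}(2) then shows that these small modules automatically have $p$-character in the Katsylo section $\chi+X$. Finally, Lemma~\ref{L:simplesprimitivesmaximals} passes from simple modules to primitive ideals.

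The third arrow is $\Omega$, or equivalently the inverse of $\Theta$. Both maps are well defined: $\Theta$ by Lemma~\ref{L:IdltoPrim}, and $\Omega$ by Corollary~\ref{C:AIQuant} combined with Lemma~\ref{L:twistquant} and Proposition~\ref{P:quantizationsvsideals}. Propositions~\ref{P:idltoidl} and \ref{P:primtoprim} show that $\Omega\circ\Theta=\Id$ and $\Theta\circ\Omega=\Id$, so $\Omega$ is a bijection. The fourth arrow is the inverse of the bijection $(\A,\iota,\Phi)\mapsto\ker\Phi$ from Proposition~\ref{P:quantizationsvsideals}, applied with $X=\o\O_\chi$ and trivial cocharacter.

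The remaining points are the hypotheses of Proposition~\ref{P:quantizationsvsideals}. First, $\o\O_\chi$ must be a closed $G\times\k^\times$-stable subscheme of $\g^*$; this holds by Lemma~\ref{L:gradorbs}, since $\chi$ is nilpotent. Second, we need $\k[\O_\chi]=\k[\o\O_\chi]$, which is Lemma~\ref{L:normalorbits}. Together these let us write the last set as $\Quant^G\k[\O_\chi]$.

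The main obstacle is the step through $\Omega$. There the quantization $\A_I$ is built using the Kazhdan grading, so one must pass to the PBW grading via the twist of Lemma~\ref{L:twistquant}. I will check that after this twist, $\ker\Phi_I$ is still a $G$-stable ideal whose associated graded for the PBW filtration is the defining ideal of $\o\O_\chi$. This follows because twisting by $\lambda_e$ leaves the underlying ideal unchanged and commutes with $\gr$ by Lemma~\ref{L:twistingfiltrations}. With that settled, the composite of the four bijections is natural, which proves the theorem.
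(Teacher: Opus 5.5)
Your proposal is correct and matches the paper's argument. The theorem is obtained by chaining Theorem~\ref{T:OnedimensionalsviaMiura}, the bijection \eqref{eq:1dimsandprimitives2}, the mutually inverse maps $\Theta$ and $\Omega$ (Propositions~\ref{P:idltoidl} and~\ref{P:primtoprim}), and Proposition~\ref{P:quantizationsvsideals}; your side checks (nilpotency giving $\k^\times$-stability, normality giving $\k[\O_\chi]=\k[\o\O_\chi]$, and the Kazhdan-to-PBW twist in defining $\Omega$) are exactly the points the paper handles along the way.
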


We now turn to part (2) of the Theorem from the Introduction. Recall that $U(\g)^G$ is a central subalgebra of $U(\g)$ which is isomorphic to $\k[\t^*]^{W_\bullet}$. Given $\lambda\in\t^*/W_\bullet$, denote by $I^\lambda$ the corresponding maximal ideal of $U(\g)^G$.

\begin{Theorem}
\label{C:quantshavecharacters}
    Let $I\in\Idl_{\cO_\chi}^G U(\g)$. There exists $\lambda\in\t^*/W_\bullet$ such that $I^\lambda\subseteq I$, so $U(\g) / I$ admits a central character. Furthermore, \eqref{eq:commdiagCC} commutes. 
\end{Theorem}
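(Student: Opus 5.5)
First I would show that $U(\g)^G$ acts on $\A = U(\g)/I$ through a character. The image of $U(\g)^G$ in $\A$ lies in the centre $Z\A$. By Lemma~\ref{L:casimirspthpowers}, $Z\A = Z_p\A$. Since $\A$ is a domain, $Z_p\A \cong Z_p(\g)/I_p$ is a domain whose spectrum is $\o\O^{(1)}$ (Proposition~\ref{P:psupport}).

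The elements of $U(\g)^G$ are $G$-invariant, so their images lie in $(Z_p\A)^G = (Z_p(\g)/I_p)^G$. This is contained in the $G$-invariant functions on the orbit closure $\o\O^{(1)}$, and such functions are constant because $G$ has a dense orbit. More precisely, take $z \in U(\g)^G$ with image $\bar z \in Z_p\A$. Since $G$ is connected and acts with a dense orbit on the irreducible variety $\o\O^{(1)}$, the $G$-invariant regular function $\bar z$ is constant on $\O^{(1)}$, hence on its closure. Because $Z_p\A = \k[\o\O^{(1)}]$ is reduced, $\bar z$ is a scalar. So $U(\g)^G \to \A$ factors through a character, i.e. $I^\lambda \subseteq I$ for a unique $\lambda \in \t^*/W_\bullet$. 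One point needs care: a priori $\Phi(Z_p(\g))$ is $Z_p(\g)/I_p$ with $I_p$ prime, so it is the reduced coordinate ring and the argument is fine.

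Next, the commutativity of \eqref{eq:commdiagCC}. Start from $\zeta \in \z^*$ and trace it through the bijections of Theorem~\ref{T:MainThm}. It corresponds to $I_\zeta := \Omega(J)$, where $J \in \Prim_{\chi+X}^{p^{d_\chi}}U(\g)$ is the annihilator of the small module attached to $\tilde\mu(\zeta) \in \E(\g,e)$.

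By Proposition~\ref{P:primtoprim}, $J = \Theta\Omega(J) \supseteq \Omega(J) = I_\zeta$. So the central character of $U(\g)/I_\zeta$ equals the central character of the simple module $L = U(\g)/J$. It therefore suffices to compute the $U(\g)^G$-character of the small module attached to the $1$-dimensional $U(\g,e)$-module $\tilde\mu(\zeta)$.

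Under the Premet--Skryabin equivalence, $Z(\g) \to U(\g,e)$ is compatible: the centre of $U(\g)$ maps into the centre of $U(\g,e)$, and $L$ and the corresponding $U(\g,e)$-module have the same central character. The remaining task is the compatibility of the Miura map with Harish-Chandra homomorphisms. The composite $U(\g)^G \to Z(U(\g,e)) \to U(\g_0)$ should agree with the Harish-Chandra map $U(\g)^G \to U(\t)^{W_\bullet}$ (shifted by the $\rho$ of Subsection~\ref{ss:subalgebrasassociated}) followed by the inclusion. For $\gl_N$ this is essentially in \cite{GT}: the $1$-dimensional module with weight $\lambda_\bA - \rho$ has central character given by the $W$-orbit of $\lambda_\bA - \rho$ under the dot action, since generators of the centre are evaluated by symmetric polynomials in all entries of $\bA$ (cf. the proof of Theorem~\ref{T:OnedimensionalsviaMiura}).

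With that compatibility, the image of $\zeta$ in $\t^*/W_\bullet$ is the class of $\zeta$ under $\z^* \subseteq \t^*$. This is the left diagonal arrow, and the diagram commutes.

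I expect the main obstacle to be this last compatibility: identifying the restriction of the Miura map to $U(\g)^G$ with the $\rho$-shifted Harish-Chandra homomorphism for the specific Borel $\b$ chosen in Subsection~\ref{ss:subalgebrasassociated}. I would first try to extract it from the explicit central elements (Capelli-type or quantum determinant generators) used in \cite{GT}. Failing that, I would verify it on Verma-type modules induced from $\p$, where the central character can be read off from a highest weight vector for $\b$.
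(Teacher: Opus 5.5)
\textbf{First assertion.} Your proof is correct, but it argues differently from the paper. The paper uses the classification: it writes $I=\Omega(J)$ with $J$ primitive and observes that the $G$-stable ideal $I^\lambda\subseteq J$ lies in $\Omega(J)=\ker\big(U(\g)\to\k[G]\otimes U(\g)/J\big)$. You instead combine $Z\A=Z_p\A$ with the fact that $\Spec Z_p\A=\o\O^{(1)}$ has a dense orbit, so $G$-invariant central elements are scalars. Your route does not need the bijection at all. You could even skip the $p$-support: the symbol of $\Phi(z)$, $z\in U(\g)^G$, lies in $\k[\cO_\chi]^G=\k$, so $\Phi(z)\in\A_0=\k$. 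The paper's version has the advantage of identifying the character directly with that of $J$. You recover this anyway from $\Omega(J)\subseteq J$.

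\textbf{The gap: commutativity of \eqref{eq:commdiagCC}.} Your reduction to computing the $U(\g)^G$-character of $\tilde\mu(\zeta)$ is right. However, the decisive identity is only stated as an expectation with two strategies: that $u\mapsto\zeta(\mu(u))$ is the Harish-Chandra character at the class of $\zeta$. This identity is exactly the content of this part of the theorem. As written it is also slightly off: $\mu(U(\g)^G)$ lies in $Z(U(\g_0))$, not in $U(\t)$, so the comparison only makes sense after composing with $U(\g_0)\to U(\g_0/[\g_0,\g_0])$.

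The missing idea is the short argument the paper gives.
\begin{itemize}
\item Take a torus $T_0$ with $\g^{T_0}=\g_0$; it contains the cocharacter defining the grading $\g=\bigoplus_d\g(d)$. Then $U(\g)^G\subseteq U(\g)^{T_0}=U(\g_0)\oplus K$ with $K=(\r U(\g)+U(\g)\r^-)^{T_0}$.
\item In a PBW basis ordered $U(\r)U(\g_0)U(\r^-)$, every degree-zero monomial in $K$ has a nontrivial $\r$-factor on the left.
\item Hence under $U(\g)=U(\p)\oplus U(\g)\r^-_\chi$, such a monomial is sent into $\r U(\p)=U(\p)\r$, which dies in $U(\g_0)$.
\item So $\mu(u)=u_0$ is the parabolic Harish-Chandra projection, and abelianising gives the Harish-Chandra homomorphism evaluated on $\z^*$.
\end{itemize}
You also only assert the upper-right triangle. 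The paper proves it with the one-line computation $u(q\otimes v)=qu\otimes v=q\otimes uv$ in $Q_\chi\otimes_{U(\g,e)}V$.

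\textbf{Your fallback would compute the wrong projection.} A module induced from $\p$ is generated by a vector killed by $\r$. It therefore sees the component of $u\in U(\g)^{T_0}$ along $\r^-U(\g)+U(\g)\r$. The Miura map instead quotients by the left ideal generated by $\r^-_\chi$, so it sees the component along $\r U(\g)+U(\g)\r^-$. These differ in general by a $\rho$-type shift. For example, for the Casimir of $\gl_2$ with $p$ odd, at weight $(a,b)$ one gets $a^2+b^2+a-b$ versus $a^2+b^2-a+b$. The correct test modules are those generated by a vector annihilated by $\r^-$.
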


\begin{proof}
    By Theorem~\ref{T:MainThm}, $I=\Omega(J)$ for some unique $J\in\Prim_{\chi+X}^{p^{d_\chi}} U(\g)$. Since $J$ is primitive, there exists unique $\lambda\in \t^*/W_\bullet$ such that $I^\lambda\subseteq J$. The $G$-stability of $I^\lambda$ in $U(\g)$ then implies that it lies in the kernel of the map $U(\g)\to\k[G]\otimes U(\g)/J$ induced from the structure map for the adjoint $\k[G]$-comodule structure on $U(\g)$. This kernel is precisely $\Omega(J)=I$.

    In particular, by Proposition~\ref{P:quantizationsvsideals} there exists a well-defined map $\Quant^G\k[\O_\chi]\to\t^*/W_\bullet$. Combining this with \eqref{eq:allthebijections} and the natural map $\z^*/W(\g_0)_\bullet\to \t^*/W_\bullet$ yields \eqref{eq:commdiagCC}. What remains is to show that \eqref{eq:commdiagCC} commutes.
    
    To do so, it will be helpful to use an alternative description of the finite $W$-algebra $U(\g,e)$ when $\g=\gl_N$. Recall all the notation from Section~\ref{S:generallinearalgebra}, and set $\r^{-}=\sspan\{e_{i,j} \mid \col(i) < \col(j)\}$ and $\r^-_\chi=\{x-\chi(x)\mid x\in\r\}$. Define $Q_\chi:=U(\g)/U(\g)\r^-_\chi$; then $U(\g,e)\cong Q_\chi^{R^{-}}$ where $R^{-}$ is the unipotent algebraic group with Lie algebra $\r^-$. With this definition, $Q_\chi$ has a $(U(\g),U(\g,e))$-bimodule structure induced from the left and right multiplication in $U(\g)$. Furthermore,  the natural map $U(\g)^G\to Q_\chi$ has (central) image in $U(\g,e)$ and thus $U(\g)^G$ acts on each simple $U(\g,e)$-module via a character.
    
    At this point, what remains for the commutativity of \eqref{eq:commdiagCC} is to show that the following diagram commutes
\begin{eqnarray}
\label{eq:centchardiag1}
\begin{tikzcd}
\z^*\arrow[rr]   \arrow[d] & & \E(\g,e) \arrow[rr] \arrow[rrd]& &  \Prim_{\chi+X}^{p^{d_\chi}} U(\g)  \arrow[d]\\
\t^*/W_\bullet \arrow[rrrr, hook, two heads] & & &   & \g^*/\!/G
\end{tikzcd}
\end{eqnarray}
Note that the map $\E(\g,e)\to \Prim_{\chi+X}^{p^{d_\chi}} U(\g)$ sends $V$ to $\Ann_{U(\g)} (Q_\chi \otimes_{U(\g,e)}V)$. Given $u\in U(\g)^G$, $q\in Q_\chi$ and $v\in V$ we have $$u(q\otimes v)=uq\otimes v=qu\otimes v = q\otimes uv,$$ which suffices to show the commutativity of the upper-right triangle of \eqref{eq:centchardiag1}.

We now turn to the bottom left trapezium in \eqref{eq:centchardiag1}, which corresponds to the following (left-hand) diagram of commutative algebras, in which the bottom horizontal arrow is the Harish-Chandra homomorphism. The right-hand diagram is the abelianisation of the left, after composing $U(\g)^G\to U(\t)^{W_\bullet}\to U(\g_0/[\g_0,\g_0])$.
\begin{eqnarray}
\label{eq:centchardiag2}
\begin{tikzcd}U(\g_0/[\g_0,\g_0])   & &  U(\g,e)^{\ab} \arrow[ll]
& &  U(\g_0)   & &  U(\g,e) \arrow[ll]
\\
U(\t)^{W_\bullet} \arrow[u]  &   & U(\g)^G \arrow[u] \arrow[ll, "\sim", swap]
& &  & \arrow[ul] U(\g)^G \arrow[ur] &
\end{tikzcd}
\end{eqnarray}

In the right-hand diagram \eqref{eq:centchardiag2}, the arrow $U(\g)^G\to U(\g_0)$ coincides with a slight generalisation of the (non-twisted) Harish-Chandra homomorphism, which can be described as follows. Let $T_0 \subseteq T$ be a torus such that $\g^{T_0} = \g_0$. Then we have $U(\g)^{T_0} = U(\g_0) \oplus K$ where $K = (\r U(\g)+U(\g)\r^{-})^{T_0}$. Then $K$ is an ideal in $U(\g_0)^{T_0}$ giving a homomorphism $U(\g)^G \subseteq U(\g)^{T_0} \to U(\g_0)$.

We enhance the right-hand diagram \eqref{eq:centchardiag2} as follows. \begin{eqnarray}
\label{eq:centchardiag4}
\begin{tikzcd}
U(\g_0)   & & & &   U(\p) \arrow[llll, two heads] \\
& & U(\g)=U(\p)\oplus U(\g)\r_\chi^{-}  \arrow[urr, two heads]  & &  \\
U(\g)^G \arrow[urr, hook] \arrow[uu] \arrow[rrrr] &  & & & U(\g,e) \arrow[uu, hook] 
\end{tikzcd}
\end{eqnarray}
The bottom-right triangle commutes by construction, so the commutativity of \eqref{eq:centchardiag2} follows if we show the commutativity of the top-left triangle in \eqref{eq:centchardiag4}. If we write $u = u_0 + u_1 \in U(\g)^G \subseteq U(\g)^{T_0} = U(\g_0) \oplus K$ then the map $U(\g) \to U(\p)$ sends $u_1$ to an element of $U(\p) \r$, which maps to zero under $U(\p) \to U(\g_0)$. Therefore $u$ is sent to $u_0$ by either trajectory through the top-left triangle.

This completes the proof.
\end{proof}

\begin{Remark}
\label{R:whyGLN}
    Throughout this section we have assumed that $G=\GL_N$. This restriction is used for five reasons: 
    (1) to apply our version of Katsylo's Theorem (Theorem~\ref{T:Katsylo}), 
    (2) to apply Lemma~\ref{L:coneisGchi} and extend it to Frobenius neighbourhoods, 
    (3) to guarantee that coadjoint centralisers $G^\eta$ are connected, and thus deduce Lemma~\ref{L:actiondescends}, 
    (4) to ensure that the orbit closure $\cO_\chi$ is normal, and 
    (5) to apply the results of \cite{GT}.
    (A sixth reason, (6) to ensure that each nilpotent orbit lies in a unique sheet, is actually unnecessary; for example, Proposition~\ref{P:psupport} still holds if we replace $\bbS_\chi$ with the union of sheets containing $\O_\chi$.)

    In particular, the first two of these reasons are only relevant when considering quantizations whose $p$-support is different from $\cO_\chi^{(1)}$, and the fifth is only relevant for identifying $\Prim_{\chi+X}^{p^{d_\chi}} U(\g)$ with $\z^*/W(\g_0)_\bullet$. 
    Therefore, if we denote by $\Quant^G_{\chi} \k[\cO_\chi]$ the subset of Hamiltonian quantizations with $p$-support $\cO_\chi^{(1)}$ and by $\big(\Prim_\chi^{{p^{d_\chi}}} U(\g)\big)^{G^{F(\chi)}}$ the subset of $G^{F(\chi)}$-stable ideals in $\Prim_\chi^{{p^{d_\chi}}} U(\g)$, the arguments in this section suffice to show a bijection $$\big(\Prim_\chi^{{p^{d_\chi}}} U(\g)\big)^{G^{F(\chi)}} \simeq \Quant^G_{\chi} \k[\cO_\chi]$$ whenever $\cO_\chi$ is normal. This is spelled out in slightly more detail in the next section. 
\end{Remark}

\section{The Joseph ideal in positive characteristics}\label{s:Joseph}

Write $\g_\C = \Lie G_\C$ for a complex simple Lie algebra. Recall that the minimal nilpotent $G_\C$-orbit $\O_{\min}$ is the unique smallest non-zero orbit in $\Nc(\g_\C^*)$. For $G_\C$ classical, it can be easily characterised by the following partitions: $\lambda = (1^{N-2}, 2)$ when $G_\C = \SL_N$; $\lambda = (1^{N-4}, 2^2)$ for $G_\C = \SO_N$; $\lambda = (1^{N-2}, 2)$ for $N$ even and $G_\C = \Sp_N$. For $G_\C$ exceptional, $\O_{\min}$ is always the nilpotent orbit with Bala-Carter label $A_1$ (consisting of long roots when the root system is not simply laced).

Joseph proved \cite{Jo} that if the root system of $\g_\C$  does not have type {\sf A} then:
\begin{itemize}
\setlength{\itemsep}{4pt}
    \item[(i)] there is a unique completely prime ideal $J_0 \lhd U(\g_\C)$ such that the vanishing locus of $\gr J_0 \lhd S(\g) = \C[\g^*]$ is $\o\O_{\min}$.
    \item[(ii)] $J_0$ is a maximal ideal.
\end{itemize}

Our goal is to prove an analogue of Joseph's result over fields of positive characteristic. For the rest of the paper we assume $\k = \bar \k$ has characteristic $p>0$, and let $G$ be a simply connected simple algebraic group over $\k$. We assume that $p$ is very good for $G$.

As with the case over $\C$, there is a minimal nilpotent $G$-orbit $\O_{\min} \subseteq \Nc(\g^*)$, which is determined by the same partition (resp. Bala-Carter label) as above. The closure $\o\O_{\min}$ is always normal \cite[\textsection 8.6]{JaNO}.

Now we prove a general criterion to show that certain orbits admit a unique quantization. Here, a nilpotent orbit is {\em  rigid} if it is equal to a sheet; this definition coincides with the usual one in terms of Lusztig-Spaltenstein induction (see e.g. \cite{PS}).
\begin{Theorem}
\label{L:uniquequant}
  Suppose that $\O \subseteq \g^*$ is a rigid nilpotent orbit not lying in Table~\ref{ta: BadOrbs}, such that $\cO$ is normal. There is a unique $G$-stable two-sided ideal $J_\O \lhd U(\g)$ such that $\gr J_\O$ is the defining ideal of $\cO$. It is completely prime.
\end{Theorem}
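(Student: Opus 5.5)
We follow the strategy of Section~\ref{s:Quants}, replacing the $\GL_N$-specific inputs by the rigidity hypothesis and the exclusions of Table~\ref{ta: BadOrbs}. Fix $\chi\in\O$ and let $I\in\Idl^G_{\cO}U(\g)$ be arbitrary; set $\A=U(\g)/I$. The argument of Lemma~\ref{L:casimirspthpowers} uses only normality of $\cO$, so it gives $Z\A=Z_p\A$ with $\gr Z_p\A=\k[\cO]^p$. The proof of Proposition~\ref{P:psupport} (see Remark~\ref{R:whyGLN}(6)) then shows that $\Supp_p(\A)=\o{\O'}^{(1)}$ for an orbit $\O'$ lying in a sheet through $\O$. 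Since $\O$ is rigid, that sheet is $\O$ itself, so $\O'=\O$ and $\Supp_p(\A)=\cO^{(1)}$. Consequently we never need Katsylo's theorem or the Frobenius-neighbourhood version of Lemma~\ref{L:coneisGchi}. The $p$-support is exactly the nilpotent orbit, and only $\eta=\chi$ occurs.

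Next, $\Theta(I):=I+I_\chi U(\g)$ is primitive of codimension $p^{\dim\O}$, by Proposition~\ref{P:fibredimension} and Lemma~\ref{L:KW+Humphreys} exactly as in Lemma~\ref{L:IdltoPrim}. Because $I$ is $G$-stable, $\Theta(I)$ is $G^{\chi}$-stable, and being $\ad(\g)$-stable it is therefore $G^{F(\chi)}$-stable. Conversely, given $J\in(\Prim^{p^{\dim\O}}_\chi U(\g))^{G^{F(\chi)}}$, Lemma~\ref{L:actiondescends} is not needed because stability is assumed, and $\A_J:=\Ind^G_{G^{F(\chi)}}(U(\g)/J)$ is a quantization of $\k[\O]=\k[\cO]$. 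The proof of Theorem~\ref{T:grAI} goes through with $\eta=\chi$, so Step~2 is trivial, and Step~5 uses normality. Propositions~\ref{P:idltoidl} and \ref{P:primtoprim} apply verbatim and give a bijection
$$\Idl^G_{\cO}U(\g)\;\longleftrightarrow\;\big(\Prim^{p^{\dim\O}}_\chi U(\g)\big)^{G^{F(\chi)}}.$$
By Lemma~\ref{L:Walgebraproperties}(3), the right-hand side corresponds to the $G^{F(\chi)}$-invariant (equivalently $C(\chi)=G^\chi/(G^\chi)^\circ$-invariant) $1$-dimensional representations of $U(\g,e)$ whose $p$-character is exactly $\chi$. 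For the $W$-algebra these are the restricted $1$-dimensional modules of $U(\g,e)$ over the point $\chi$.

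The main obstacle is counting these, which amounts to showing that there is exactly one $C(\chi)$-invariant $1$-dimensional representation of $U(\g,e)$ with $p$-character $\chi$. First, $\E(\g,e)$ is finite: rigidity together with the Katsylo-type dimension count (Lemma~\ref{L:sliceproperties}(4) and the argument of Lemma~\ref{L:dogwobble}) shows that any small module lives over $\chi$. Moreover $\gr U(\g,e)^{\ab}$ is a quotient of $\k[\S_\chi]/(\{\k[\S_\chi],\k[\S_\chi]\})$, which is supported at $\chi$. I would try to reduce to characteristic zero. Over $\C$, for rigid orbits outside the analogue of Table~\ref{ta: BadOrbs}, Premet and Losev showed that $U(\g_\C,e)^{\ab}$ has a unique point, and it is fixed by the component group. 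Reducing a $\Z$-form modulo $p$ would then give a unique point of $\E(\g,e)$, with $U(\g,e)^{\ab}\cong\k$. I expect the table to consist exactly of the cases where this reduction fails, or where the characteristic-zero abelianisation is not a reduced point. If this works, uniqueness forces invariance under $C(\chi)$ and gives exactly one ideal $J_\O$.

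Complete primality then follows as usual: $\gr J_\O$ is the defining ideal of the irreducible variety $\cO$, so $\gr(U(\g)/J_\O)=\k[\cO]$ is a domain, and hence $U(\g)/J_\O$ is a domain.
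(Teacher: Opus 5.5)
Up to the bijection $\Idl^G_{\cO}U(\g)\leftrightarrow\big(\Prim_\chi^{p^{d_\chi}}U(\g)\big)^{G^{F(\chi)}}$, your argument is the paper's. Rigidity forces $\Supp_p(\A)=\cO^{(1)}$ via Proposition~\ref{P:psupport} and Remark~\ref{R:whyGLN}, so only $\eta=\chi$ occurs. The two maps are built as in Subsections~\ref{ss:quantisationtoprimitiveideals} and~\ref{ss:primidlstoquants}, with Step~2 of Theorem~\ref{T:grAI} trivial and normality giving surjectivity. Propositions~\ref{P:idltoidl} and~\ref{P:primtoprim} make the maps mutually inverse, and complete primality comes from $\gr$ being a domain. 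All of that is fine.

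The gap is the final count, which is exactly where the exclusion of Table~\ref{ta: BadOrbs} is used. You need that, for every very good $p$, $\Prim_\chi^{p^{d_\chi}}U(\g)$ contains exactly one $G^{F(\chi)}$-stable ideal. You do not prove this. You propose to transport $U(\g_\C,e)^{\ab}\cong\C$ through a $\Z$-form and write ``if this works''. Such a base-change argument cannot give the theorem as stated. A finitely generated $\Z$-form of $U(\g,e)^{\ab}$ whose generic fibre is a reduced point can still have larger fibres at finitely many primes, for instance through torsion. So you would only get $U(\g,e)^{\ab}\cong\k$ for $p$ outside an unspecified finite set, not for all very good $p$. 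Your reading of Table~\ref{ta: BadOrbs} as the cases where the reduction fails is also only a guess. Your observation that $\gr U(\g,e)^{\ab}$ is a quotient of $\k[\S_\chi]/(\{\k[\S_\chi],\k[\S_\chi]\})$ shows $\E(\g,e)$ is finite, but not that it is a single point.

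The paper closes this step by citing \cite[Remark~5.5]{PT20}: under precisely these hypotheses, $\Prim_\chi^{p^{d_\chi}}U(\g)$ consists of a single ideal. That ideal is then automatically $G^{F(\chi)}$-stable, which gives existence and uniqueness at once.

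If you want a self-contained argument uniform in $p$, you need a genuinely characteristic-$p$ input that collapses your bound to $\dim U(\g,e)^{\ab}\le 1$. One such input would be $\g^e=[\g^e,\g^e]$. The linear part of the bracket on $\k[\S_\chi]$ is the Lie bracket of $\g^e$, so graded Nakayama would then force the bracket ideal to be the augmentation ideal. You would combine this with existence from Lemma~\ref{L:KW+Humphreys}.
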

\begin{proof}
    Let $\chi \in \O$ and write $d_\chi = \dim G\cdot \chi$. Maintaining notation from Remark~\ref{R:whyGLN}, we show that there is a bijection
    \begin{eqnarray}
        \label{eq:ogsog}
        \left(\Prim_\chi^{p^{d_\chi}}U(\g)\right)^{G^{F(\chi)}} \to \Idl_{\cO}^G U(\g).
    \end{eqnarray}
    Let $I\in \Prim_\chi^{p^{d_\chi}} U(\g)$ be $G^{F(\chi)}$-stable and define $\A_I$ as per \eqref{eq:defineAI}. The proof of Theorem~\ref{T:grAI} shows that $\gr \A_I = \k[\o\O]$ with respect to the Kazhdan grading (note that Step 2 of that proof is trivial when $\eta=\chi$).  As in Lemma~\ref{L:twistingfiltrations} we may twist the filtration such that $\gr \A_I = \k[\o\O]$ with respect to the PBW grading. Since $\cO$ is normal, we have $\k[\g^*] \onto \k[\O] = \k[\o\O]$, and so $U(\g) \onto \A_I$ with kernel in $\Idl_{\cO}^G U(\g)$.

    Now suppose $(\cA,\iota,\Phi)\in\Quant^G(\k[\O])$. By Proposition~\ref{P:psupport} (see also Remark~\ref{R:whyGLN}) we know that $\Supp_p(\A)$ is contained in the union of closures of sheets containing $\O$. Since $\O$ is rigid, this union of closures equals $\o\O$, and by \cite[Lemma 5.10]{BDT} $\chi+X=\{\chi\}$. Proposition~\ref{P:fibredimension} then shows that  the kernel $K$ of $U(\g) \to \A/\Phi(I_\chi)\cA$ has codimension $p^{d_\chi}$ and contains $I_\chi$. Therefore, $K$ is the annihilator of a simple module of dimension $p^{\frac{1}{2}d_\chi}$, by Premet's theorem \eqref{eq:Premetstheorem}. It is clearly $G^{F(\chi)}$-stable.

    The two previous paragraphs show that we have a map \eqref{eq:ogsog} and a map in the opposite direction. They are inverse to each other, thanks to Propositions~\ref{P:idltoidl} and \ref{P:primtoprim}. 

    Under the stated assumptions, $\Prim_\chi^{p^{d_\chi}} U(\g)$ consists of a single ideal \cite[Remark 5.5]{PT20}; such ideal must be $G^{F(\chi)}$-stable.  We have thus shown existence and uniqueness of $J_\O$. That $J_\O$ is completely prime follows from the fact that $\gr \A_I$ is a domain.

\end{proof}

	\renewcommand{\arraystretch}{1.5}
	\begin{center}
    \begin{table}[h] \caption{Exceptions to Theorem~\ref{L:uniquequant}.}\label{ta: BadOrbs}
		\begin{tabular}{|c||c|c|c|c|c|c|}
        \hline
			Type of $G$ & $\sG$ & $\sF$ & $\sEj$ & $\sEk$ & $\sEk$ & $\sEk$ \\
			\hline
			Bala-Carter label & $\widetilde{\sA}_{\mathsf 1}$ & $\widetilde{\sA}_{\mathsf 2}+\sAa$ & $(\sAc+\sAa)'$ & $\sAc+\sAa$ & $\sAe+\sAa$ & $\sDe(\saa)+\sAb$ \\
            \hline
		\end{tabular}
        \end{table}
	\end{center}
\begin{Remark}
    Over $\C$, the six rigid nilpotent orbits in Table~\ref{ta: BadOrbs} do not have normal closure in $\g^*$ \cite{Br1,Br2,Kr}. We expect this to remain true in good characteristic, but it appears that this is not currently known.     
\end{Remark}

\begin{Corollary}\label{C:UniqMinQuant}
    If $G$ does not have type $\sA$ then there is a unique $G$-stable ideal $J_{\O}\lhd U(\g)$ such that $\gr J_{\O} = \k[\o\O_{\min}]$. It is completely prime, but not maximal amongst $G$-stable ideals. $\hfill \qed$
\end{Corollary}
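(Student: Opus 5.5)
The corollary will follow by applying Theorem~\ref{L:uniquequant} to $\O = \O_{\min}$. Afterwards we will check separately that the resulting ideal is not maximal among $G$-stable ideals.

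To apply the theorem, three hypotheses on $\O_{\min}$ must be verified when $G$ is not of type $\sA$.
\begin{enumerate}
\item The closure $\o\O_{\min}$ is normal. This is recorded above from \cite[\textsection 8.6]{JaNO}.
\item $\O_{\min}$ is not one of the orbits in Table~\ref{ta: BadOrbs}. This is immediate, since its Bala--Carter label is $\sAa$ (long roots), which does not appear in the table.
\item $\O_{\min}$ is rigid, i.e.\ it is itself a sheet.
\end{enumerate}

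For rigidity I would argue as follows. A non-rigid nilpotent orbit lies in the closure of a sheet of strictly larger dimension than the orbit stratum it meets. Equivalently, it is induced from some $0$-orbit or nilpotent orbit of a proper Levi $\l$, so that
\[
\dim \O = \dim \O_{\l} + 2\dim(\g/\p).
\]
The smallest such induced orbits are the Richardson orbits of minimal parabolics $\p$. Their dimension is $2\dim(\g/\p) \ge 2\dim\u$, where $\u$ is the nilradical of a maximal parabolic with the smallest nilradical. Outside type $\sA$ one checks case by case (using the Bala--Carter/partition descriptions, which are the same in very good characteristic by \cite{PS}) that this exceeds $\dim \O_{\min} = 2h^\vee - 2$. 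For instance, in type $\sG$ we have $\dim \O_{\min} = 6$, while the smallest Richardson orbit has dimension $10$. Hence $\O_{\min}$ is not induced, and so it is rigid; over $\C$ this is the classical fact. This case check is the step I expect to require the most care. It should be quotable from \cite{PS}, where sheets and rigid orbits in good characteristic are shown to match the characteristic-zero classification.

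Theorem~\ref{L:uniquequant} then gives existence and uniqueness of $J_\O$ together with complete primality. For the final claim I would exhibit a strictly larger proper $G$-stable ideal. By the proof of Theorem~\ref{L:uniquequant}, $J_\O = \Omega(I)$ for the unique $I \in \Prim_\chi^{p^{d_\chi}}U(\g)$, and $J_\O \subseteq I$. Then $J_\O + I_\chi U(\g)$ contains $J_\O$ and is still proper, since it is contained in $I$. It is also $G$-stable, because $\O_{\min}$ is nilpotent and the $p$-support is $\o\O^{(1)}$. More directly, one can use the $p$-centre: the $G$-stable ideal $I_0 \lhd Z_p(\g)$ of $\{0\}^{(1)}$ generates $J_\O + I_0U(\g)$. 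This ideal contains $J_\O$ strictly because $\Supp_p(U(\g)/J_\O) = \o\O^{(1)} \ne \{0\}$. It is proper because $0 \in \o\O_{\min}^{(1)}$, and by Lemma~\ref{L:whatissupport} we have $\Phi(I_0)\A \neq \A$.
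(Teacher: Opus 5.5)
Your proposal is correct and follows the paper's route. The corollary is Theorem~\ref{L:uniquequant} applied to $\O_{\min}$: it is normal, it is rigid outside type $\sA$, and its label $\sAa$ is not among the exceptions. Non-maximality comes from the chain $J_\O\subsetneq I_0U(\g)+J_\O\subsetneq U(\g)$, which is exactly the argument in the remark following the table of exceptions. Your dimension check for rigidity is a reasonable justification of a fact the paper takes as known, once ``minimal parabolics'' is corrected to ``maximal parabolics''. As a consistency check, the strict inequality $2\min_{\mathfrak p}\dim\mathfrak u_{\mathfrak p}>\dim\O_{\min}$ becomes an equality in type $\sA$, where $\O_{\min}$ is Richardson.

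You should, however, delete your first candidate ideal for non-maximality, because its claimed $G$-stability is false. The ideal $J_\O+I_\chi U(\g)$ is exactly $\Theta(J_\O)=I$, the primitive ideal with $I\cap Z_p(\g)=I_\chi$. For $g\in G$ one has $g\cdot I_\chi=I_{g\cdot\chi}$. So if $I$ were $G$-stable, the maximal ideal $I\cap Z_p(\g)$ would be $G$-stable, which forces $\chi$ to be $G$-fixed, i.e.\ $\chi=0$. That ideal is only $G^{F(\chi)}$-stable, so it shows only that $J_\O$ is not a maximal two-sided ideal. The $I_0$ argument works because $0$ is $G$-fixed, which is what makes $I_0U(\g)+J_\O$ $G$-stable. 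That is the argument that actually proves the claim.
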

    
\begin{Remark}
    It is interesting that the ideal $I$ in Theorem~\ref{L:uniquequant} is not maximal among $G$-stable ideals of $U(\g)$ (Cf. (ii) above). Let $\A := U(\g) / I$. Since $\O \subseteq \Supp_p(\A)$ and $\Supp_p(\A)$ is closed, it contains zero. Therefore, writing $I_0 = (x^p - x^{[p]} \mid x\in \g) \subseteq Z_p(\g)$ yields $\A_0 := \A/I_0\A \ne 0$. Thus we have $G$-stable ideals
    $$J_\O \subsetneq I_0U(\g) + J_\O \subsetneq U(\g) $$
\end{Remark}

\begin{Remark}
    Theorem~\ref{L:uniquequant} does not apply in type $\sA$ since $\O_{\min}$ is not rigid in that case. The set $\Idl_{\cO_{\min}}^G U(\g)$ is however completely understood in type $\sA$ due to Theorem~\ref{T:MainThm}; in particular, for $G=\SL_N$ with $p\nmid N$ and $N> 2$ it may be identified with $\mathbb{A}_\k^1$.
\end{Remark}

\end{document}